\pgfplotsset{width=10cm,compat=1.9}
\crefname{hypothesis}{Hypothesis}{Hypotheses}
\title{A Semi-Analytic Diagonalization FEM for the Spectral Fractional Laplacian}
\author{Abner J. Salgado\thanks{Department of Mathematics, University of Tennessee, Knoxville
(\email{asalgad1@utk.edu}, \url{https://math.utk.edu/people/abner-j-salgado/}).}
\and Shane E. Sawyer\thanks{Department of Mathematics, University of Tennessee, Knoxville
(\email{sjw355@vols.utk.edu} ).}
}
\renewcommand{\imath}{{\mathfrak i}}
\begin{document}

\maketitle

\begin{abstract}
We present a technique for approximating solutions to the spectral fractional Laplacian, which is based on the Caffarelli-Silvestre extension and diagonalization.
Our scheme uses the analytic solution to the associated
eigenvalue problem in the extended dimension.
We show its relation to a quadrature scheme.
Numerical examples demonstrate the performance of the method.
\end{abstract}

\begin{keywords}
Fractional Diffusion, Nonlocality, Finite Elements, Exponential Convergence.
\end{keywords}

\begin{MSCcodes}
26A33, 65N12, 65N30, 33C10, 41A55.
\end{MSCcodes}

\section{Introduction}
\label{sec:intro}

Nonlocal and fractional order operators are of increasing interest in a variety of fields of study.
Application areas include image processing and denoising \cite{gatto2015numerical},
electroconvection \cite{bonito2020electroconvection},
quasi-geostrophic flow \cite{bonito2021numerical},
cardiac electrophysiology \cite{cusimano2018space, cusimano2021space},
and anomalous diffusion \cite{metzler2000}, to name but a very few of them.

In this work, we are interested in problems involving fractional powers of the Dirichlet Laplacian $(-\Delta)^s$,
with $s\in (0,1)$. Let the domain $\Omega$ be a convex, open, and bounded subset of $\mathbb{R}^d$, $d\geq 1$,
with Lipschitz boundary $\partial \Omega$, and our problem of interest is stated as: given $s\in (0,1)$ and a sufficiently smooth
$f$, find $u$ such that
\begin{equation} \label{eq:pde-problem}
    (-\Delta)^s u = f \qquad   \text{in } \Omega \, . \\
\end{equation}
The meaning of this operator will be specified in \Cref{sec:prelim}. At this stage it suffices to say that
the fractional Laplacian is a nonlocal operator, making it difficult to solve numerically. It has been shown by Caffarelli and
Silvestre \cite{caffarelli2007extension} that the solution of \eqref{eq:pde-problem} in $\mathbb{R}^d$ can be found by extending
the problem to the upper half space $\mathbb{R}_+^{d+1}$ via a Dirichlet-to-Neumann map. This extension method 
has been adapted to bounded domains, $\Omega \subset \mathbb{R}^d$, in \cite{cabre2010positive, stinga2010extension}.
This poses the extension problem on the semi-infinite cylinder $\mathcal{C} = \Omega \times (0,\infty)$.
The problem now reads as follows: find $\mathcal{U}$ such that
\begin{equation} \label{eq:extended-problem}
    \text{div } (y^\alpha \nabla \mathcal{U} ) = 0\,   \text{in } \mathcal{C} \, , \qquad
    \mathcal{U} = 0 \,   \text{on } \partial_L \mathcal{C} \, , \qquad
    \frac{\partial \mathcal{U}}{\partial \nu^\alpha} = d_s f \,   \text{on } \Omega \times \{ 0 \} \, ,
\end{equation}
where $\partial_L \mathcal{C} =\partial \Omega \times [0,\infty)$ is the lateral boundary of the cylinder,
and
\begin{equation*} \label{eq:conormal-derivative}
  \frac{\partial \mathcal{U}}{\partial \nu^\alpha} = - \lim_{y \downarrow 0} y^\alpha \mathcal{U}_y
\end{equation*}
denotes the conormal derivative of $\mathcal{U}$. Here, $\nu$ is the unit outer normal vector
to $\partial\mathcal{C}$ along $\Omega \times \{0\}$. The parameter $\alpha$ is related to $s$ by
\begin{equation} \label{eq:define-alpha}
  \alpha = 1 -2s \in (-1,1) \, .
\end{equation}
Lastly, $d_s$ is a positive normalization constant that depends only on the power $s$ via
\begin{equation} \label{eq:define-ds}
  d_s = \frac{2^{1-2s} \Gamma (1-s)}{\Gamma (s)} \, .
\end{equation}
As seen in \cite{caffarelli2007extension,cabre2010positive,stinga2010extension}, the solution to 
\eqref{eq:pde-problem} is related to 
\eqref{eq:extended-problem} by
\begin{equation*} \label{eq:frac-lap-to-dtn}
  d_s (-\Delta)^s u = \frac{ \partial \mathcal{U} }{\partial \nu^\alpha} \qquad \text{in } \Omega \, .
\end{equation*}
Numerical schemes have been introduced to approximate solutions of \eqref{eq:pde-problem} by computing solutions of \eqref{eq:extended-problem}. A piecewise linear FEM was introduced and analyzed in
\cite{nochetto2015pde}. This was improved in many ways in \cite{banjai2019tensor}.
The improvements include a tensor product formulation with \textit{hp}-FEM in the extended variable $y$ coupled
to $P_1$-FEM in $\Omega$. A diagonalization technique was also introduced that allows the Caffarelli-Silvestre
extension to decouple into independent, singularly perturbed second-order reaction-diffusion problems in $\Omega$.
Lastly, the use of \textit{hp}-FEM in $\Omega$, along with certain regularity assumptions, provided for exponential convergence
to $u$ in $\Omega$.

In this work we build on \cite{banjai2019tensor} and replace the \textit{hp}-FEM
scheme in the extended variable with the analytical solution to the one dimensional second-order eigenvalue problem induced by the diagonalization
technique. We call the resulting scheme an \textit{exact diagonalization technique} to distinguish it from earlier approaches.
By using the analytic solution, the instability inherent in the numerical approximation to a large eigenvalue problem
\cite{zhang2015many} can be mitigated. For the analysis of this method, we show that this technique is related to similar work
in \cite{bonito2015numerical} which uses a discretization of the so-called Balakrishnan formula for fractional powers of positive operators \cite{MR0115096,MR1192782}.

Before we move along, let us mention some related approaches to the numerical solution to \eqref{eq:pde-problem}. For a more thorough overview,
we refer the reader to \cite{bonito2018numerical,MR4189291,MR4132117,MR4043885}.
Reference \cite{vabishchevich2015numerically} studies the numerical solution to problems with elliptic operators with a fractional power.
In order to apply the method proposed in \cite{vabishchevich2015numerically} to problem \eqref{eq:pde-problem} we first observe that, if we set $A  = -\Delta$, we have that
$A=A^* \geq \delta I$, where $\delta > 0$. The problem at hand is then transformed into a pseudo-parabolic problem
\begin{equation*}
\big(t(A - \delta I)+\delta I\big) \frac{dw}{dt} + s (A -\delta I) w = 0 \, .
\end{equation*}
with $w(0) = \delta^{-s} f$. The solution of this problem is related to our problem by
$u = w(1)$. Reference \cite{MR4122489} provides further insight and an analysis of this method.

Another class of approaches uses the idea of rational approximation. In \cite{aceto2017rational} this is applied to a fractional-in-space reaction-diffusion equation
\begin{equation*}
 \frac{\partial u(x,t)}{\partial t} = - \kappa_{2s} (-\Delta)^s u(x,t) + f(x,t,u) \, \quad x \in \Omega \, , \, t\in (0,T) \, ,
\end{equation*}
where $\kappa_{2s}$ is the diffusion coefficient.
The equation is subject to the boundary condition $u(x,t) |_{x \in \partial \Omega}=0$ and initial condition $u(x,0)=u_0(x)$.
Note that in this work $s$ is restricted to $(1/2,1]$. The first step in this method is to discretize the Laplacian by a finite
difference method over a uniform mesh resulting in the matrix approximation $h^{-2}L$. Then, the scheme applies the fractional power $s$
to the matrix $L$ obtaining
\begin{equation*}
 (-\Delta)^s \approx  \frac{1}{h^{2s}} L^s \approx \frac{1}{h^{2s}} M^{-1} K \, ,
\end{equation*}
where $M$ and $K$ are banded matrices that are found by a rational approximation of $z^{s-1}$ using the Gauss-Jacobi rule applied to the Dunford-Taylor integral
representation of $L^s$ \cite{aceto2017rational}.
A similar approach, but one using a best uniform rational approximation (BURA) is proposed in
\cite{harizanov2018optimal, harizanov2020analysis}. These works start with a finite element discretization of an elliptic operator, $\hat{\mathcal{A}}$,
and consider solving the algebraic problem
$\mathcal{A}^s \mathbf{u} = \mathbf{f}$,
where $\mathcal{A}$ is a rescaling of $\hat{\mathcal{A}}$ so that the spectrum lies in $(0,1]$. The authors of \cite{harizanov2018optimal}
then determine the BURA to $z^{1-s}$ on the interval $(0,1]$ using a modified Remez algorithm.

A scheme involving a quadrature formula for the Dunford-Taylor integral representation
of the fractional Laplacian is developed in \cite{bonito2015numerical}. In particular, it is observed that the Dunford-Taylor integral is equivalent to the Balakrishnan integral
\begin{equation*}
 (-\Delta)^{-s} = \frac{\sin(s\pi)}{\pi} \int_0^\infty t^{-s} (t \mathrm{I} - \Delta)^{-1} dt \, .
\end{equation*}
To approximate the solution $(t_i \mathrm{I} - \Delta)^{-1} f $ is computed via a FEM at the quadrature nodes $t_i$.
Using a change of variables, \cite{bonito2015numerical} shows that the quadrature converges exponentially fast with respect to the number
of quadrature nodes. Subsequent work extended the approach to more general operators in \cite{bonito2017numerical}.
The following error estimate was proved for convex domains $\Omega$ \cite{bonito2015numerical, bonito2018numerical}:
\begin{equation*}
 \| u - u_h^k \|_{L^2(\Omega)} \lesssim h^2 \| f \|_{\mathbb{H}^{2-2s}(\Omega)} \, ,
\end{equation*}
where $u$ is the solution to \eqref{eq:pde-problem} and $u_h^k$ represents its numerical approximation.

Other approaches have been proposed that utilize the Caffarelli-Silvestre extension but with different tactics than those in
\cite{nochetto2015pde, banjai2019tensor} or in this work.
A hybrid FE-spectral method to solve the extended problem 
is proposed in \cite{ainsworth2018frac}.
Their approach uses the first $M$ eigenvalues of the Laplacian. The lower part of the spectrum is approximated by a FEM on $\Omega$
and the upper part of the spectrum is approximated using Weyl's asymptotic \cite{ivrii2016100}. For convenience let us denote by $h$ the mesh size
of the finite element space $\mathbb{V}_h$, defined in $\Omega$, $k$ the polynomial order of the FEM in $\mathbb{V}_h$, $r>-s$ the regularity of $f$,
and $\mathcal{U}_h^M$ the discrete solution. Under the assumption that $M$ is large enough so that $\lambda_M^{-(r+s)/2} \sim h^{\min \{k, r+s\}}$,
the following error estimate is provided
\cite[Theorem 2]{ainsworth2018frac}:
\begin{equation*}
 \| \nabla ( \mathcal{U} - \mathcal{U}_{h}^M )\|_{L^2 ( y^\alpha, \mathcal{C}_{\mathcal{Y}} ) } \\
   \lesssim h^{\min \{ k, r+s \}} \sqrt{ |\log h| } \,\, \| f \|_{\mathbb{H}^{r}(\Omega)} \, .
\end{equation*}
If $M \sim \mathcal{O}(\log^p \text{dim} \mathbb{V}_h)$ for some $p\geq 0$, then
\cite[Equation (28)]{ainsworth2018frac}, for some $q\geq 0$,
\begin{equation*}
  \| \nabla ( \mathcal{U} - \mathcal{U}_h^M )\|_{L^2 ( y^\alpha, \mathcal{C}_{\mathcal{Y}} ) } \\
  \lesssim | \log ( M \cdot \text{dim}\mathbb{V}_h ) |^q \big( M \cdot \text{dim} \mathbb{V}_h \big)^{\frac{-\min \{ k, r+s \}}{d}} \| f \|_{\mathbb{H}^{r}(\Omega)} \, .
\end{equation*}

Reference \cite{chen2020efficient}
uses generalized Laguerre functions as basis for the extended dimension. Since
the weight $y^\alpha$ appears in the problem, the use of Laguerre functions is natural. The scheme is called an enriched Laguerre spectral method:
the basis in the extended dimension is supplemented with additional functions to compensate for the singular/degenerate weight.
Denoting by $N$ the number of generalized Laguerre basis functions and by $k$ the number of additional functions used to enrich the Laguerre basis, the following error estimate is proved in \cite{chen2020efficient}:
\begin{equation*}
  \begin{aligned}
 \| \nabla ( \mathcal{U} - \mathcal{U}_h )\|_{L^2 ( y^\alpha, \mathcal{C}_{\mathcal{Y}} ) } &\lesssim N^{-\frac{m}2} \| f \|_{\mathbb{H}^{m/2-s}(\Omega)} + N^{1-\frac{m}2} \| f\|_{\mathbb{H}^{(m-1)/2-s}(\Omega)} \\
 &+ h^r \| f \|_{\mathbb{H}^{r+1/2-s}(\Omega)} + h^{r-1}\| f \|_{\mathbb{H}^{r-1-s}(\Omega)} \, ,
 \end{aligned}
\end{equation*}
where $r>-s$, $f \in \mathbb{H}^{l-s}(\Omega)$ with $l=\max\{\tfrac{m}2,r+\tfrac12\}$, and $m=2k+1+[2s]$.

Finally, \cite{hofreither2020unified} showed that all the schemes in
\cite{vabishchevich2015numerically,aceto2017rational,harizanov2018optimal,bonito2015numerical,nochetto2015pde,banjai2019tensor}
can be understood as methods based on rational approximation of a univariate function.

We now briefly summarize the convergence results from the previous work in \cite{nochetto2015pde,banjai2019tensor} that we build upon here.
As before, we will denote the discrete solution by $\mathcal{U}_h$. Recall, $\mathbb{V}_h$ denotes the finite element space. By $\mathcal{S}_{\mathcal{Y}}$
we denote the finite element space in the extended direction. For the following estimates, $\mathbb{V}_h$ will consist of piecewise linear functions
on a conforming and quasi-uniform mesh of quadrilaterals on $\Omega$.
If $\mathcal{S}_{\mathcal{Y}}$ is taken to be piecewise linear functions on quasi-uniform intervals, the following error estimate holds
\cite[Theorem 5.1]{nochetto2015pde}:
\begin{equation*}
 \label{eq-error-uniform}
 \| \nabla ( \mathcal{U} - \mathcal{U}_h )\|_{L^2 ( y^\alpha, \mathcal{C}_{\mathcal{Y}} ) } \\
   \lesssim h^{s-\epsilon} \| f \|_{\mathbb{H}^{1-s}(\Omega)}
   \sim \big( \text{dim}\mathbb{V}_h \cdot \text{dim} \mathcal{S}_{\mathcal{Y}} \big)^{- \frac{s-\epsilon}{d+1} } \| f \|_{\mathbb{H}^{1-s}(\Omega)} \, .
\end{equation*}
If instead $\mathcal{S}_{\mathcal{Y}}$ consists of piecewise linear functions on intervals that are graded towards $y=0$, the error estimate
can be improved as shown in \cite[Equation (5.11)]{nochetto2015pde} to
\begin{equation*}
 \label{eq:error-graded}
 \| \nabla ( \mathcal{U} - \mathcal{U}_h )\|_{L^2 ( y^\alpha, \mathcal{C}_{\mathcal{Y}} ) } \\
   \lesssim | \log ( \text{dim}\mathbb{V}_h \cdot \text{dim} \mathcal{S}_{\mathcal{Y}} ) |^s
            \big( \text{dim}\mathbb{V}_h \cdot \text{dim} \mathcal{S}_{\mathcal{Y}} \big)^{-\frac{1}{d+1}}
            \| f \|_{\mathbb{H}^{1-s}(\Omega)} \, .
\end{equation*}
Lastly, in \cite{banjai2019tensor}, $\mathcal{S}_{\mathcal{Y}}$ uses \textit{hp}-FEM on intervals with a geometric grading towards $y=0$. Choosing $q>0$ the error estimate is given from \cite[Equation (5.57)]{banjai2019tensor} as
\begin{equation*}
 \label{eq:error-hp}
 \| \nabla ( \mathcal{U} - \mathcal{U}_h )\|_{L^2 ( y^\alpha, \mathcal{C}_{\mathcal{Y}} ) }
   \lesssim | \log ( \text{dim}\mathbb{V}_h ) |^q \big( \text{dim} \mathbb{V}_h \big)^{-\frac{1}{d}} \| f \|_{\mathbb{H}^{1-s}(\Omega)} \, .
\end{equation*}

Having reviewed the existing methods and their analysis we can proceed with the development of our semi-analytic technique.
Our presentation shall be organized as follows. In \Cref{sec:prelim}, we discuss the notation and preliminary results needed to study
the problem. In \Cref{sec:diagonalization}, we discuss the diagonalization technique and the analytical solution
to the eigenvalue problem. \Cref{sec:quadrature-error} contains the analysis of our method. We draw a connection between
the exact diagonalization scheme and quadrature of the Balakrishnan formula for the inverse of the fractional Laplacian operator
along with error estimates. Lastly, in \Cref{sec:results}, numerical results are presented to demonstrate the convergence of the scheme
as well as the parallel performance of the algorithm.

\section{Preliminaries and Notation}
\label{sec:prelim}

Throughout our work we assume that $\Omega$ is a convex, open, bounded, and connected subset of $\mathbb{R}^d$, $d \geq 1$.
We will assume that $\partial \Omega$ is polyhedral. We define the semi-infinite cylinder as $\mathcal{C} = \Omega \times (0, \infty)$,
along with its lateral boundary $\partial_L \mathcal{C} = \partial \Omega \times [0, \infty)$. We similarly define, for any $\mathcal{Y} > 0$,
the truncated cylinder $\mathcal{C}_{\mathcal{Y}} = \Omega \times (0 , \mathcal{Y})$ and its corresponding lateral boundary
$ \partial_L \mathcal{C}_{\mathcal{Y}} = \partial \Omega \times [0,\mathcal{Y}] $.
It will also be prudent to distinguish between points in $\mathbb{R}^d$ and those in $\mathbb{R}^{d+1}_+$.
We set $\mathbf{x}=(x,y)$ where $x \in \mathbb{R}^d$ and $y \in (0,\infty)$.
We use $a \lesssim b$ to indicate that $a \leq C b$, where the constant $C$ is independent of either $a$ or $b$ and does not
depend on the discretization scheme. By $a \sim b$, we mean that $a \lesssim b \lesssim a$, where the constants on either end
of the inequalities are generally different from one another.
We use standard Sobolev spaces $H^1(\Omega)$, $H_0^1(\Omega)$, and $H^\sigma (\Omega)$, with $\sigma \in \mathbb{R}$
\cite{adams2003sobolev, di2012hitchhikers}.

\subsection{The Spectral Fractional Laplacian}
\label{subsec:frac-laplace-op}
In this work we use the spectral interpretation of $(-\Delta)^s$ which we briefly summarize below.
Define
\[
 (-\Delta):L^2(\Omega) \to L^2(\Omega) \, , \qquad  \text{Dom } (-\Delta)= \{ w \in H_0^1(\Omega): \Delta w \in L^2(\Omega) \}.
\]
This operator is positive, unbounded, closed, and possesses a compact inverse. Hence, the spectrum of $(-\Delta)$ is discrete, positive,
and accumulates at infinity.

In other words, we have the existence of $\{ (\lambda_k , \phi_k) \}_{k \geq 1} \in \mathbb{R}_+ \times H_0^1(\Omega)\setminus \{0\}$, where
\begin{equation*}
 \label{eq:laplace-eigenpair}
 -\Delta \phi_k = \lambda_k \phi_k \,  \quad \text{in } \Omega \quad \text{and}  \quad \phi_k = 0 \,  \quad \text{on } \partial \Omega \, .
\end{equation*}
In addition, $\lambda_1 = \min\{\lambda_k \}_{k \geq 1}$ is simple and $\lambda_k \to \infty$ as $k \to \infty$. Moreover, $\{ \phi_k \}_{k\geq 1}$ forms an orthonormal basis of $L^2(\Omega)$ \cite[Section 6.5, Theorem 1]{evans2022partial}. Finally, $\{ \phi_k \}_{k \geq 1}$ forms an orthogonal basis of $H_0^1(\Omega)$ with
$\| \nabla \phi_k \|_{L^2(\Omega)}^2 = \lambda_k$.

We can now define fractional powers of the Dirichlet Laplacian using this spectral decomposition. For any $w \in C_0^\infty(\Omega)$ we set $w_k = \int_\Omega w \phi_k \, dx$ and
\begin{equation}
 \label{eq:spectral-fractional-laplacian}
 (-\Delta)^s w = \sum_{k=1}^\infty w_k \lambda_k^s \phi_k \, .
\end{equation}

Suppose now that we wish to solve \eqref{eq:pde-problem} and $f = \sum_{k\geq 1} f_k \phi_k$. Owing to the orthonormality of
$\{\phi_k\}$, if we define $u_k = \lambda_k^{-s} f_k$ for all $k \in \mathbb{N}$, then the function $u = \sum_{k \geq 1} u_k \phi_k$ solves \eqref{eq:pde-problem}, provided all these series converge in a suitable sense. This is made rigorous by introducing, for $r \geq 0$, the following Hilbert space:
\begin{equation*}
 \label{eq:hs-bb}
 \mathbb{H}^r(\Omega) = \left\{ w = \sum_{k=1}^\infty w_k \phi_k \in L^2(\Omega) : \| w \|_{\mathbb{H}^r(\Omega)}^2
   \coloneqq \sum_{k=1}^\infty \lambda_k^r |w_k|^2 < \infty \right\}.
\end{equation*}
For $r<0$ the space $\mathbb{H}^r(\Omega)$ is the dual of $\mathbb{H}^{-r}(\Omega)$. Further characterizations of these spaces can be seen in \cite[Equation (2.13)]{nochetto2015pde}.

We now can give a meaning to \eqref{eq:pde-problem}. Namely, for $s \in (0,1)$, the operator $(-\Delta)^s : \mathbb{H}^s(\Omega) \to \mathbb{H}^{-s}(\Omega)$ is an isometry. For this reason, if $f = \sum_{k \geq 1} f_k \phi_k \in \mathbb{H}^{-s}(\Omega)$, the solution to \eqref{eq:pde-problem} satisfies $\| u \|_{\mathbb{H}^s(\Omega)} = \| f \|_{\mathbb{H}^{-s}(\Omega)}$ and is given by
\begin{equation}
\label{eq:SolFracLapSeriesSense}
  u = \sum_{k\geq1} \lambda_k^{-s} f_k \phi_k \in \mathbb{H}^s(\Omega) .
\end{equation}

\subsection{The Caffarelli-Silvestre Extension}
\label{subsec:c-s-ext}
The Caffarelli-Silvestre extension \cite{caffarelli2007extension} necessitates the handling of degenerate/singular elliptic
equations on the space $\mathbb{R}^{d+1}_+=\mathbb{R}^d\times \mathbb{R}_+$, or in the case of a bounded domain, the extension
\cite{cabre2010positive,stinga2010extension} requires handling such equations on $\mathcal{C}$.
To study such equations weighted Sobolev spaces \cite{kufner1984define} are used with the weight $y^\alpha$, where
$\alpha$ is defined in \eqref{eq:define-alpha}.

We start with some domain $\mathcal{D} \subset \mathbb{R}^{d+1}_+$ and define $L^2 (y^\alpha, \mathcal{D} )$ as the space
containing all measurable functions $w$ defined on $\mathcal{D}$ such that
\begin{equation*}
 \label{eq:weighted-L2}
 \| w \|_{L^2(y^\alpha, \mathcal{D})}^2 \coloneqq \int_{\mathcal{D}} |y|^\alpha w(\mathbf{x})^2 \, d \mathbf{x} < \infty \, .
\end{equation*}
Using this definition, we can similarly define the weighted Sobolev space
\begin{equation*}
 \label{eq:weighted-sobolev}
 H^1 ( y^\alpha, \mathcal{D} ) = \left\{ w \in L^2(y^\alpha, \mathcal{D}) : |\nabla w| \in L^2(y^\alpha, \mathcal{D} ) \right\} \, ,
\end{equation*}
where $ \nabla w$ are distributional derivatives. The space is equipped with the norm:
\begin{equation*}
 \label{eq:weighted-sobolev-norm}
 \| w \|_{H^1(y^\alpha, \mathcal{D})}^2 = \| w \|_{L^2( y^\alpha, \mathcal{D})}^2 + \| \nabla w \|_{L^2( y^\alpha, \mathcal{D})}^2.
\end{equation*}
The Sobolev space, defined on $\mathcal{C}$, of functions that vanish at the lateral boundary is
\begin{equation*}
 \label{eq:h10-circle}
 \mathring{H}_L^1 (y^\alpha, \mathcal{C}) = \left\{ w \in H^1(y^\alpha, \mathcal{C}) : w|_{\partial_L \mathcal{C}} = 0 \right\} \, .
\end{equation*}
We note that this space has a weighted Poincar\'{e} inequality \cite[(2.21)]{banjai2019tensor} and as a result the
$\mathring{H}_L^1(y^\alpha,\mathcal{C})$ semi-norm is an equivalent norm on this space.
Finally, we comment that functions in $\mathring{H}_L^1 (y^\alpha, \mathcal{C})$ have well defined traces in $\Omega \times \{0\}$. By this we mean that there is a continuous
and surjective mapping $\text{tr } : \mathring{H}_L^1 (y^\alpha, \mathcal{C}) \to \mathbb{H}^s(\Omega)$ such that, according to \cite[Proposition 2.5]{nochetto2015pde}, for every smooth $w \in \mathring{H}_L^1 (y^\alpha, \mathcal{C})$
\[
  \text{tr } w(x) = w(x,0), \qquad \forall x \in \Omega \, .
\]

We define the, bounded and coercive, bilinear form
\begin{equation*}
 \label{eq:bilinear-form-cylinder}
a_{\mathcal{C}}: \mathring{H}_L^1(y^\alpha,\mathcal{C}) \times \mathring{H}_L^1(y^\alpha,\mathcal{C}) \to \mathbb{R}
\qquad 
 a_{\mathcal{C}} (v,w) = \int_{\mathcal{C}} y^\alpha \nabla v \cdot \nabla w  \, d \mathbf{x} \, .
\end{equation*}
The weak formulation of \eqref{eq:extended-problem} is:
find $\mathcal{U}\in \mathring{H}_L^1(y^\alpha,\mathcal{C})$ such that
\begin{equation}
 \label{eq:cs-weak-form}
 a_{\mathcal{C}}( \mathcal{U}, v ) = d_s \langle f , \text{tr } v \rangle \, , \quad \forall \, v \in \mathring{H}_L^1(y^\alpha, \mathcal{C}) \, ,
\end{equation}
where $\langle \cdot, \cdot \rangle$ is the duality pairing between $\mathbb{H}^s(\Omega)$ and $\mathbb{H}^{-s}(\Omega)$.

The main result of Caffarelli and Silvestre can now be given.

\begin{theorem}[extension]
Let $f \in \mathbb{H}^{-s}(\Omega)$ and assume that $\mathcal{U} \in \mathring{H}_L^1(y^\alpha, \mathcal{C})$ solves
\eqref{eq:cs-weak-form}. Then $u = \text{tr } \mathcal{U} \in \mathbb{H}^s(\Omega)$ solves \eqref{eq:pde-problem} in the sense that it satisfies \eqref{eq:SolFracLapSeriesSense}.
\end{theorem}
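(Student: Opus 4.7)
The plan is to diagonalize the extension problem against the eigenbasis $\{\phi_k\}_{k\geq 1}$ and reduce \eqref{eq:cs-weak-form} to a family of uncoupled one-dimensional boundary value problems whose solutions can be written explicitly. For each $k\geq 1$ and a.e.\ $y>0$, I would define the Fourier coefficient $U_k(y) = \int_\Omega \mathcal{U}(x,y)\phi_k(x)\,dx$, and observe that by Parseval's identity in $L^2(\Omega)$ one has $\mathcal{U}(x,y) = \sum_k U_k(y)\phi_k(x)$, with the partial sums converging in $\mathring{H}_L^1(y^\alpha,\mathcal{C})$ thanks to the orthogonality properties of $\{\phi_k\}$ recalled in \Cref{subsec:frac-laplace-op}.

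Next, I would test the weak formulation with $v(x,y) = \eta(y)\phi_k(x)$ for arbitrary $\eta\in C^\infty_c([0,\infty))$. Since $\phi_k\in H^1_0(\Omega)$, any such $v$ lies in $\mathring{H}_L^1(y^\alpha,\mathcal{C})$ and $\text{tr } v = \eta(0)\phi_k$. Performing the $x$-integration and using $\|\nabla\phi_k\|_{L^2(\Omega)}^2 = \lambda_k$, equation \eqref{eq:cs-weak-form} collapses to
\begin{equation*}
  \int_0^\infty y^\alpha U_k'(y)\eta'(y)\,dy + \lambda_k\int_0^\infty y^\alpha U_k(y)\eta(y)\,dy = d_s f_k \eta(0) \, ,
\end{equation*}
which is the weak form of the ODE $-(y^\alpha U_k')' + \lambda_k y^\alpha U_k = 0$ on $(0,\infty)$, supplemented by the conormal condition $-\lim_{y\downarrow 0} y^\alpha U_k'(y) = d_s f_k$ and decay as $y\to\infty$.

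The third step is to solve this ODE explicitly. The decaying solution, normalized so that $\psi_k(0)=1$, is
\begin{equation*}
  \psi_k(y) = \tfrac{2^{1-s}}{\Gamma(s)}\bigl(\sqrt{\lambda_k}\,y\bigr)^s K_s\bigl(\sqrt{\lambda_k}\,y\bigr) \, ,
\end{equation*}
where $K_s$ is the modified Bessel function of the second kind. Using the small-argument asymptotics $K_s(z)\sim \tfrac12\Gamma(s)(z/2)^{-s}$ as $z\downarrow 0$, together with the recurrence $\tfrac{d}{dz}[z^s K_s(z)] = -z^s K_{s-1}(z)$, a direct computation yields $-\lim_{y\downarrow 0} y^\alpha \psi_k'(y) = d_s\lambda_k^s$, where the specific value of $d_s$ in \eqref{eq:define-ds} is precisely what absorbs the resulting Gamma factors. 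Since $U_k(y) = U_k(0)\psi_k(y)$, matching the conormal condition forces $U_k(0) = \lambda_k^{-s} f_k$. Passing to the limit $y\downarrow 0$ in $U_k$ and identifying the result with the $k$-th Fourier coefficient of $\text{tr }\mathcal{U}$ then produces $\text{tr }\mathcal{U} = \sum_{k\geq 1}\lambda_k^{-s} f_k \phi_k$, which is exactly \eqref{eq:SolFracLapSeriesSense}.

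The main obstacle I expect is the technical justification of these formal steps at the low regularity $f\in \mathbb{H}^{-s}(\Omega)$: one must verify that each $U_k$ lies in a suitable weighted $H^1$-space on $(0,\infty)$ so the ODE argument applies, that the duality pairing $\langle f,\text{tr } v\rangle$ genuinely decouples mode by mode, and that the trace value $U_k(0)$ can be identified with the $k$-th Fourier coefficient of $\text{tr }\mathcal{U}\in\mathbb{H}^s(\Omega)$. These points follow respectively from the trace theorem $\text{tr }:\mathring{H}_L^1(y^\alpha,\mathcal{C})\to\mathbb{H}^s(\Omega)$ recalled after \eqref{eq:h10-circle}, density of tensor-product test functions in $\mathring{H}_L^1(y^\alpha,\mathcal{C})$, and uniqueness of the weak solution of \eqref{eq:cs-weak-form} via coercivity of $a_\mathcal{C}$.
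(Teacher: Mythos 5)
The paper does not actually prove this theorem: it simply cites \cite{caffarelli2007extension} for $\Omega=\mathbb{R}^d$ and \cite{cabre2010positive,stinga2010extension} for bounded domains. Your proposal reconstructs the standard argument from those references, so it supplies a genuine proof where the paper defers. The argument is correct: diagonalizing against $\{\phi_k\}$ reduces the problem to the family of weighted one-dimensional problems $-(y^\alpha U_k')' + \lambda_k y^\alpha U_k = 0$ with conormal data $d_s f_k$; the decaying solution is a multiple of $(\sqrt{\lambda_k}\,y)^s K_s(\sqrt{\lambda_k}\,y)$; and your computation of the conormal derivative, using $K_s(z)\sim \tfrac12\Gamma(s)(z/2)^{-s}$, the identity $K_{s-1}=K_{1-s}$, and $\tfrac{d}{dz}[z^s K_s(z)]=-z^s K_{s-1}(z)$, does yield $-\lim_{y\downarrow 0}y^\alpha\psi_k'(y)=d_s\lambda_k^s$ with $d_s$ exactly as in \eqref{eq:define-ds}. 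Matching then gives $U_k(0)=\lambda_k^{-s}f_k$ and hence \eqref{eq:SolFracLapSeriesSense}.

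Two points deserve slightly more care if you were to turn this sketch into a full proof. First, the general solution of the modified Bessel equation on each mode also contains an $I_s$ branch, which grows exponentially; the reason it is excluded is not a separately imposed ``decay at infinity'' condition but the membership $\mathcal{U}\in\mathring{H}_L^1(y^\alpha,\mathcal{C})$, which forces $\int_0^\infty y^\alpha\bigl(|U_k|^2+|U_k'|^2\bigr)\,dy<\infty$ and thus kills the $I_s$ component; you allude to this but should make it the operative argument. Second, when you test with $v=\eta\,\phi_k$ for $\eta\in C_c^\infty([0,\infty))$, the weak form you write down has both boundary terms built into it; to recover the classical conormal limit $-\lim_{y\downarrow 0}y^\alpha U_k'(y)=d_s f_k$ you should first establish that $y^\alpha U_k'$ has a limit at $0$ (which follows from interior regularity of the ODE plus the finiteness of the weighted energy). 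With those two refinements the outline is a complete, self-contained proof of the statement.
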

\begin{proof}
See \cite{caffarelli2007extension} for the case $\Omega = \mathbb{R}^d$, and \cite{cabre2010positive,stinga2010extension} for bounded domains.
\end{proof}

\subsection{Truncation}

As a first step towards the numerical approximation of the solution to \eqref{eq:pde-problem} via \eqref{eq:cs-weak-form}
we truncate the infinite cylinder. Define, for $\mathcal{Y}>0$,
\begin{equation*}
 \label{eq:weighted-truncated-space}
 \mathring{H}_L^1(y^\alpha, \mathcal{C}_{\mathcal{Y}} ) = \left\{ w \in H^1(y^\alpha, \mathcal{C}_{\mathcal{Y}}) :
   w |_{\partial_L \mathcal{C}_{\mathcal{Y} } \, \cup \, (\Omega \times \{\mathcal{Y}\} )} = 0 \right\} \, .
\end{equation*}
We introduce the following problem: find $\mathcal{U}_{\mathcal{Y}} \in \mathring{H}_L^1(y^\alpha,\mathcal{C}_{\mathcal{Y}} )$ such that
\begin{equation}
 \label{eq:cs-weak-form-trunc}
 a_{\mathcal{C}_{\mathcal{Y}} }( \mathcal{U}_{\mathcal{Y}}, v ) = d_s \langle f , \text{tr } v \rangle \, , \quad \forall \, v \in \mathring{H}_L^1(y^\alpha, \mathcal{C}_{\mathcal{Y}} ) \, ,
\end{equation}
where the bilinear form $a_{\mathcal{C}_{\mathcal{Y}} }$ is defined in an analogous manner to $a_{\mathcal{C}}$.

The truncation error is quantified below.

\begin{proposition}[truncation]
Let $\mathcal{Y}\geq 1$. If $\mathcal{U} \in \mathring{H}_L^1(y^\alpha, \mathcal{C} )$ solves \eqref{eq:cs-weak-form}, then
\[
  \int_{\mathcal{Y}}^\infty\int_\Omega y^\alpha |\nabla \mathcal{U}|^2 \ d \mathbf{x} \lesssim e^{- \sqrt{\lambda_1} \mathcal{Y}} \| f \|_{\mathbb{H}^{-s}(\Omega)}^2.
\]
Consequently, if $\mathcal{U}_{\mathcal{Y}} \in \mathring{H}_L^1(y^\alpha,\mathcal{C}_{\mathcal{Y}} )$ solves \eqref{eq:cs-weak-form-trunc}, we have
\begin{equation*}
 \label{eq:exp-conv-trunc}
 \| u - \text{tr } \mathcal{U}_{\mathcal{Y}} \|_{\mathbb{H}^s(\Omega)} = \| \nabla ( \mathcal{U} - \mathcal{U}_{\mathcal{Y}} )\|_{L^2(y^\alpha, \mathcal{C} )} \lesssim e^{-\sqrt{\lambda_1} \, \mathcal{Y}/4} \| f \|_{\mathbb{H}^{-s}(\Omega)} \, .
\end{equation*}
\end{proposition}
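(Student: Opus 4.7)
The plan is to diagonalize in the $x$-variable using the Dirichlet eigenbasis $\{\phi_k\}$, solve the resulting one-dimensional problems in $y$ in closed form, and then pass to the truncated problem by a C\'ea-type argument with a smooth cutoff.

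Expanding $f=\sum_k f_k\phi_k$ and $\mathcal{U}(x,y)=\sum_k U_k(y)\phi_k(x)$ and testing \eqref{eq:cs-weak-form} against separated functions $\phi_k(x)w(y)$ with $w\in C^\infty_c((0,\infty))$ shows that each coefficient $U_k$ solves the Bessel-type two-point problem
\[
-(y^\alpha U_k')' + \lambda_k\, y^\alpha U_k = 0 \text{ on }(0,\infty), \quad \lim_{y\downarrow 0} y^\alpha U_k'(y) = -d_s f_k, \quad \lim_{y\to\infty} U_k(y)=0.
\]
Its unique solution is, in closed form,
\[
U_k(y) = \frac{2^{1-s}}{\Gamma(s)}\,\lambda_k^{-s} f_k \left(\sqrt{\lambda_k}\, y\right)^{s} K_s\!\left(\sqrt{\lambda_k}\, y\right),
\]
where $K_s$ is the modified Bessel function of the second kind; this same representation underlies the scheme in \Cref{sec:diagonalization}.

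By Parseval in $\Omega$,
\[
\int_{\mathcal{Y}}^\infty\!\int_\Omega y^\alpha |\nabla\mathcal{U}|^2\, d\mathbf{x} \;=\; \sum_{k\ge 1} \int_{\mathcal{Y}}^\infty y^\alpha \bigl(|U_k'(y)|^2 + \lambda_k |U_k(y)|^2\bigr) dy.
\]
The large-argument asymptotics $K_s(z), |K_s'(z)|\lesssim z^{-1/2}e^{-z}$, applied after the change of variables $\tau=\sqrt{\lambda_k}\,y$, give
\[
\int_{\mathcal{Y}}^\infty y^\alpha \bigl(|U_k'|^2 + \lambda_k |U_k|^2\bigr) dy \;\lesssim\; \lambda_k^{-s}|f_k|^2\, e^{-2\sqrt{\lambda_k}\,\mathcal{Y}}.
\]
Since $\sqrt{\lambda_k}\ge\sqrt{\lambda_1}$, one may uniformly replace the exponential by $e^{-\sqrt{\lambda_1}\mathcal{Y}}$; summing in $k$ and recognizing the definition $\|f\|_{\mathbb{H}^{-s}(\Omega)}^2=\sum_k\lambda_k^{-s}|f_k|^2$ yields the first displayed inequality.

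For the second inequality, extending every element of $\mathring{H}_L^1(y^\alpha,\mathcal{C}_{\mathcal{Y}})$ by zero identifies this space with a closed subspace of $\mathring{H}_L^1(y^\alpha,\mathcal{C})$, and subtracting \eqref{eq:cs-weak-form-trunc} from \eqref{eq:cs-weak-form} shows that $\mathcal{U}_{\mathcal{Y}}$ is the $a_{\mathcal{C}}$-orthogonal projection of $\mathcal{U}$ onto this subspace. C\'ea's lemma therefore gives
\[
\|\nabla(\mathcal{U}-\mathcal{U}_{\mathcal{Y}})\|_{L^2(y^\alpha,\mathcal{C})} \;\lesssim\; \|\nabla((1-\eta)\mathcal{U})\|_{L^2(y^\alpha,\mathcal{C})}
\]
for any smooth cutoff $\eta(y)$ equal to $1$ on $[0,\mathcal{Y}/2]$, supported in $[0,\mathcal{Y}]$, with $|\eta'|\lesssim 1/\mathcal{Y}\le 1$ (since $\mathcal{Y}\ge 1$). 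The gradient portion is dominated by $\int_{\mathcal{Y}/2}^\infty\!\int_\Omega y^\alpha|\nabla\mathcal{U}|^2$, which by the first inequality applied at $\mathcal{Y}/2$ is $\lesssim e^{-\sqrt{\lambda_1}\,\mathcal{Y}/2}\|f\|_{\mathbb{H}^{-s}(\Omega)}^2$; the $\eta'\mathcal{U}$ contribution is controlled in the same way by the analogous $L^2(y^\alpha)$ tail estimate extracted from the series representation (again using Bessel decay). Taking square roots produces the factor $e^{-\sqrt{\lambda_1}\,\mathcal{Y}/4}$, and the trace-energy identity of the Caffarelli--Silvestre extension converts the weighted gradient norm on the left into $\|u-\text{tr }\mathcal{U}_{\mathcal{Y}}\|_{\mathbb{H}^s(\Omega)}$. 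The main technical care lies in making the Bessel asymptotics uniform across the spectrum and in handling the lower-order $\eta'\mathcal{U}$ term by the same exponential factor; everything else is bookkeeping.
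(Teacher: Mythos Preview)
The paper does not supply its own argument here; it simply cites \cite[Theorem 3.5]{nochetto2015pde}. Your proof is correct and is exactly the strategy used in that reference: diagonalize in $x$ via $\{\phi_k\}$, represent $U_k$ through $(\sqrt{\lambda_k}y)^sK_s(\sqrt{\lambda_k}y)$, harvest exponential decay from the large-argument asymptotics of $K_s$, and then deduce the truncation bound by a C\'ea/best-approximation argument with a smooth cutoff in $y$.

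Two minor remarks that do not affect correctness. First, when you invoke the first inequality at height $\mathcal{Y}/2$ you formally need $\mathcal{Y}/2\ge 1$; this is harmless since the Bessel tail bounds hold uniformly on $[\sqrt{\lambda_1}/2,\infty)$ and for $\mathcal{Y}$ in any bounded interval the estimate is trivial. Second, the equality $\|u-\text{tr }\mathcal{U}_{\mathcal{Y}}\|_{\mathbb{H}^s(\Omega)}=\|\nabla(\mathcal{U}-\mathcal{U}_{\mathcal{Y}})\|_{L^2(y^\alpha,\mathcal{C})}$ is not just the trace inequality: it uses that $\mathcal{U}-\mathcal{U}_{\mathcal{Y}}$ is itself the $\alpha$-harmonic extension of its trace (this follows from Galerkin orthogonality and the structure of the truncated space), which is worth stating explicitly.
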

\begin{proof}
  See \cite[Theorem 3.5]{nochetto2015pde}.
\end{proof}

\subsection{Tensorial Discretization}
\label{subsec:tensor-fem}

Let us recall the following a general result.

\begin{theorem}[tensor products]
\label{thm:tensor-poducts}
Let $(M_i,\mu_i)$, $i=1,2$, be measure spaces. Then we have the isomorphism
\[
  L^2(M_1,\mu_1) \otimes L^2(M_2,\mu_2) \eqsim L^2( M_1\times M_2, \mu_1 \otimes \mu_2 ) \, ,
\]
provided that all spaces are separable. In addition, if $\{\phi_k^{(i)}\}_{k\geq 1}$ is an orthonormal basis of $L^2(M_i,\mu_i)$, then $\{\phi_k^{(1)}\phi_m^{(2)}\}_{k,m \geq1}$ is an orthonormal basis of their tensor product.
\end{theorem}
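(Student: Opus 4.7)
The plan is to prove both claims at once by constructing a concrete unitary map. First, I would define a bilinear map $T: L^2(M_1,\mu_1) \times L^2(M_2,\mu_2) \to L^2(M_1\times M_2, \mu_1\otimes\mu_2)$ by $T(f,g)(x,y) = f(x)g(y)$. An application of Tonelli's theorem immediately gives $\|T(f,g)\|_{L^2(M_1\times M_2, \mu_1\otimes\mu_2)} = \|f\|_{L^2(M_1,\mu_1)}\|g\|_{L^2(M_2,\mu_2)}$, so the map is well-defined. By the universal property of the algebraic tensor product, $T$ extends to a linear map $\widetilde{T}:L^2(M_1,\mu_1)\otimes L^2(M_2,\mu_2) \to L^2(M_1\times M_2, \mu_1\otimes\mu_2)$.

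Next, I would verify that $\widetilde{T}$ is an isometry on simple tensors and extends to the Hilbert space tensor product. Using Fubini's theorem, for any basis elements,
\[
 \int_{M_1\times M_2} \phi_k^{(1)}(x)\phi_m^{(2)}(y)\, \overline{\phi_{k'}^{(1)}(x)\phi_{m'}^{(2)}(y)} \, d(\mu_1\otimes\mu_2) = \delta_{kk'}\delta_{mm'},
\]
so $\{\phi_k^{(1)}\phi_m^{(2)}\}_{k,m\geq1}$ is an orthonormal system in $L^2(M_1\times M_2, \mu_1\otimes\mu_2)$. Since by the definition of the Hilbert space tensor product $\{\phi_k^{(1)}\otimes\phi_m^{(2)}\}_{k,m\geq1}$ is an orthonormal basis of $L^2(M_1,\mu_1)\otimes L^2(M_2,\mu_2)$, the map $\widetilde{T}$ sends an orthonormal basis to an orthonormal system, hence extends to an isometric embedding.

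The main obstacle, and the only non-routine step, is completeness: showing that $\{\phi_k^{(1)}\phi_m^{(2)}\}_{k,m}$ spans a dense subspace of $L^2(M_1\times M_2, \mu_1\otimes\mu_2)$, which will simultaneously prove surjectivity of $\widetilde{T}$ and the basis claim. I would do this by taking $h\in L^2(M_1\times M_2, \mu_1\otimes\mu_2)$ orthogonal to every $\phi_k^{(1)}\phi_m^{(2)}$ and proving $h=0$. For fixed $k$, Fubini gives
\[
 0 = \int_{M_1\times M_2} h(x,y)\,\overline{\phi_k^{(1)}(x)\phi_m^{(2)}(y)}\, d(\mu_1\otimes\mu_2) = \int_{M_2}\overline{\phi_m^{(2)}(y)}\, H_k(y)\, d\mu_2,
\]
where $H_k(y) := \int_{M_1} h(x,y)\overline{\phi_k^{(1)}(x)}\, d\mu_1$ is well-defined $\mu_2$-a.e.\ by $h\in L^2$ and lies in $L^2(M_2,\mu_2)$ by Cauchy--Schwarz and Tonelli. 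Since this holds for all $m$ and $\{\phi_m^{(2)}\}$ is an orthonormal basis of $L^2(M_2,\mu_2)$, we get $H_k=0$ in $L^2(M_2,\mu_2)$. Therefore, for $\mu_2$-a.e.\ $y$, the function $x\mapsto h(x,y)$ (which is in $L^2(M_1,\mu_1)$ for a.e.\ $y$) is orthogonal to every $\phi_k^{(1)}$, hence vanishes. A final Fubini argument then yields $h=0$ in $L^2(M_1\times M_2, \mu_1\otimes\mu_2)$.

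Separability is used precisely to guarantee that countable orthonormal bases $\{\phi_k^{(i)}\}_{k\geq1}$ exist, so that the above density argument uses only a countable index set and the product $\mu_1\otimes\mu_2$ is $\sigma$-finite enough for Fubini/Tonelli to apply without obstruction. Combining the isometry with completeness gives the unitary isomorphism, and as a byproduct we obtain that $\{\phi_k^{(1)}\phi_m^{(2)}\}_{k,m\geq1}$ is an orthonormal basis of the tensor product (identified with $L^2(M_1\times M_2, \mu_1\otimes\mu_2)$).
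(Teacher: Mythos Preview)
Your argument is the standard construction and is correct; the paper does not actually give a proof of this theorem but simply refers to \cite[Theorem II.10 and Proposition II.4.2]{reed1981functional}, and what you have written is essentially the argument one finds there. One small caveat: your claim that separability of the $L^2$ spaces alone makes Fubini/Tonelli available is not quite right---Reed--Simon explicitly assume the underlying measure spaces are $\sigma$-finite, which is what is really needed for the iterated-integral steps and for the product measure to behave well; separability is used only to ensure the bases are countable so that the exceptional sets where $H_k\neq 0$ have a countable union.
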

\begin{proof}
  See \cite[Theorem II.10]{reed1981functional} and \cite[Proposition II.4.2]{reed1981functional}.
\end{proof}

The following corollary motivates, as in \cite{nochetto2015pde,banjai2019tensor}, the use of a tensorial discretization.

\begin{corollary}[tensor products]
\label{cor:tensor-products}
Let 
\begin{equation*}
 \label{eq:h1-L-Y}
 H_{\mathcal{Y}}^1 (y^\alpha, (0,\mathcal{Y}) ) = \{ w \in L^2 (y^\alpha , (0,\mathcal{Y}) ) : w' \in L^2 (y^\alpha , (0,\mathcal{Y}) ) \, , \,
   w(\mathcal{Y}) = 0 \} \, .
\end{equation*}
Then we have
\begin{equation}
 \label{eq:equiv-spaces-tensor}
 \mathring{H}_L^1(y^\alpha, \mathcal{C}_{\mathcal{Y}} ) = H_0^1(\Omega) \otimes H_{\mathcal{Y}}^1(y^\alpha, (0,\mathcal{Y}) ) \, .
\end{equation}
\end{corollary}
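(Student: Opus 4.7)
The plan is to use \Cref{thm:tensor-poducts} to identify the ambient $L^2$ spaces, and then distinguish the Sobolev subspaces through expansion in a product basis built from two separate eigenvalue problems. Apply \Cref{thm:tensor-poducts} with $M_1=\Omega$ equipped with Lebesgue measure and $M_2=(0,\mathcal{Y})$ equipped with the weighted measure $y^\alpha\,dy$. Both measure spaces are separable, so
\[
  L^2(y^\alpha,\mathcal{C}_{\mathcal{Y}}) \eqsim L^2(\Omega) \otimes L^2(y^\alpha,(0,\mathcal{Y})) ,
\]
and any pair of orthonormal bases $\{\phi_k\}\subset L^2(\Omega)$, $\{\psi_m\}\subset L^2(y^\alpha,(0,\mathcal{Y}))$ yields a product orthonormal basis $\{\phi_k\psi_m\}_{k,m\geq 1}$ of the left-hand side.

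For the first factor I take $\{\phi_k\}$ to be the Dirichlet Laplacian eigenbasis from \Cref{subsec:frac-laplace-op}; it is an orthonormal basis of $L^2(\Omega)$, sits in $H_0^1(\Omega)$, and satisfies $\|\nabla\phi_k\|_{L^2(\Omega)}^2=\lambda_k$, so $\{\lambda_k^{-1/2}\phi_k\}$ is an orthonormal basis of $H_0^1(\Omega)$ equipped with the Dirichlet inner product. For the second factor I construct $\{\psi_m\}$ as the eigenfunctions of the weighted Sturm-Liouville problem $-(y^\alpha \psi_m')'=\mu_m y^\alpha \psi_m$ on $(0,\mathcal{Y})$, with the natural condition $\lim_{y\downarrow 0}y^\alpha\psi_m'(y)=0$ and $\psi_m(\mathcal{Y})=0$. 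The eigenvalues $\mu_m$ are positive and accumulate at infinity, $\{\psi_m\}$ is orthonormal in $L^2(y^\alpha,(0,\mathcal{Y}))$, lies in $H_{\mathcal{Y}}^1(y^\alpha,(0,\mathcal{Y}))$, and $\{\mu_m^{-1/2}\psi_m\}$ is an orthonormal basis of $H_{\mathcal{Y}}^1(y^\alpha,(0,\mathcal{Y}))$ in the natural inner product $(v,w)\mapsto\int_0^{\mathcal{Y}}y^\alpha v'w'\,dy$ (the weighted Poincaré inequality noted after the definition of $\mathring{H}_L^1(y^\alpha,\mathcal{C})$ restricts to the one-dimensional setting). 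Given $w\in\mathring{H}_L^1(y^\alpha,\mathcal{C}_{\mathcal{Y}})$, expand $w=\sum_{k,m}w_{km}\phi_k\psi_m$ in $L^2(y^\alpha,\mathcal{C}_{\mathcal{Y}})$; Fubini plus the two orthogonality relations give
\[
  \int_{\mathcal{C}_{\mathcal{Y}}}y^\alpha|\nabla w|^2\,d\mathbf{x} = \sum_{k,m}\bigl(\lambda_k+\mu_m\bigr)|w_{km}|^2 ,
\]
which is finite iff $w$ represents an element of the Hilbert tensor product $H_0^1(\Omega)\otimes H_{\mathcal{Y}}^1(y^\alpha,(0,\mathcal{Y}))$ under the identification above, proving \eqref{eq:equiv-spaces-tensor}. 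The lateral trace $w|_{\partial_L\mathcal{C}_{\mathcal{Y}}}$ and the top trace $w|_{\Omega\times\{\mathcal{Y}\}}$ vanish automatically since each basis function $\phi_k$ vanishes on $\partial\Omega$ and each $\psi_m$ vanishes at $\mathcal{Y}$.

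The main technical obstacle is producing the Sturm-Liouville basis $\{\psi_m\}$ rigorously in the weighted setting, because the weight $y^\alpha$ with $\alpha\in(-1,1)$ is either degenerate or singular at $y=0$ and standard Sturm-Liouville theory does not apply verbatim. The resolution is to define the solution operator of the SL problem variationally on $H_{\mathcal{Y}}^1(y^\alpha,(0,\mathcal{Y}))$, observe via a one-dimensional weighted Rellich-Kondrachov argument that the embedding $H_{\mathcal{Y}}^1(y^\alpha,(0,\mathcal{Y}))\hookrightarrow L^2(y^\alpha,(0,\mathcal{Y}))$ is compact, and then invoke the spectral theorem for the resulting compact self-adjoint operator on $L^2(y^\alpha,(0,\mathcal{Y}))$ to obtain the sequence $\{(\mu_m,\psi_m)\}$; once this is in hand, the remainder of the argument is a bookkeeping exercise in Parseval identities.
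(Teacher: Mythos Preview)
Your approach---building a product basis from the Dirichlet eigenfunctions on $\Omega$ and the weighted Sturm--Liouville eigenfunctions on $(0,\mathcal{Y})$, then characterising $\mathring{H}_L^1(y^\alpha,\mathcal{C}_{\mathcal{Y}})$ via a Parseval identity---is essentially the paper's strategy, which directly verifies orthogonality and completeness of $\{\phi_j\psi_k\}$ in the $\mathring{H}_L^1$ inner product. Your Parseval formulation is a clean repackaging of that argument, and your remark about producing $\{\psi_m\}$ via compactness of the resolvent is well taken (the paper defers this to \Cref{thm:eigenpairsyalpha}).

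There is, however, a genuine gap in your final step. You assert that $\sum_{k,m}(\lambda_k+\mu_m)|w_{km}|^2<\infty$ holds if and only if $w$ lies in the \emph{Hilbert} tensor product $H_0^1(\Omega)\otimes H_{\mathcal{Y}}^1(y^\alpha,(0,\mathcal{Y}))$. But the Hilbert tensor product of \Cref{thm:tensor-poducts} carries the cross norm: with your normalisations, $\{\lambda_k^{-1/2}\mu_m^{-1/2}\,\phi_k\otimes\psi_m\}$ is its orthonormal basis, and membership is characterised by $\sum_{k,m}\lambda_k\mu_m|w_{km}|^2<\infty$. This is strictly stronger than your condition. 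For instance, with $\lambda_k\sim\mu_k\sim k^2$ and $w_{km}=\delta_{km}k^{-2}$ one has $\sum_{k,m}(\lambda_k+\mu_m)|w_{km}|^2\sim\sum_k k^{-2}<\infty$ while $\sum_{k,m}\lambda_k\mu_m|w_{km}|^2\sim\sum_k 1=\infty$. What your Parseval identity actually proves is that $\{\phi_k\psi_m\}$ is a complete orthogonal system in $\mathring{H}_L^1(y^\alpha,\mathcal{C}_{\mathcal{Y}})$, i.e., that the \emph{algebraic} tensor product embeds densely. This is exactly what the paper establishes and exactly what is needed downstream for the tensor FEM, but it is not an isomorphism with the Hilbert tensor product in the sense of \Cref{thm:tensor-poducts}. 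The corollary's statement is itself loose on this point; your proof should make explicit which tensor product structure is intended rather than invoking the cross-norm completion.
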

\begin{proof}
Let $\{ \phi_j \}_{j \geq 1} \subset H_0^1(\Omega)$ and
$\{ \psi_k \}_{k \geq 1} \subset H_{\mathcal{Y}}^1 (y^\alpha,(0,\mathcal{Y}))$ be orthonormal bases which, in addition, are orthogonal in $L^2(\Omega)$ and $L^2(y^\alpha,(0,\mathcal{Y}))$, respectively.
We can, for instance, choose $\{\phi_j\}_{j\geq1}$ to be the eigenfunctions of the Dirichlet Laplacian, as described in \Cref{subsec:frac-laplace-op}. An example of the family $\{\psi_k \}_{k \geq 1}$ will be given below in \Cref{thm:eigenpairsyalpha}.

Clearly, $\phi_j \psi_k \in \mathring{H}_L^1 (y^\alpha, \mathcal{C}_{\mathcal{Y}})$. Observe, in addition, that
\begin{align*}
 (\phi_j \psi_k , \phi_s \psi_t)_{\mathring{H}_L^1(y^\alpha, \mathcal{C}_{\mathcal{Y}})} &= \int_0^{\mathcal{Y}} \int_\Omega \big( [\nabla \phi_j(x) \cdot \nabla \phi_s(x)]
     \psi_k(y) \psi_t(y) \\
   &  \quad \qquad \qquad + \phi_j(x) \phi_s(x) \psi_k'(y) \psi_t'(y) \big) \, dx \, dy \\
   &= \delta_{js} \int_0^{\mathcal{Y}} y^\alpha \psi_k(y) \psi_t(y) \, dy + \delta_{kt} \int_\Omega \phi_j(x) \phi_s(x) \, dx \\
   &= C_{jkst} \delta_{js} \delta_{kt} \, .
\end{align*}
Thus, the family $\{ \phi_j \psi_k \}_{j,k \geq 1}$ is linearly independent. Using Gram-Schmidt, this linearly independent set contains an orthonormal subset which, to simplify the notation, we do not relabel.

Next we show that $\{\phi_j \psi_k\}_{j,k\geq 1}$ is complete in $\mathring{H}_L^1 (y^\alpha, \mathcal{C}_{\mathcal{Y}})$. To see this we assume that
$W \in \mathring{H}_L^1 (y^\alpha, \mathcal{C}_{\mathcal{Y}})$ is such that
\[
 \int_0^{\mathcal{Y}} y^\alpha \int_\Omega \left( [\nabla_x W(x,y) \cdot \nabla \phi_j(x)]\psi_k(y) + \partial_y W(x,y) \phi_j(x)\psi_k'(y) \right) \, dx \, dy = 0 \, ,
\]
for all $j,k \in \mathbb{N}$. This implies that
\begin{align*}
 0 &= \int_0^{\mathcal{Y}} y^\alpha  \psi_k(y) \left( \int_\Omega \nabla_x W(x,y) \cdot \nabla_x \phi_j(x) \, dx \right)\, dy \, + \\
   & \quad \int_\Omega \phi_j(x) \left( \int_0^{\mathcal{Y}} y^\alpha \partial_y W(x,y) \psi_k'(y) \, dy  \right) \, dx \, .
\end{align*}
But, because $\phi_j$ and $\psi_k$ are linearly independent, this is only possible if
\[
 \int_\Omega \nabla_x W(x,y) \cdot \nabla_x \phi_j(x) \, dx = 0 \, ,
\]
for all $j \in \mathbb{N}$ and a.e.~$y \in (0, \mathcal{Y})$; and
\[
 \int_0^{\mathcal{Y}} y^\alpha \partial_y W(x,y) \psi_k'(y) \, dy = 0 \, ,
\]
for all $k \in \mathbb{N}$ and a.e.~$x \in \Omega$. Let now, for each $j \in \mathbb{N}$, $S_j \subset (0,\mathcal{Y})$ be the set where the first condition is false. Since $S_j$ is null, i.e.,
\[  \int_{S_j} y^\alpha \, dy = 0 \, , \]
we have that $S = \bigcup_{j=1}^\infty S_j$ is also a null set, as it is a countable union of null sets.

Similarly, for $k \in \mathbb{N}$ we let $T_k \subset \Omega$ is the where the second condition fails. We again have that
\[  \int_{T_k} dx = 0 \, , \]
and that $T = \bigcup_{k=1}^\infty T_k$ is another null set.

The previous observations show that
\[
  A(x,y) = |\nabla_x W(x,y)| + |\partial_y W(x,y)|=0, \quad \forall (x,y) \in \mathcal{C}_{\mathcal{Y}} \setminus (T \times S).
\]
However, $T \times S$ is a null set and therefore, $A(x,y) = 0$ almost everywhere in $\mathcal{C}_{\mathcal{Y}}$. As a consequence, $W=0$ a.e. in $\mathcal{C}_{\mathcal{Y}}$.

In conclusion, we have shown that $\{ \phi_j \psi_k\}_{j,k\geq 1}$ is a complete orthonormal basis in $\mathring{H}_L^1(y^\alpha, \mathcal{C}_{\mathcal{Y}})$.

This shows that the mapping defined by
\begin{align*}
  \mathcal{W} : H_0^1(\Omega) \otimes H_{\mathcal{Y}}^1 ( y^\alpha, (0, \mathcal{Y}) ) &\to \mathring{H}_L^1(y^\alpha , \mathcal{C}_{\mathcal{Y}}) \\
  \phi_j \otimes \psi_k &\mapsto \phi_j \psi_k \, ,
\end{align*}
and extended by linearity, is the requisite isomorphism.
\end{proof}

We now proceed to approximate the solution of \eqref{eq:cs-weak-form-trunc} via a Galerkin technique.
We approximate $H_0^1(\Omega)$ by a finite element space consisting of piecewise linear and continuous functions that vanish on the boundary
\cite{ern2004theory}, which we denote $\mathbb{V}_h$. Usually this will be constructed on the basis of a quasi-uniform triangulation of $\Omega$
of size $h>0$ so that
$
  \dim \mathbb{V}_h \sim h^{-d} \, .
$
For the extended dimension, at this stage, we merely introduce a finite dimensional subspace $\mathcal{S}_{\mathcal{Y}}$ of $H^1_L(y^\alpha,(0,\mathcal{Y}))$.
We assume, for $\mathcal{K} \in \mathbb{N}$, that
$
  \dim \mathcal{S}_{\mathcal{Y}} \sim \mathcal{K} \, .
$
Clearly, $\dim \mathbb{V}_h \otimes \mathcal{S}_{\mathcal{Y}} \sim \mathcal{K} h^{-d}$ and, as shown in \eqref{eq:equiv-spaces-tensor},
\begin{equation*}
 \label{eq:tensor-fem-disc}
 \mathbb{V}_h \otimes \mathcal{S}_{\mathcal{Y}} \subset H_0^1(\Omega) \otimes H_{\mathcal{Y}}^1(y^\alpha, (0,\mathcal{Y}) ) =
 \mathring{H}_L^1(y^\alpha, \mathcal{C}_{\mathcal{Y}} ) \, .
\end{equation*}

The Galerkin approximation of the solution to \eqref{eq:cs-weak-form-trunc} is then denoted by $\mathcal{U}_{h,\mathcal{Y}}^{\mathcal{K}} \in \mathbb{V}_h \otimes \mathcal{S}_{\mathcal{Y}}$. From its definition, the best approximation result immediately follow:
\begin{equation*}\label{eq:CeaForAnyDiscretization}
    \| \text{tr } (\mathcal{U}_{\mathcal{Y}} - \mathcal{U}_{h,\mathcal{Y}}^{\mathcal{K}}) \|_{\mathbb{H}^s(\Omega)} = \| \nabla( \mathcal{U}_{\mathcal{Y}} - \mathcal{U}_{h,\mathcal{Y}}^{\mathcal{K}} ) \|_{L^2(y^\alpha,\mathcal{C}_{\mathcal{Y}}) }
    = \inf_{W \in \mathbb{V}_h \otimes \mathcal{S}_{\mathcal{Y}}} \| \nabla( \mathcal{U}_{\mathcal{Y}} - W ) \|_{L^2(y^\alpha,\mathcal{C}_{\mathcal{Y}}) } \, .
\end{equation*}
Some choices of $\mathcal{S}_{\mathcal{Y}}$ that have been used in the literature, and the corresponding error estimates, are detailed in \Cref{sec:intro}.

\section{Diagonalization}
\label{sec:diagonalization}

Let us now detail the diagonalization approach originally suggested in \cite{banjai2019tensor}, and whose extension shall be the basis of our approach.

\subsection{Background}
The tensorial discretization leads to a representation of the Galerkin solution to \eqref{eq:cs-weak-form-trunc}: For $(x,y) \in \mathcal{C}_{\mathcal{Y}}$,
\begin{equation}
 \label{eq:sol-rep}
 \mathcal{U}_{h,\mathcal{Y}}^{\mathcal{K}} (x,y) = \sum_{k=1}^{\mathcal{K}} U_k(x) v_k(y) \,\, , \,\,
 \qquad
 u_{h,\mathcal{Y}}^{\mathcal{K}}(x) = \mathcal{U}_{h,\mathcal{Y}}^{\mathcal{K}} (x,0) = \sum_{k=1}^{\mathcal{K}} U_k(x) v_k(0) \, ,
\end{equation}
where $U_k \in \mathbb{V}_h $ and $ v_k \in \mathcal{S}_{\mathcal{Y}} $. As a means to reduce computational complexity, a diagonalization
technique is applied in \cite{banjai2019tensor}, where $\mathcal{S}_{\mathcal{Y}}$ is an \textit{hp}-FE space over a geometrically graded mesh.
Then, the elements $v_k$ in \eqref{eq:sol-rep} are chosen as the solutions to the following discrete eigenvalue problem:
find $(v, \mu) \in \mathcal{S}_{\mathcal{Y}} \setminus \{ 0 \} \times \mathbb{R}$ such that
\begin{equation}
 \label{eq:eigenval-problem}
 \int_0^{\mathcal{Y}} y^\alpha v'(y) w'(y) \, dy = \mu \int_0^{\mathcal{Y}} y^\alpha v(y) w(y) \, dy \,\, , \,\, \forall \, w\in \mathcal{S}_{\mathcal{Y}} \, .
\end{equation}

A main drawback to this approach is that computing eigenvalues is inherently numerically unstable; see \cite{zhang2015many}. Some attempts at addressing this,
by suitably choosing the parameters in the \textit{hp}-FEM are discussed in the Section on Numerical Experiments and the Conclusions of \cite{MR4530200}. This,
however, is an essential difficulty. The eigenvalue problem \eqref{eq:eigenval-problem} is very ill-conditioned.

Ignoring conditioning issues,
let us explain what is the advantage of this diagonalization. We begin by observing that, as shown in \cite{banjai2019tensor}, the solutions to
\eqref{eq:eigenval-problem} form an orthonormal basis of the FE space $\mathcal{S}_{\mathcal{Y}}$ which, in addition, satisfy
\[
  \int_0^{\mathcal{Y}} y^\alpha v_k'(y) v_i'(y) dy = \mu_k \delta_{ki} \, .
\]
We may then write \eqref{eq:sol-rep} using as $v_k$ the solutions to \eqref{eq:eigenval-problem}. Choosing, for our test function, $\zeta = Vv_i$,
where $V\in \mathbb{V}_h$, we obtain
\begin{align*}
  &\int_{\Omega} \int_0^{\mathcal{Y}} y^\alpha \nabla \mathcal{U}_{h,\mathcal{Y}}^{\mathcal{K}} (x,y) \cdot \nabla \big( V(x) v_i(y) \big) \, dx \, dy \\
  &= \int_\Omega \int_0^{\mathcal{Y}} y^\alpha \nabla \left( \sum_{k=1}^{\mathcal{K}} U_k(x) v_k(y) \right) \cdot \nabla \big( V(x) v_i(y) \big) \, dx \, dy  \\
  &= \sum_{k=1}^{\mathcal{K}} \int_\Omega \left[ \nabla U_k(x) \cdot \nabla V(x) \int_0^{\mathcal{Y}} y^\alpha v_k(y) v_i(y)  dy
  +  U_k(x) V(x) \int_0^{\mathcal{Y}} y^\alpha v_k'(y) v_i'(y)  dy \right]  dx \\
  &= \int_\Omega \nabla U_k(x) \cdot \nabla V(x) \, dx + \mu_k \int_\Omega U_k(x) V(x) \, dx .
\end{align*}
This, for $k = 1, \ldots, \mathcal{K}$, implies the functions $U_k \in \mathbb{V}_h$ solve
\begin{equation}
 \int_\Omega \nabla U_k(x) \cdot \nabla V(x) \, dx + \mu_k \int_\Omega U_k(x) V(x) \, dx = d_s v_k(0) \langle f, V\rangle \, .
\label{eq:FindTheUks}
\end{equation}
The efficiency of the diagonalization technique follows from this decoupling since each of the $\mathcal{K}$ problems now only involves solving over $\Omega$
and they are independent of each other, meaning they may be solved in parallel.

\subsection{A Semi-Analytic Approach}

To avoid numerically solving the eigenvalue problem \eqref{eq:eigenval-problem} we propose an exact diagonalization technique.
We will solve \eqref{eq:eigenval-problem} exactly. We first write the following problem. Find pairs $(\mu,\psi\neq 0)$ such that:
\begin{equation}
  \label{eq:eigen-bvp}
   - (y^\alpha \psi'(y))' = \mu y^\alpha  \psi(y) \  \text{in  } (0,\mathcal{Y}) \, , \quad
   -y^\alpha \psi'(y) \to 0 \text{ as } y \downarrow 0 \, , \text{ and }
   \psi(\mathcal{Y}) = 0 \, .
\end{equation}

\begin{theorem}[eigenpairs]
\label{thm:eigenpairsyalpha}
There is a countable number $\{(\mu_k,\psi_k)\}_{k=1}^\infty \subset \mathbb{R}_+ \times H^1_{\mathcal{Y}}(y^\alpha,(0,\mathcal{Y}))$ of solutions to \eqref{eq:eigen-bvp}. The eigenvalues may be numbered so that 
\begin{equation*}
 \label{eq:sl-eigenvalues}
 0 < \mu_1 < \mu_2 \leq \cdots \leq \mu_n \leq \cdots \,, \qquad \qquad \mu_n \to \infty, \ n \to \infty \, .
\end{equation*}
The functions $\{ \psi_k \}_{k\geq 1}$ are an orthonormal basis of $L^2(y^\alpha, (0,\mathcal{Y}))$. As a consequence,
\begin{equation}
 \label{eq:orthogonal-eigen}
 \int_0^{\mathcal{Y}} y^\alpha \psi_j'(y) \psi_k'(y)\, dy = \mu_j \delta_{jk} \, .
\end{equation}
Finally, $\{ \psi_k \}_{k \geq 1}$ is an orthogonal basis of $H_{\mathcal{Y}}^1 (y^\alpha, (0,\mathcal{Y}))$.
\end{theorem}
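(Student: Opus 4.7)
My plan is to reduce the theorem to an application of the spectral theorem for compact, self-adjoint, positive operators on the weighted space $L^2(y^\alpha,(0,\mathcal{Y}))$. First, I would derive the weak formulation of \eqref{eq:eigen-bvp}: multiplying by a test function $w \in H^1_{\mathcal{Y}}(y^\alpha,(0,\mathcal{Y}))$ and integrating by parts, the Dirichlet condition at $y=\mathcal{Y}$ together with the natural Neumann condition $-y^\alpha \psi'(y) \to 0$ as $y \downarrow 0$ kill both boundary contributions, yielding the symmetric bilinear form identity
\begin{equation*}
a(\psi,w) \coloneqq \int_0^{\mathcal{Y}} y^\alpha \psi'(y) w'(y)\, dy \,=\, \mu \int_0^{\mathcal{Y}} y^\alpha \psi(y) w(y)\, dy.
\end{equation*}

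Next, I would verify the hypotheses needed to run the spectral theory. Since $\alpha \in (-1,1)$, the weight $y^\alpha$ lies in the Muckenhoupt class $A_2$. Together with a one-dimensional weighted Poincar\'e inequality on $H^1_{\mathcal{Y}}(y^\alpha,(0,\mathcal{Y}))$, analogous to the one referenced in \Cref{subsec:c-s-ext}, this makes $a$ continuous, symmetric, and coercive. By Lax-Milgram, the solution operator $T: L^2(y^\alpha,(0,\mathcal{Y})) \to H^1_{\mathcal{Y}}(y^\alpha,(0,\mathcal{Y}))$, defined by $a(Tf,w) = (f,w)_{L^2(y^\alpha)}$ for all test functions $w$, is well-defined and, after composition with the inclusion back into $L^2(y^\alpha)$, is bounded, self-adjoint and positive. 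The crucial ingredient is the compact embedding $H^1_{\mathcal{Y}}(y^\alpha,(0,\mathcal{Y})) \hookrightarrow L^2(y^\alpha,(0,\mathcal{Y}))$, which follows from a weighted Rellich--Kondrachov result for $A_2$ weights.

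With $T$ compact, self-adjoint and positive, the spectral theorem produces a non-increasing sequence of positive eigenvalues $\nu_k \to 0$ with eigenfunctions $\{\psi_k\}_{k \geq 1}$ forming an $L^2(y^\alpha,(0,\mathcal{Y}))$-orthonormal basis. Setting $\mu_k = 1/\nu_k$ gives the eigenvalues of \eqref{eq:eigen-bvp}, reordered so that $\mu_k \to \infty$. Identity \eqref{eq:orthogonal-eigen} is then immediate: testing the weak form for $\psi_j$ against $\psi_k$ and invoking the $L^2(y^\alpha)$-orthonormality of the eigenfunctions gives $\int_0^{\mathcal{Y}} y^\alpha \psi_j' \psi_k'\, dy = \mu_j \delta_{jk}$. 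For completeness in $H^1_{\mathcal{Y}}(y^\alpha,(0,\mathcal{Y}))$, given $v$ in that space, I would expand $v = \sum_k c_k \psi_k$ in $L^2(y^\alpha)$, then use \eqref{eq:orthogonal-eigen} to verify that the partial sums are Cauchy in the norm induced by $a$, hence converge to $v$ in $H^1_{\mathcal{Y}}(y^\alpha,(0,\mathcal{Y}))$; orthogonality in that space is a direct consequence of \eqref{eq:orthogonal-eigen}. Finally, simplicity of $\mu_1$ (the strict inequality $\mu_1<\mu_2$) follows from a Courant min-max characterization combined with the standard trick that $|\psi_1|$ is also a minimizer, hence an eigenfunction, and must be strictly positive in $(0,\mathcal{Y})$ by a strong maximum principle for the weighted operator.

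The main obstacle is that every standard Hilbert-space fact one wants to invoke, namely the Poincar\'e inequality, the compact Sobolev embedding, and the strong maximum principle, must be justified in the \emph{weighted} setting with the degenerate/singular weight $y^\alpha$. None of these is novel: they are all standard consequences of $y^\alpha$ being an $A_2$ weight and are available in the weighted Sobolev space literature, so the proof ultimately consists of assembling these ingredients rather than proving any of them from scratch. An alternative, more explicit route would be to write the eigenfunctions in closed form using Bessel functions $y^s J_{-s}(\sqrt{\mu_k}\, y)$ and characterize the $\mu_k$ as zeros of an explicit transcendental equation; this is worth keeping in mind since later sections rely on analytic solutions, but for the qualitative statement of this theorem the abstract spectral route is cleaner.
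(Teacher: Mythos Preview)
Your proposal is correct and complete. The route you take differs from the paper's, though both are standard. The paper frames \eqref{eq:eigen-bvp} as a singular Sturm--Liouville problem and appeals directly to the Rayleigh-quotient/variational characterization of eigenvalues, citing a PDE textbook (Strauss) for positivity of $\mu_1$, for the successive constrained minimizations that produce the higher eigenpairs and their $L^2(y^\alpha)$-orthogonality, and for completeness; the identity \eqref{eq:orthogonal-eigen} is then obtained, exactly as you do, by testing the equation for $\psi_j$ against $\psi_k$. Your approach instead builds the compact, self-adjoint, positive solution operator $T$ via Lax--Milgram and a weighted Rellich--Kondrachov embedding, then invokes the abstract spectral theorem. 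What your route buys is explicitness about \emph{why} the weighted analysis goes through---you name the $A_2$ property of $y^\alpha$ as the mechanism behind Poincar\'e, compactness, and the maximum principle---whereas the paper's textbook citations are for the unweighted case and leave the adaptation to the singular weight implicit. Conversely, the paper's min--max route is slightly more elementary in that it avoids constructing $T$ and the compact-embedding machinery. Your remark that the explicit Bessel-function representation is an alternative is apt: the paper in fact computes those eigenpairs immediately afterward, so either qualitative argument would suffice here.
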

\begin{proof}
Notice that this ODE with the given boundary conditions is a singular Sturm-Liouville problem.
Positivity of the first eigenvalue $\mu_1$ is a result of minimizing the Rayleigh quotient over trial functions subject to the boundary conditions
in \eqref{eq:eigen-bvp}. A proof of this can be found in \cite[Theorem 1, Section 11.1]{strauss2007partial} and requires only accounting for the
boundary conditions. The remaining eigenvalues can be found in an analogous manner to \cite[Theorem 2, Section 11.1]{strauss2007partial},
which establishes the orthogonality of the eigenfunctions with respect to $y^\alpha$ as a constraint in minimizing the Rayleigh quotient.
The eigenfunctions can be seen to be complete in the weighted $L^2$ sense as in \cite[Theorem 2, Section 11.3]{strauss2007partial}. Equation
\eqref{eq:orthogonal-eigen} can be seen by integrating the ODE against another eigenfunction
\[
  \int_0^{\mathcal{Y}} y^\alpha \psi_j'(y) \psi_k'(y)\, dy = \int_0^{\mathcal{Y}} -(y^\alpha \psi_j'(y))'\psi_k(y) \, dy = \mu_j \int_0^{\mathcal{Y}} y^\alpha \psi_j (y) \psi_k(y) \, dy = \mu_j \delta_{jk} \, ,
\]
showing that $\{ \psi_k \}_{k \geq 1}$ is an orthogonal basis of $H_{\mathcal{Y}}^1( y^\alpha, (0,\mathcal{Y}))$.
\end{proof}

It is now our intention to find the exact solutions to this problem. Expanding the ordinary differential equation we arrive at
\begin{equation}
 \label{eq:eigen-ode}
 \psi''(y) + \alpha y^{-1} \psi'(y) + \mu \psi(y) = 0 \,\, , \,\, y \in (0,\mathcal{Y}) \, .
\end{equation}
The solutions to this equation behave differently depending on the value of $s$.

\begin{lemma}[eigenpairs for $s = \tfrac12$]
Let $s = \tfrac12$. The solutions to \eqref{eq:eigen-bvp} are given by
\begin{equation}
\label{eq:evs-one-half}
\psi_k(y) = \sqrt{\frac{2}{\mathcal{Y}}} \cos (\mu_k y) \quad \text{and} \quad \mu_k = \left( \frac{ (k-\tfrac12) \pi}{\mathcal{Y}} \right)^2 \, , \quad \forall k\in\mathbb{N} \, .
\end{equation}
\end{lemma}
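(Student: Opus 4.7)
The plan is to exploit the fact that $s = \tfrac12$ collapses the weight: by \eqref{eq:define-alpha}, $\alpha = 1 - 2s = 0$, so $y^\alpha \equiv 1$. The ODE in \eqref{eq:eigen-ode} then reduces to the elementary constant-coefficient equation $\psi''(y) + \mu \psi(y) = 0$ on $(0,\mathcal{Y})$, and the weighted boundary condition $-y^\alpha \psi'(y) \to 0$ as $y \downarrow 0$ becomes the standard Neumann condition $\psi'(0) = 0$. Together with $\psi(\mathcal{Y}) = 0$, this is a textbook Sturm--Liouville problem, and I would simply solve it by hand.

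First, since \Cref{thm:eigenpairsyalpha} already guarantees that every eigenvalue is strictly positive, I may write $\mu = \omega^2$ with $\omega > 0$. The general solution of $\psi'' + \omega^2 \psi = 0$ is $\psi(y) = A\cos(\omega y) + B\sin(\omega y)$. Imposing $\psi'(0) = 0$ forces $B\omega = 0$, hence $B = 0$, leaving $\psi(y) = A\cos(\omega y)$. Imposing $\psi(\mathcal{Y}) = 0$ then yields the characteristic equation $\cos(\omega \mathcal{Y}) = 0$, whose positive roots are $\omega_k \mathcal{Y} = (k-\tfrac12)\pi$ for $k \in \mathbb{N}$. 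Squaring gives the claimed formula for $\mu_k$.

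To fix the constant $A$, I would normalize in $L^2(y^\alpha,(0,\mathcal{Y})) = L^2((0,\mathcal{Y}))$. Using $\cos(2\omega_k y) = 2\cos^2(\omega_k y) - 1$ and the fact that $\sin(2\omega_k \mathcal{Y}) = \sin((2k-1)\pi) = 0$, the integral $\int_0^{\mathcal{Y}} \cos^2(\omega_k y)\,dy$ collapses to $\mathcal{Y}/2$, so normalization gives $A = \sqrt{2/\mathcal{Y}}$, matching \eqref{eq:evs-one-half}.

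The only real subtlety—rather than an obstacle—is ensuring I have found \emph{all} eigenpairs rather than just a family. That completeness is not something I have to re-prove: \Cref{thm:eigenpairsyalpha} already asserts a countable, strictly increasing, unbounded sequence of positive eigenvalues, so once my candidates $\{\omega_k^2\}_{k\geq 1}$ exhaust all positive roots of $\cos(\omega \mathcal{Y}) = 0$, they must coincide with the full spectrum. Thus the argument reduces to the routine ODE computation above.
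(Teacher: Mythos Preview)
Your proof is correct and follows exactly the approach the paper takes: once $\alpha = 0$, equation \eqref{eq:eigen-ode} becomes a constant-coefficient linear ODE solved ``by elementary means,'' which is literally all the paper's proof says. Your explicit computation in fact yields $\psi_k(y) = \sqrt{2/\mathcal{Y}}\,\cos(\sqrt{\mu_k}\,y)$, quietly correcting the typo $\cos(\mu_k y)$ in the displayed formula \eqref{eq:evs-one-half}.
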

\begin{proof}
Since $\alpha = 0$, equation \eqref{eq:eigen-ode} becomes a linear second order ODE with constant coefficients. Its solution is carried out by elementary means.
\end{proof}

On the other hand, the case of $s\neq \tfrac12$ requires some work.

\begin{lemma}[eigenpairs for $s \neq \tfrac12$]
Let $s \neq \tfrac12$. The solutions to \eqref{eq:eigen-bvp} are given by
\begin{equation*}
 \label{eq:efs-other}
 \psi_k(y) = \left( \frac{\sqrt{2}}{ \mu_k^{s/2} \mathcal{Y} J_{1-s}(\eta_k) } \right) (y \sqrt{\mu_k})^s J_{-s} (y \sqrt{\mu_k}) \,\, , \,\, k \in \mathbb{N} \, ,
\end{equation*}
where $\eta_k$ is the $k^{\text{th}}$ positive root of $J_{-s}$ and the eigenvalues are given by
\begin{equation}
 \label{eq:evs-other}
 \mu_k = \eta_k^2 \mathcal{Y}^{-2} 
\end{equation}
\end{lemma}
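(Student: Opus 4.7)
The plan is to reduce \eqref{eq:eigen-ode} to Bessel's equation, identify the two linearly independent solutions, enforce the two boundary conditions, and finally compute the normalization constant using a standard integral identity for Bessel functions.

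First, I would make the ansatz $\psi(y) = y^s \phi(y)$ in \eqref{eq:eigen-ode}. A direct computation, using $\alpha = 1-2s$, shows that the coefficient of $\phi$ collapses neatly and $\phi$ satisfies
\begin{equation*}
  y^2 \phi''(y) + y \phi'(y) + (\mu y^2 - s^2) \phi(y) = 0 \, .
\end{equation*}
After the rescaling $z = y\sqrt{\mu}$ this is precisely Bessel's equation of order $s$. Since $s \in (0,1)$ with $s \neq \tfrac12$ is not an integer, $J_s$ and $J_{-s}$ form a fundamental system, so the general solution of \eqref{eq:eigen-ode} is
\begin{equation*}
  \psi(y) = C_1 \, y^s J_s(y\sqrt{\mu}) + C_2 \, y^s J_{-s}(y\sqrt{\mu}) \, .
\end{equation*}

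Next I would impose the left boundary condition $-y^\alpha \psi'(y) \to 0$ as $y \downarrow 0$. Using the series $J_\nu(z) \sim \frac{1}{\Gamma(\nu+1)}(z/2)^\nu$ at the origin, one sees that $y^s J_s(y\sqrt{\mu}) = O(y^{2s})$ while $y^s J_{-s}(y\sqrt{\mu})$ is an analytic function of $y^2$ starting from a nonzero constant. Differentiating and multiplying by $y^\alpha = y^{1-2s}$ yields
\begin{equation*}
  y^\alpha \frac{d}{dy}\bigl[y^s J_s(y\sqrt{\mu})\bigr] = O(1),
  \qquad
  y^\alpha \frac{d}{dy}\bigl[y^s J_{-s}(y\sqrt{\mu})\bigr] = O(y^{2(1-s)}) \, .
\end{equation*}
Since $s<1$, only the second contribution vanishes at the origin, so the boundary condition forces $C_1 = 0$. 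The right boundary condition $\psi(\mathcal{Y})=0$ then becomes $J_{-s}(\mathcal{Y}\sqrt{\mu})=0$, which gives $\mathcal{Y}\sqrt{\mu_k} = \eta_k$ and hence \eqref{eq:evs-other}. The corresponding eigenfunctions are proportional to $(y\sqrt{\mu_k})^s J_{-s}(y\sqrt{\mu_k})$.

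Finally, the constant in front of $\psi_k$ is fixed by requiring $\|\psi_k\|_{L^2(y^\alpha,(0,\mathcal{Y}))} = 1$, which is the normalization that accompanies the orthogonality relation from \Cref{thm:eigenpairsyalpha}. Substituting and using $y^\alpha y^{2s} = y$, the problem reduces after the change of variables $z = y\sqrt{\mu_k}$ to evaluating $\int_0^{\eta_k} z J_{-s}(z)^2 \, dz$. Applying the classical Bessel identity $\int_0^\eta z J_\nu(z)^2 \, dz = \tfrac{\eta^2}{2}[J_\nu'(\eta)]^2$ at a zero of $J_\nu$, together with the recurrence $J_\nu'(z) = \frac{\nu}{z}J_\nu(z) - J_{\nu+1}(z)$ (which at a zero of $J_{-s}$ gives $J_{-s}'(\eta_k) = -J_{1-s}(\eta_k)$), yields exactly the normalization $\sqrt{2}/(\mu_k^{s/2}\mathcal{Y} J_{1-s}(\eta_k))$ claimed in the statement.

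The main subtleties I expect are (i) verifying carefully that $J_{1-s}(\eta_k) \neq 0$, so that $\eta_k$ is a simple zero and the normalization constant is well defined (this follows because $J_{-s}$ and $J_{1-s}$ cannot share a zero by the Wronskian/recurrence identities), and (ii) the asymptotic analysis at $y=0$, which has to be carried out carefully since the weight $y^\alpha$ is singular precisely where the solutions can have non-smooth behavior; this is what ultimately selects the $J_{-s}$ branch. Everything else reduces to standard manipulations with Bessel functions.
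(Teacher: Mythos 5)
Your proof is correct, and it takes a cleaner route than the paper's at the key step of selecting the correct branch of the general solution. The paper writes the general solution of the Bessel equation as $w(z) = A J_s(z) + B Y_s(z)$ (first and second kind), enforces the Neumann condition at $y=0$ to get a linear relation between $A$ and $B$, and then must pass through a ${}_0F_1$ hypergeometric representation before finally recognizing the answer as a multiple of $J_{-s}$. You instead exploit from the outset that $s \in (0,1) \setminus \{\tfrac12\}$ is not an integer, so $\{J_s, J_{-s}\}$ is already a fundamental system; the Neumann condition then kills the $J_s$ branch in one line (your $O(1)$ versus $O(y^{2(1-s)})$ comparison is exactly right), and $J_{-s}$ falls out directly with no hypergeometric detour. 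Your argument also carries out the normalization explicitly via $\int_0^{\eta} z J_\nu(z)^2\,dz = \tfrac{\eta^2}{2}[J_\nu'(\eta)]^2$ and $J_{-s}'(\eta_k) = -J_{1-s}(\eta_k)$, whereas the paper merely asserts that a normalizing constant $A_k$ is chosen so $\|\psi_k\|_{L^2(y^\alpha,(0,\mathcal{Y}))}=1$ without computing it; your explicit calculation is worth keeping since the resulting constant is exactly what is quoted in the lemma and is used downstream (e.g., in \Cref{subsec:error-not-one-half}). Both approaches arrive at the same eigenpairs; yours buys simplicity in the branch selection and completeness in the normalization, at no cost.
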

\begin{proof}
We propose the ansatz $\psi(y)=z^s w(z)$, with $z = \sqrt{\mu} y$. The ODE becomes
\begin{equation*}
\label{eq:bessels-eq}
 z^s w''(z) + z w'(z) + (z^2 - s^2)w(z) = 0 \, .
\end{equation*}
This is a Bessel equation \cite[Section 9.1]{abramowitz1988handbook}, with general solution \cite[Section 10.2(ii)]{NIST:DLMF}
\begin{equation*}
 w(z) = A J_s (z) + B Y_s(z) \, .
\end{equation*}
$A,~B$ are constants; $J_s$ and $Y_s$ are the Bessel functions of the first and second kind.

We must now satisfy the boundary conditions. Substituting back that $\sqrt{\mu}y=z$, the Neumann boundary condition at $y=0$ becomes
\begin{align*}
  -\lim_{y \downarrow 0} y^\alpha \psi'(y) &= -\lim_{y \downarrow 0} y^\alpha \frac{d}{dy} \bigg[ (\sqrt{\mu} y)^s \big( A J_s(\sqrt{\mu}y) + B Y_s (\sqrt{\mu}y) \big)  \bigg] \\
   &= \frac{2^{1-s} \mu^s \big( A\pi + B\cos(\pi s) \Gamma(1-s) \Gamma(s) \big) }{\pi \Gamma(s)} = 0 \, ,
\end{align*}
i.e.,
$
  A\pi + B\cos( \pi s) \Gamma(1-s) \Gamma(s) = 0 \, .
$
The Dirichlet boundary condition implies that
\[
  AJ_s(\sqrt{\mu}\mathcal{Y}) + B Y_s(\sqrt{\mu}\mathcal{Y})=0 \, .
\]
Expressing $B$ in terms of $A$, it can be shown that
\begin{equation*}
 \psi(y ) = \frac{2^s A }{\Gamma(1-s) \cos( \pi s)} {}_0F_1 (0,1-s ; -\mu y^2 / 4) \, ,
\end{equation*}
where ${}_0F_1$ is the hypergeometric function. This exotic function can be expressed in terms of the
Bessel function of the first kind by the relation \cite[Equation (9.1.69)]{abramowitz1988handbook}
\begin{equation*}
 J_{-s} (\sqrt{\mu} y) = \frac{ \left( \frac{\sqrt{\mu} y}{2} \right)^{-s}}{\Gamma(1-s)} {}_0F_1 (0,1-s; -\mu y^2/4) \, .
\end{equation*}
So we may write
\begin{equation*}
 \psi(y) = \frac{A}{\cos(\pi s)} \left( \sqrt{\mu} y\right)^s J_{-s}(\sqrt{\mu} y)\, .
\end{equation*}
We can now choose the eigenvalue $\mu$ so that
$
  J_{-s}(\sqrt{\mu}\mathcal{Y}) = 0
$,
i.e., $\sqrt{\mu_k}\mathcal{Y} = \eta_k$. Finally, we choose the coefficient $A_k$ so that $\| \psi_k \|_{L^2(y^\alpha , (0,\mathcal{Y}))}^2=1$.
\end{proof}

According to \eqref{eq:sol-rep}, the eigenfunctions only need to be evaluated at $y=0$.

\begin{corollary}[value at $y=0$]
If $\{(\mu_k,\psi_k)\}_{k=1}^\infty$ solve \eqref{eq:eigen-bvp}, then
\begin{equation}
 \label{eq:efs-one-half-zero}
 \psi_k(0) = \sqrt{ \frac{2}{ \mathcal{Y} } } \,  \qquad \text{ when } \qquad s = \frac12 \, ,
\end{equation}
and
\begin{equation}
 \label{eq:efs-other-zero}
 \psi_k(0) = \frac{ 2^{s+1/2} }{\mu_k^{s/2} \mathcal{Y} J_{1-s}(\eta_k) \Gamma(1-s) } \, \qquad \text{ when } \qquad s \neq \frac12 \, .
\end{equation}
\end{corollary}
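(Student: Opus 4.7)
The statement is a direct evaluation of the eigenfunction formulas from the two preceding lemmas at the point $y=0$, so my plan is simply to substitute and compute, treating the two cases separately.

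For $s=\tfrac12$, the formula \eqref{eq:evs-one-half} gives $\psi_k(y) = \sqrt{2/\mathcal{Y}}\,\cos(\sqrt{\mu_k}\,y)$, and substituting $y=0$ yields \eqref{eq:efs-one-half-zero} immediately from $\cos 0 = 1$.

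For $s\neq\tfrac12$, the eigenfunction contains the factor $(y\sqrt{\mu_k})^{s} J_{-s}(y\sqrt{\mu_k})$, which is the only $y$-dependent piece. The plan is to compute its limit as $y\downarrow 0$ using the standard power-series representation of the Bessel function of the first kind \cite[Eq.~(9.1.10)]{abramowitz1988handbook},
\begin{equation*}
  J_{-s}(z) = \sum_{m=0}^{\infty} \frac{(-1)^m}{m!\,\Gamma(m-s+1)}\left(\frac{z}{2}\right)^{2m-s},
\end{equation*}
whose $m=0$ term dominates near the origin and gives $J_{-s}(z) \sim \frac{1}{\Gamma(1-s)}(z/2)^{-s}$ as $z\downarrow 0$. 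Multiplying by $z^s$ cancels the singularity, producing
\begin{equation*}
  \lim_{z\downarrow 0} z^{s} J_{-s}(z) \;=\; \frac{2^{s}}{\Gamma(1-s)}.
\end{equation*}
Applying this with $z = y\sqrt{\mu_k}$ to the eigenfunction formula and combining the prefactor $\sqrt{2}/(\mu_k^{s/2}\mathcal{Y} J_{1-s}(\eta_k))$ with $2^{s}/\Gamma(1-s)$ yields $2^{s+1/2}/(\mu_k^{s/2}\mathcal{Y} J_{1-s}(\eta_k)\Gamma(1-s))$, which is \eqref{eq:efs-other-zero}.

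There is no real obstacle here; the only thing to be careful about is the correct identification of the $m=0$ term in the series for $J_{-s}$, since the negative index produces a factor $z^{-s}$ that must precisely cancel the $z^s$ prefactor in the ansatz. Once that cancellation is performed, the remaining constants are collected and the normalization $\sqrt{2}$ in the numerator combines with $2^s$ to give $2^{s+1/2}$, completing the computation.
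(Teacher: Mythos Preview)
Your proof is correct and follows essentially the same approach as the paper: the case $s=\tfrac12$ is handled by direct substitution, and for $s\neq\tfrac12$ both you and the paper use the small-argument behavior $J_{-s}(z)\sim (z/2)^{-s}/\Gamma(1-s)$ to evaluate the limit. The only cosmetic difference is that you derive this asymptotic from the series expansion \cite[Eq.~(9.1.10)]{abramowitz1988handbook}, whereas the paper simply cites it as \cite[Eq.~(9.1.7)]{abramowitz1988handbook}.
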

\begin{proof}
The case $s = \tfrac12$ is immediate.

If $s \neq \tfrac12$ the asymptotic for Bessel functions \cite[Equation (9.1.7)]{abramowitz1988handbook} is
\begin{equation*}
 J_{-s}(z) \sim \frac{1}{\Gamma(1-s)} \left( \frac{2}{z} \right)^s \, .
\end{equation*}
We use this to take the limit $y \downarrow 0$.
\end{proof}

Let us then summarize our approach. Given $f \in \mathbb{H}^{-s}(\Omega)$, we choose a finite element space $\mathbb{V}_h$.
We solve, for $k =1, \ldots, \mathcal{K}$, problem \eqref{eq:FindTheUks} where the constant $d_s$ is defined in \eqref{eq:define-ds}, $v_k(0) = \psi_k(0)$, and the pair $(\mu_k,\psi_k(0))$
is given by either \eqref{eq:evs-one-half} and \eqref{eq:efs-one-half-zero} or by \eqref{eq:evs-other} and \eqref{eq:efs-other-zero}. Finally, the approximate solution is
\[
  u_{h,\mathcal{Y}}^{\mathcal{K}} = \sum_{k=1}^{\mathcal{K}} \psi_k(0)U_k \in \mathbb{V}_h \, .
\]
The main computational appeal of the diagonalization developed in \cite{banjai2019tensor} remains: problems \eqref{eq:FindTheUks}
can be solved in parallel. In addition, the values of $\mu_k$ and $\psi_k(0)$ are now given explicitly. No unstable eigenvalue problem needs to be solved.

\section{Error Analysis}
\label{sec:quadrature-error}

Let us derive an error estimate for the semi-analytic diagonalization scheme. In order to do this, in what follows we shall assume
that $f \in L^2(\Omega)$.

\subsection{The discrete Laplacian}
\label{sub:Deltah}

We introduce the discrete Laplacian 
\[
  \Delta_h : \mathbb{V}_h \to \mathbb{V}_h
  \qquad
  \int_\Omega \Delta_h W V dx = - \int_\Omega \nabla W \cdot \nabla V dx, \quad \forall \, V,W \, \in \mathbb{V}_h \, .
\]
It is an invertible, symmetric, negative definite operator. 
Let $M = \dim \mathbb{V}_h$. We have:
\begin{enumerate}[1.]
  \item There are $\{ (\lambda_{h,m}, \Phi_{h,m}) \}_{m=1}^M \subset \mathbb{R}_+ \times \mathbb{V}_h$ such that,
  \[
    \int_\Omega (-\Delta_h) \Phi_{h,m} V dx = \int_\Omega \nabla \Phi_{h,m} \cdot \nabla V dx = \lambda_{h,m} \int_\Omega \Phi_{h,m} V dx, \quad \forall V \in \mathbb{V}_h \, .
  \]
  
  \item The eigenvectors are a basis of $\mathbb{V}_h$, and can be chosen to be orthonormal in $L^2(\Omega)$.
  
  \item The eigenvalues satisfy the bounds
  \begin{equation}\label{eq:discrete-eigenvals-bounds}
    C_P^{-1} \leq \lambda_{h,1} < \lambda_{h,2} \leq \cdots \leq \lambda_{h,M} \lesssim h^{-2}\, ,
  \end{equation}
  where $C_P$ is the best constant in the Poincar\'e inequality in $\Omega$. The lower bound follows
  from an application of the Rayleigh quotient. For the upper bound see \cite[Estimate (3.28)]{thomee2007galerkin}
  with $\beta=2$ and $\chi = \Phi_{h,M}$, where $\Phi_{h,M}$. 

  \item The fractional powers $(-\Delta_h)^{r}$, for $r \in \mathbb{R}$, are given by standard spectral theory.
\end{enumerate}

From these properties we have that, first, if $P_h$ is the $L^2(\Omega)$--projection onto $\mathbb{V}_h$,
\begin{equation}
\label{eq:LaphL2normestimate}
 \| (-\Delta_h) P_h\|_{L^2(\Omega) \to L^2(\Omega)}  \lesssim \frac{1}{h^2} \, .
\end{equation}
Next, since from the onset we have assumed that our domain is a convex polytope, we have that if $w \in H^1_0(\Omega)$ is such that $-\Delta w = g \in L^2(\Omega)$, then $w \in H^2(\Omega) \cap H^1_0(\Omega)$. Therefore, an application of Aubin-Nitsche duality \cite[Lemma 2.31]{ern2004theory} implies that,
\begin{equation*}
 \| w - w_{h} \|_{L^2(\Omega)} \lesssim h^2 |w|_{H^2(\Omega)} \lesssim h^2 \| g \|_{L^2(\Omega)} \, ,
\end{equation*}
where $w_{h} = (-\Delta_h)^{-1} P_h g$ is the Galerkin approximation to $w$. In operator terms:
\begin{equation}
\label{eq:OperatorError}
  \|  (-\Delta)^{-1} - (-\Delta_h)^{-1}  P_h \|_{L^2(\Omega) \to L^2(\Omega) } \lesssim h^2.
\end{equation}

\subsection{Error decomposition}

The solution to \eqref{eq:FindTheUks} can be written as
\begin{equation*}
 U_{k} = d_s \psi_k(0) (\mu_k \mathrm{I} - \Delta_h)^{-1} P_h f \, ,
\end{equation*}
where $\mathrm{I}$ is the identity operator and $P_h$ is the $L^2(\Omega)$--projection onto $\mathbb{V}_h$. Then,
\begin{equation}
 \label{eq:formal-solution-sum}
 u_{h,\mathcal{Y}}^{\mathcal{K}} = d_s \sum_{k=1}^{\mathcal{K}} |\psi_k(0)|^2 (\mu_k \mathrm{I} -\Delta_h)^{-1} P_h f \, .
\end{equation}
With this at hand we begin the error analysis by writing
\begin{equation}
 \label{eq:error-estimate}
 \| u - u_{h,\mathcal{Y}}^{\mathcal{K}} \|_{L^2(\Omega)} \leq \| u-u_h \|_{L^2(\Omega)} + \| u_h-u_{h,\mathcal{Y}}^\infty\|_{L^2(\Omega)}
   + \| u_{h,\mathcal{Y}}^\infty - u_{h,\mathcal{Y}}^{\mathcal{K}} \|_{L^2(\Omega)} \, .
\end{equation}
Here $u_h = (-\Delta_h)^{-s} P_h f$ and, as the notation suggests,
\[
  u_{h,\mathcal{Y}}^{\infty} = d_s \sum_{k=1}^\infty |\psi_k(0)|^2 (\mu_k \mathrm{I} -\Delta_h)^{-1} P_h f \, .
\]
For reasons that shall become clear during the course of our discussion, we shall call the first term on the right hand side of \eqref{eq:error-estimate} the
discretization error, the second one the quadrature error, whereas the last one we name the truncation error. Each of the three contributions of the error are
studied below.

\subsubsection{Discretization error}
We observe that \eqref{eq:LaphL2normestimate} and \eqref{eq:OperatorError} are the conditions of \cite[Theorem 1]{matsuki1993note}, and so we conclude that
\begin{equation}
 \label{eq:discretization-error}
 \| u - u_h \|_{L^2(\Omega)} \lesssim h^{2s} \| f \|_{L^2(\Omega)} \, .
\end{equation}

\subsubsection{Quadrature error: The Balakrishnan formula}

We recall the Balakrishnan formula for powers of positive operators; see \cite[Chapter 4]{lunardi2009interpolation} for the general
theory, and \cite{bonito2015numerical} and \cite{bonito2018numerical} for its application to the fractional Laplacian. As stated above, the operator $-\Delta_h$
is positive definite and symmetric. Therefore,
\begin{equation}
 \label{eq:balakrishnan}
 (-\Delta_h)^{-s} = \frac{2 \sin(\pi s)}{\pi} \int_0^\infty t^\alpha (t^2 \mathrm{I} - \Delta_h)^{-1} \, dt \, .
\end{equation}
We interpret \eqref{eq:formal-solution-sum} as a quadrature formula for the operator-valued weighted integral
\begin{equation*}
 \int_0^\infty t^\alpha F_h(t) \, dt \, , \qquad F_h(t) = \frac{2\sin(\pi s)}{\pi} (t^2\mathrm{I} - \Delta_h)^{-1} \, .
\end{equation*}

The nodes $\{t_k^{(s)}\}_{k=1}^\mathcal{K}$ and weights $\{\omega_k^{(s)}\}_{k=1}^\mathcal{K}$ of the quadrature scheme are dependent on the value of $s$. The specific values for the two cases, $s=\tfrac12$ and $s \neq \tfrac12$, will be given in \Cref{subsec:error-one-half} and \Cref{subsec:error-not-one-half}, respectively.

It is now clear why we name the terms on the right hand side as such. The difference $u_h - u_{h,\mathcal{Y}}^\infty$ encodes the error in
approximating the integral in \eqref{eq:balakrishnan} with a quadrature formula with an infinite number of nodes and weights. Finally, $u_{h,\mathcal{Y}}^\infty - u_{h,\mathcal{Y}}^{\mathcal{K}}$ measures the effect of truncating the quadrature rule
and only accounting for $\mathcal{K}$ nodes.

Let us introduce some notation. Given $\lambda >0$, we define the function
\[
  F_\lambda(t) = \frac{2 \sin(\pi s)}\pi\frac1{t^2+\lambda} \, .
\]
The integral and quadrature formulas of interest are then denoted by
\begin{equation}
\label{eq:Quad-Fla}
 I_s(F_\lambda) = \int_{0}^{\infty} t^{1-2s} F_\lambda(t)\, dt \, ,
  \qquad \qquad
 Q_s^{\mathcal{K}}(F_\lambda) = \sum_{k=1}^{\mathcal{K}} (t_k^{(s)})^{1-2s} \omega_k^{(s)} F_\lambda(t_k^{(s)}) \, ,
\end{equation}
respectively. The symbol $Q_s^\infty(F_\lambda)$ has then the expected meaning.

The main error estimate of our work is the content of the next result.

\begin{theorem}[error estimate]
\label{thm:convergence-rate-generic}
Let $\Omega \subset \mathbb{R}^d$ be a bounded, convex polytope and $f \in L^2(\Omega)$. Let, for $s \in (0,1)$,
$u \in \mathbb{H}^s(\Omega)$ be the solution to \eqref{eq:pde-problem} and $u_{h,\mathcal{Y}}^{\mathcal K} \in \mathbb{V}_h$ be defined in \eqref{eq:formal-solution-sum}. Denote $e = \| u - u_{h,\mathcal{Y}}^{\mathcal{K}}\|_{L^2(\Omega)}$. Then, we have
\[
e \lesssim  \left[
     h^{2s} 
     + \sup_{\lambda \geq \frac1{C_P}} \left| I_s(F_\lambda) - Q_s^{\infty}(F_\lambda) \right| 
+
     \sup_{\lambda \geq \frac1{C_P}}  \left| \sum_{k> \mathcal{K}} (t_k^{(s)})^{1-2s} \omega_k^{(s)} F_\lambda(t_k^{(s)})  \right|
 \right] \| f \|_{L^2(\Omega)} \, ,
\]
where the implicit constant is independent of $\mathcal Y$, $\mathcal K$, $h$, and $f$.
\end{theorem}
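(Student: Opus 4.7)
The plan is to start from the three-term decomposition \eqref{eq:error-estimate} and bound each summand separately. The first, purely spatial, contribution is already handled by \eqref{eq:discretization-error}, yielding the $h^{2s}\|f\|_{L^2(\Omega)}$ term immediately. The remaining two contributions will be recast through the spectral resolution of $-\Delta_h$ so that each reduces to a scalar quadrature error for $I_s(F_\lambda)$ evaluated at $\lambda = \lambda_{h,m}$, which can then be bounded uniformly using the lower bound $\lambda_{h,m} \geq 1/C_P$ from \eqref{eq:discrete-eigenvals-bounds}.

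Concretely, I would expand $P_h f = \sum_{m=1}^M f_{h,m} \Phi_{h,m}$ with $f_{h,m} = \int_\Omega f \Phi_{h,m}\, dx$. Spectral calculus for $-\Delta_h$ then gives
\[
 u_h = \sum_{m=1}^M \lambda_{h,m}^{-s} f_{h,m} \Phi_{h,m},\qquad (\mu_k \mathrm{I} - \Delta_h)^{-1} P_h f = \sum_{m=1}^M \frac{f_{h,m}}{\mu_k + \lambda_{h,m}}\, \Phi_{h,m},
\]
and testing \eqref{eq:balakrishnan} against $\Phi_{h,m}$ produces the scalar identity $\lambda_{h,m}^{-s} = I_s(F_{\lambda_{h,m}})$. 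On the quadrature side, once the nodes and weights $\{(t_k^{(s)}, \omega_k^{(s)})\}$ are identified so that $\mu_k = (t_k^{(s)})^2$ and the coefficient $d_s|\psi_k(0)|^2$ appearing in \eqref{eq:formal-solution-sum} matches $\tfrac{2\sin(\pi s)}{\pi}(t_k^{(s)})^{1-2s}\omega_k^{(s)}$, the representation \eqref{eq:formal-solution-sum} diagonalizes as
\[
 u_{h,\mathcal{Y}}^\infty = \sum_{m=1}^M Q_s^\infty(F_{\lambda_{h,m}})\, f_{h,m}\, \Phi_{h,m},\qquad u_{h,\mathcal{Y}}^\mathcal{K} = \sum_{m=1}^M Q_s^\mathcal{K}(F_{\lambda_{h,m}})\, f_{h,m}\, \Phi_{h,m}.
\]

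With these representations in hand, Parseval's identity in the $L^2(\Omega)$-orthonormal basis $\{\Phi_{h,m}\}$ converts the last two terms of \eqref{eq:error-estimate} into weighted $\ell^2$-sums against $\{|f_{h,m}|^2\}$ of the scalar differences $I_s(F_{\lambda_{h,m}}) - Q_s^\infty(F_{\lambda_{h,m}})$ and $Q_s^\infty(F_{\lambda_{h,m}}) - Q_s^\mathcal{K}(F_{\lambda_{h,m}}) = \sum_{k>\mathcal{K}}(t_k^{(s)})^{1-2s}\omega_k^{(s)} F_{\lambda_{h,m}}(t_k^{(s)})$, respectively. Invoking $\lambda_{h,m} \geq 1/C_P$ I then replace each scalar factor by its supremum over $\lambda \geq 1/C_P$, pull that constant out of the sum, and close the argument using $\sum_m |f_{h,m}|^2 = \|P_h f\|_{L^2(\Omega)}^2 \leq \|f\|_{L^2(\Omega)}^2$. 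The three bounds are assembled by the triangle inequality, producing the stated estimate with implicit constants independent of $h$, $\mathcal{Y}$, $\mathcal{K}$, and $f$.

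The step I expect to be the main obstacle is not analytic but rather the bookkeeping required to identify the semi-analytic solution \eqref{eq:formal-solution-sum} with an operator-valued quadrature of the form \eqref{eq:Quad-Fla}. Specifically, one must verify that the eigenpairs produced by \Cref{thm:eigenpairsyalpha}, together with the values of $\psi_k(0)$ in \eqref{eq:efs-one-half-zero} and \eqref{eq:efs-other-zero}, yield well-defined nodes $t_k^{(s)} = \sqrt{\mu_k}$ and weights $\omega_k^{(s)}$ that reproduce the coefficient $d_s|\psi_k(0)|^2$ with the correct prefactor $\tfrac{2\sin(\pi s)}{\pi}$. Because the closed forms of $\psi_k(0)$ differ in the two regimes, this identification will naturally split along $s = \tfrac12$ versus $s \neq \tfrac12$, which is presumably the reason \Cref{subsec:error-one-half} and \Cref{subsec:error-not-one-half} are organized accordingly. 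Once that identification is in place, the rest is the clean spectral argument sketched above.
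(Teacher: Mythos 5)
Your proposal is correct and follows essentially the same route as the paper: the discretization term is dispatched by \eqref{eq:discretization-error}, the spectral expansion of $P_h f$ in the $L^2(\Omega)$-orthonormal eigenbasis of $-\Delta_h$ converts the remaining operator-valued errors into scalar quadrature discrepancies, and the uniform lower bound $\lambda_{h,m}\geq 1/C_P$ from \eqref{eq:discrete-eigenvals-bounds} lets you pull out the suprema. The only cosmetic difference is that you split into $u_h - u_{h,\mathcal Y}^\infty$ and $u_{h,\mathcal Y}^\infty - u_{h,\mathcal Y}^\mathcal K$ before applying Parseval, whereas the paper bounds $\|u_h - u_{h,\mathcal Y}^\mathcal K\|_{L^2(\Omega)}$ directly and then applies the triangle inequality at the scalar level; both are equivalent.
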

\begin{proof}
Owing to \eqref{eq:discretization-error} it suffices to study the quadrature and truncation errors.

Let $M = \dim \mathbb{V}_h$ and $\{ \Phi_{h,m} \}_{m=1}^M \subset \mathbb{V}_h$ be as in \Cref{sub:Deltah}. Then,
\[
  P_h f = \sum_{m=1}^M f_m\Phi_{h,m} \, , \qquad f_m = \int_\Omega f \Phi_{h,m} \, dx \, ,
\]
and
\[
  u_h = (-\Delta_h)^s P_h f = \int_0^\infty t^\alpha F_h(t) P_h f \, dt = \sum_{m=1}^M f_m I_s(F_{\lambda_{h,m}}) \Phi_{h,m} \, .
\]
Similarly, \eqref{eq:formal-solution-sum} can be rewritten as
\[
  u_{h,\mathcal{Y}}^{\mathcal K} = \sum_{m=1}^M f_m Q_s^{\mathcal K}(F_{\lambda_{h,m}}) \Phi_{h,m} \, .
\]

Since $\{\Phi_{h,m} \}_{m=1}^M$ is orthonormal in $L^2(\Omega)$,
\begin{align*}
  \| u_h - u_{h,\mathcal{Y}}^{\mathcal K} \|_{L^2(\Omega)}^2 &= \sum_{m=1}^M |f_m|^2 \left|  I_s(F_{\lambda_{h,m}}) -Q_s^{\mathcal K}(F_{\lambda_{h,m}}) \right|^2 \\
  &\leq \sup_{\lambda > \frac1{C_P}} \left|  I_s(F_{\lambda}) -Q_s^{\mathcal K}(F_{\lambda}) \right|^2 \| f \|_{L^2(\Omega)}^2 \, ,
\end{align*}
where we used \eqref{eq:discrete-eigenvals-bounds}. In summary we have obtained
\[
  \| u_h - u_{h,\mathcal{Y}}^{\mathcal K} \|_{L^2(\Omega)} \leq \sup_{\lambda > \frac1{C_P}} \left|  I_s(F_{\lambda}) -Q_s^{\mathcal K}(F_{\lambda}) \right| \| f \|_{L^2(\Omega)} \, .
\]
The result now immediately follows from
\[
  \left|  I_s(F_{\lambda}) -Q_s^{\mathcal K}(F_{\lambda}) \right| \leq \left|  I_s(F_{\lambda}) -Q_s^{\infty}(F_{\lambda}) \right| + \left| \sum_{k> \mathcal{K}}
    (t^{(s)})^{1-2s} \omega_k^{(s)} F_\lambda(t_k^{(s)})  \right| \, .
\]
\end{proof}

The usefulness of \Cref{thm:convergence-rate-generic} is clear. To obtain an error estimate it suffices to, depending on the value of $s$, estimate the two suprema. We now embark in this task.

\subsection{Error Analysis for $s=\tfrac12$}
\label{subsec:error-one-half}

In the case $s=\tfrac12$, we have that $d_s=1$. Substitute this, \eqref{eq:evs-one-half}, and \eqref{eq:efs-one-half-zero} into \eqref{eq:formal-solution-sum} to conclude that
in this case the quadrature formula $Q_{\tfrac12}^{\mathcal K}$, defined in \eqref{eq:Quad-Fla}, has the following nodes and weights:
\begin{equation}
 \label{eq:quad-n-w-one-half}
 t_k^{(\tfrac12)} = \frac{(k-\tfrac12)\pi}{\mathcal{Y}} \,\, , \qquad\qquad \omega_k^{(\tfrac12)} = \frac{\pi}{\mathcal{Y}} \, .
\end{equation}
In addition
\begin{equation}
 \label{eq:exact-int-one-half}
 I_{\tfrac12} (F_\lambda) = \int_0^\infty F_\lambda(t) \, dt =  \frac2\pi \int_0^\infty \frac{1}{t^2 + \lambda}\, dt =  \frac1{\sqrt{\lambda}}\, .
\end{equation}

We now give two lemmas that will aid in our error estimate.
\begin{lemma}[quadrature error]
\label{lemma:s12-quadrature}
Let $s=\tfrac12$ and $\lambda > 0$. If the weights and nodes of \eqref{eq:Quad-Fla} are specified by \eqref{eq:quad-n-w-one-half}, then we have
 \begin{equation}
  \left| I_{\tfrac12}(F_\lambda) - Q_{\tfrac12}^\infty (F_\lambda) \right| \lesssim \frac{\exp (-\sqrt{\lambda} \mathcal{Y})}{\sqrt{\lambda}} \, .
 \end{equation}
\end{lemma}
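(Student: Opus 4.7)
The strategy is to evaluate the infinite series $Q_{1/2}^\infty(F_\lambda)$ in closed form and compare it with the value $I_{1/2}(F_\lambda) = 1/\sqrt{\lambda}$ already computed in \eqref{eq:exact-int-one-half}. Substituting the nodes and weights from \eqref{eq:quad-n-w-one-half} into the definition \eqref{eq:Quad-Fla} and simplifying, we obtain
\[
  Q_{1/2}^\infty(F_\lambda) = \sum_{k=1}^\infty \frac{\pi}{\mathcal{Y}}\cdot \frac{2}{\pi}\cdot\frac{1}{(k-\tfrac12)^2\pi^2/\mathcal{Y}^2 + \lambda}
  = \sum_{k=1}^\infty \frac{2\mathcal{Y}}{(k-\tfrac12)^2\pi^2 + \lambda \mathcal{Y}^2}.
\]

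The key identity is the Mittag--Leffler partial-fraction expansion of the hyperbolic tangent,
\[
  \tanh(a) = \sum_{k=1}^\infty \frac{2a}{(k-\tfrac12)^2\pi^2 + a^2}, \qquad a>0,
\]
which follows from the classical expansion of $\tan$ via $\tanh(a) = -\imath \tan(\imath a)$. With $a = \sqrt{\lambda}\,\mathcal{Y}$ this immediately yields
\[
  Q_{1/2}^\infty(F_\lambda) = \frac{\tanh(\sqrt{\lambda}\,\mathcal{Y})}{\sqrt{\lambda}}.
\]

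Combining this with \eqref{eq:exact-int-one-half} gives
\[
  \left| I_{1/2}(F_\lambda) - Q_{1/2}^\infty(F_\lambda) \right|
  = \frac{1 - \tanh(\sqrt{\lambda}\,\mathcal{Y})}{\sqrt{\lambda}}.
\]
The elementary estimate $1-\tanh(a) = 2/(e^{2a}+1) \leq 2 e^{-2a}$ for $a>0$ then produces the bound
\[
  \left| I_{1/2}(F_\lambda) - Q_{1/2}^\infty(F_\lambda) \right| \leq \frac{2 e^{-2\sqrt{\lambda}\,\mathcal{Y}}}{\sqrt{\lambda}} \lesssim \frac{e^{-\sqrt{\lambda}\,\mathcal{Y}}}{\sqrt{\lambda}},
\]
as claimed. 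The only nontrivial ingredient is the Mittag--Leffler formula for $\tanh$; everything else is routine algebra and a one-line elementary inequality, so I do not anticipate any real obstacle.
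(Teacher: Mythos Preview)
Your proof is correct and follows essentially the same approach as the paper: both evaluate $Q_{1/2}^\infty(F_\lambda)$ in closed form as $\tanh(\sqrt{\lambda}\,\mathcal{Y})/\sqrt{\lambda}$ via the same partial-fraction identity (the paper cites it from a handbook, you call it the Mittag--Leffler expansion of $\tanh$), and then bound $1-\tanh(\sqrt{\lambda}\,\mathcal{Y})$ by an exponential. Your estimate $1-\tanh(a)\le 2e^{-2a}$ is in fact a bit sharper than needed before you relax it to match the stated bound.
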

\begin{proof}
Set $a=\sqrt{\lambda} \mathcal{Y}/\pi$.
Using 
\cite[Equation (30:6:5)]{oldham2009atlas}, we write
 \begin{align*}
 Q_{\tfrac12}^\infty (F_\lambda) &= 
   \frac{\pi}{\mathcal{Y}} \sum_{k=1}^\infty \frac2\pi \frac{1}{ \frac{(k-\tfrac12)^2\pi^2}{\mathcal{Y}^2} + \lambda }
   = \frac2{\mathcal{Y}} \frac{\mathcal{Y}^2}{\pi^2} \sum_{k=1}^\infty \frac{1}{ (k-\tfrac12)^2 + \lambda \frac{\mathcal{Y}^2}{\pi^2}} \\
    &= \frac{8a}{\pi \sqrt{\lambda}} \sum_{k=1}^\infty \frac{1}{(2k-1)^2+(2a)^2}
    = \frac{8a}{\pi \sqrt{\lambda}} \frac{\pi \tanh(a\pi)}{8 a}
    = \frac1{\sqrt{\lambda}} \tanh \big( \sqrt{\lambda} \mathcal{Y} \big) \, .
 \end{align*}
 Combine this with \eqref{eq:exact-int-one-half} to obtain
 \begin{align*}
  \left| I_{\tfrac12}(F_\lambda) - Q_{\tfrac12}^\infty (F_\lambda) \right| &= \frac1{\sqrt{\lambda}} (1 - \tanh(\sqrt{\lambda} \mathcal{Y}))
    \lesssim \frac{\exp(-\sqrt{\lambda} \mathcal{Y})}{\sqrt{\lambda}} \, .
 \end{align*}
\end{proof}

\begin{lemma}[truncation error]
\label{lemma:s12-truncation}
 Let $s=\tfrac12$, $\lambda > 0$, and $\mathcal{K} \in \mathbb{N}$. Assume that the weights and nodes of $Q_{\tfrac12}^{\mathcal{K}}$ are given in \eqref{eq:quad-n-w-one-half}. If $\mathcal K$ is sufficiently large, then we have
 \begin{equation*}
   \left| Q_{\tfrac12}^\infty (F_\lambda) - Q_{\tfrac12}^{\mathcal{K}} (F_\lambda) \right| \lesssim \frac{\mathcal{Y}}{\mathcal{K}} \, .
 \end{equation*}
\end{lemma}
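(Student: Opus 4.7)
The plan is to expand the difference $Q_{\tfrac12}^\infty(F_\lambda) - Q_{\tfrac12}^{\mathcal{K}}(F_\lambda)$ using the explicit nodes and weights in \eqref{eq:quad-n-w-one-half}, and then dominate the resulting tail sum by a convergent series whose bound is uniform in $\lambda$.

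First, I would write
\[
Q_{\tfrac12}^\infty(F_\lambda) - Q_{\tfrac12}^{\mathcal{K}}(F_\lambda)
   = \frac{2}{\mathcal{Y}} \sum_{k=\mathcal{K}+1}^\infty \frac{1}{(k-\tfrac12)^2 \pi^2/\mathcal{Y}^2 + \lambda}
   = \frac{2\mathcal{Y}}{\pi^2} \sum_{k=\mathcal{K}+1}^\infty \frac{1}{(k-\tfrac12)^2 + \lambda \mathcal{Y}^2/\pi^2}.
\]
Since $\lambda \mathcal{Y}^2/\pi^2 \geq 0$, each summand is bounded above by $(k-\tfrac12)^{-2}$, so the entire expression is dominated by
\[
\frac{2\mathcal{Y}}{\pi^2} \sum_{k=\mathcal{K}+1}^\infty \frac{1}{(k-\tfrac12)^2},
\]
which is manifestly independent of $\lambda$.

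Next, I would control the remaining tail by an integral comparison. Since $x \mapsto (x-\tfrac12)^{-2}$ is decreasing on $(\tfrac12, \infty)$,
\[
\sum_{k=\mathcal{K}+1}^\infty \frac{1}{(k-\tfrac12)^2} \leq \int_{\mathcal{K}}^\infty \frac{dx}{(x-\tfrac12)^2} = \frac{1}{\mathcal{K}-\tfrac12}.
\]
For $\mathcal{K}$ sufficiently large we have $\mathcal{K} - \tfrac12 \geq \mathcal{K}/2$, hence the tail is bounded by $2/\mathcal{K}$. Substituting this back gives the asserted $\mathcal{Y}/\mathcal{K}$ bound, with an explicit constant $4/\pi^2$ absorbed into the $\lesssim$.

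This argument contains no real obstacle; the key observation is that the quadrature weights are uniform, so the tail is a standard $\sum 1/k^2$ tail, and the $\lambda$-dependence can simply be thrown away to obtain a uniform bound (which is what is needed to later combine with \Cref{thm:convergence-rate-generic}). One minor point worth flagging is that the hypothesis ``$\mathcal{K}$ sufficiently large'' in the statement is only used to pass from $\mathcal{K}-\tfrac12$ to $\mathcal{K}/2$ in the denominator; any fixed threshold $\mathcal{K} \geq 1$ would suffice with a slightly worse constant.
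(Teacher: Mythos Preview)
Your proof is correct and follows essentially the same approach as the paper: expand the tail using the explicit nodes and weights, drop the $\lambda$-dependent term to obtain a uniform bound, and then control the remaining $\sum (k-\tfrac12)^{-2}$ tail. The only difference is in the final step: the paper re-indexes to $\sum_{k\geq \mathcal K} k^{-2}$, recognizes this as the trigamma value $\psi'(\mathcal K)$, and invokes its large-argument asymptotic $\psi'(x)\lesssim 1/x$, whereas you use a direct integral comparison to get $1/(\mathcal K-\tfrac12)$. Your route is more elementary and yields an explicit constant without appeal to special-function asymptotics; both arrive at the same $\mathcal Y/\mathcal K$ bound.
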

\begin{proof}
Recall that from \cite[Section 1.9, formula (10)]{bateman1953higher}
\begin{equation*}
  \psi' (\mathcal{K}) = \sum_{k \geq \mathcal{K}} \frac{1}{k^2} \, ,
\end{equation*}
where $\psi$ is the digamma function. Using the asymptotic bound for $\psi$ for large arguments \cite[Formula 6.4.12]{abramowitz1988handbook},
$\psi'$ behaves as
\begin{equation*}
  \psi'(x) \sim \frac{1}{x} + \frac{1}{2x^2} + \frac{1}{6x^3} - \frac{1}{30x^5} + \frac{1}{42 x^7} - \cdots \lesssim \frac1x \,\, .
\end{equation*}
Combining these two observations, we have that
\begin{align*}
  \left| Q_{\tfrac12}^\infty(F_\lambda) - Q_{\tfrac12}^{\mathcal K}(F_\lambda) \right| &= 
    \frac2{\mathcal Y} \sum_{k > \mathcal{K}}  \frac1{ \frac{(k-\tfrac12)^2 \pi^2}{{\mathcal Y}^2} + \lambda }
    = \frac{2 \mathcal{Y}}{\pi^2} \sum_{k> \mathcal{K}} \frac1{ (k-\tfrac12)^2 + \lambda \tfrac{{\mathcal Y}^2}{\pi^2} } \\
    & \leq \frac{2 \mathcal{Y}}{\pi^2} \sum_{k \geq \mathcal{K}} \frac1{ (k+\tfrac12)^2 } 
    \leq \frac{2 \mathcal{Y}}{\pi^2} \sum_{k \geq \mathcal{K}} \frac1{ k^2 } 
    = \frac{2 \mathcal{Y}}{\pi^2} \psi'( \mathcal K)
   \lesssim \frac{\mathcal{Y}}{\mathcal{K}} \, ,
\end{align*}
which is the bound we needed to obtain.
\end{proof}

Using these lemmas, we have a corollary to \Cref{thm:convergence-rate-generic}.

\begin{corollary}[error estimate for $s = \tfrac12$]
\label{cor:ErrEsts12}
Let $\Omega \subset \mathbb{R}^d$ be a bounded, convex polytope and $f \in L^2(\Omega)$. Let
$u \in \mathbb{H}^{1/2}(\Omega)$ solve \eqref{eq:pde-problem} for $s =\tfrac12$. Let $u_{h, \mathcal Y}^{\mathcal K} \in \mathbb{V}_h$ be given by \eqref{eq:formal-solution-sum}. Then, we have that
\begin{equation*}
 \| u - u_{h,\mathcal{Y}}^{\mathcal{K}} \|_{L^2(\Omega)} \lesssim \left( 
   h 
   + \sqrt{C_P} \exp \left( -\frac{\mathcal{Y}}{ \sqrt{C_P} } \right )
   + \frac{\mathcal{Y}}{\mathcal{K}}
 \right) \| f \|_{L^2(\Omega)} \, ,
\end{equation*}
where $C_P$ is given in \eqref{eq:discrete-eigenvals-bounds}. In particular, if $\mathcal Y \sim |\log h|$, and $\mathcal K \sim \frac{|\log h|}{h}$, we have
\[
  \| u - u_{h,\mathcal{Y}}^{\mathcal{K}} \|_{L^2(\Omega)} \lesssim h \| f \|_{L^2(\Omega)} \, .
\]
\end{corollary}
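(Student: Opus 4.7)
The plan is to apply \Cref{thm:convergence-rate-generic} with $s = \tfrac12$ and then bound the two suprema on its right-hand side using \Cref{lemma:s12-quadrature} and \Cref{lemma:s12-truncation}, respectively. The discretization term is immediate: since $s = \tfrac12$, we have $h^{2s} = h$, which accounts for the first contribution.

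For the quadrature supremum, I would substitute the pointwise bound from \Cref{lemma:s12-quadrature} and study how the function
\[
  g(\lambda) = \frac{\exp(-\sqrt{\lambda}\,\mathcal{Y})}{\sqrt{\lambda}}
\]
behaves as $\lambda$ varies. Setting $x = \sqrt{\lambda}$ and differentiating $x \mapsto x^{-1} e^{-x\mathcal{Y}}$ shows that $g$ is strictly decreasing in $\lambda$, so the supremum over $\lambda \geq 1/C_P$ is attained at the lower endpoint $\lambda = 1/C_P$. This gives the second term $\sqrt{C_P}\exp(-\mathcal{Y}/\sqrt{C_P})$.

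For the truncation supremum, \Cref{lemma:s12-truncation} provides a bound that does not depend on $\lambda$ at all, so the supremum over $\lambda \geq 1/C_P$ is simply $\mathcal{Y}/\mathcal{K}$. Assembling the three contributions yields the main error bound exactly as stated.

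The final claim is then a matter of choosing parameters. With $\mathcal{Y} \sim |\log h|$ and a sufficiently large hidden constant (at least $\sqrt{C_P}$), the exponential term becomes $\mathcal{O}(h)$; with $\mathcal{K} \sim |\log h|/h$, the truncation term $\mathcal{Y}/\mathcal{K}$ also becomes $\mathcal{O}(h)$. All three contributions are therefore of order $h$, giving the asserted estimate. There is no real obstacle here — the work is bookkeeping — but the only subtle point is the monotonicity argument that identifies $\lambda = 1/C_P$ as the worst case in the quadrature bound.
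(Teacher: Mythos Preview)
Your proposal is correct and follows essentially the same route as the paper: apply \Cref{thm:convergence-rate-generic}, bound the two suprema via \Cref{lemma:s12-quadrature} and \Cref{lemma:s12-truncation}, and then balance the discretization parameters. The monotonicity argument you spell out to identify $\lambda = 1/C_P$ as the worst case is exactly the content the paper hides behind its reference to \eqref{eq:discrete-eigenvals-bounds}.
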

\begin{proof}
The proof of the error estimate essentially entails using \Cref{lemma:s12-quadrature} and \Cref{lemma:s12-truncation} to bound the estimate in \Cref{thm:convergence-rate-generic} and using \eqref{eq:discrete-eigenvals-bounds}.

The practical error estimate, that is the one in terms of $h$, is given by using the prescribed relations between all discretization parameters.
\end{proof}

\subsection{Error Analysis for $s \neq \tfrac12$}
\label{subsec:error-not-one-half}

We first use \eqref{eq:formal-solution-sum} to realize that this solution representation is indeed a quadrature formula for the Balakrishnan formula \eqref{eq:balakrishnan}.

\begin{lemma}[quadrature]
Let $s \in (0,1)\setminus \{\tfrac12\}$. Equation \eqref{eq:formal-solution-sum} can be written as
\[
  u_{h,\mathcal{Y}}^{\mathcal{K}} = \sum_{m=1}^M f_m Q_s^{\mathcal{K}} ( F_{\lambda_{h,m}} ) \Phi_{h,m} \, .
\]
The quadrature formula $Q_s^{\mathcal{K}}$ is defined in \eqref{eq:Quad-Fla}. Its nodes and weights are given by
\begin{equation}
  \label{eq:s-not-one-half-weights}
  t_k^{(s)} = \sqrt{\mu_k} = \frac{\eta_k}{\mathcal{Y}} \  , \qquad \omega_k^{(s)} = \frac{2}{\mathcal{Y} \eta_k J_{1-s}^2(\eta_k) } \, .
\end{equation}
\end{lemma}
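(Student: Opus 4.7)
The plan is to start from the explicit formula \eqref{eq:formal-solution-sum} for $u_{h,\mathcal Y}^{\mathcal K}$ and manipulate it so that it matches the form $\sum_m f_m\, Q_s^{\mathcal K}(F_{\lambda_{h,m}})\,\Phi_{h,m}$. Since $\{\Phi_{h,m}\}_{m=1}^M$ is an $L^2(\Omega)$-orthonormal basis of $\mathbb{V}_h$ consisting of eigenfunctions of $-\Delta_h$ with eigenvalues $\lambda_{h,m}$, one has
\[
  (\mu_k \mathrm{I} - \Delta_h)^{-1}\Phi_{h,m} = \frac{1}{\mu_k + \lambda_{h,m}}\Phi_{h,m},
\]
so writing $P_h f = \sum_m f_m \Phi_{h,m}$ reduces \eqref{eq:formal-solution-sum} to
\[
  u_{h,\mathcal Y}^{\mathcal K} = \sum_{m=1}^M f_m \left[ d_s \sum_{k=1}^{\mathcal K} \frac{|\psi_k(0)|^2}{\mu_k+\lambda_{h,m}} \right]\Phi_{h,m}.
\]

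Next I would plug in the explicit expressions. From \eqref{eq:efs-other-zero} we have
\[
  |\psi_k(0)|^2 = \frac{2^{2s+1}}{\mu_k^{\,s}\,\mathcal Y^2\, J_{1-s}(\eta_k)^2\,\Gamma(1-s)^2},
\]
and from \eqref{eq:define-ds}, $d_s = 2^{1-2s}\Gamma(1-s)/\Gamma(s)$. Multiplying these and invoking Euler's reflection identity $\Gamma(s)\Gamma(1-s) = \pi/\sin(\pi s)$ to eliminate the Gamma factors gives
\[
  d_s |\psi_k(0)|^2 = \frac{4\,\sin(\pi s)}{\pi\,\mu_k^{\,s}\,\mathcal Y^2\,J_{1-s}(\eta_k)^2}.
\]

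Finally I would compare this with the quadrature sum $Q_s^{\mathcal K}(F_\lambda) = \sum_{k=1}^{\mathcal K} (t_k^{(s)})^{1-2s}\omega_k^{(s)} F_\lambda(t_k^{(s)})$ from \eqref{eq:Quad-Fla}, with $F_\lambda(t) = (2\sin(\pi s)/\pi)(t^2+\lambda)^{-1}$. Setting $t_k^{(s)} = \sqrt{\mu_k} = \eta_k/\mathcal Y$, per \eqref{eq:evs-other}, the factor $(t_k^{(s)})^{1-2s} = \mu_k^{1/2-s}$ combines with the required $\mu_k^{-s}$ in the bracketed expression above, and matching the remaining constants forces
\[
  \omega_k^{(s)} = \frac{2}{\sqrt{\mu_k}\,\mathcal Y^2\,J_{1-s}(\eta_k)^2} = \frac{2}{\mathcal Y\,\eta_k\,J_{1-s}(\eta_k)^2},
\]
which is exactly \eqref{eq:s-not-one-half-weights}. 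The only delicate bookkeeping step, and the one most liable to arithmetic slips, is the consolidation of the powers of $2$, $\mu_k$, and $\mathcal Y$ together with the reflection identity; the rest is substitution and rearrangement.
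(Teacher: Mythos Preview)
Your proposal is correct and follows essentially the same route as the paper: substitute the explicit values of $d_s$ and $|\psi_k(0)|^2$, collapse the Gamma factors via Euler's reflection formula, and then read off the nodes and weights by comparison with \eqref{eq:Quad-Fla}. The only cosmetic difference is that you expand in the discrete eigenbasis $\{\Phi_{h,m}\}$ first and then simplify the constants, whereas the paper simplifies the constants at the operator level before expanding; the algebra and the key identity are identical.
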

\begin{proof}
We substitute \eqref{eq:define-ds} and \eqref{eq:efs-other-zero} into \eqref{eq:formal-solution-sum}. Using the famous Euler's reflection formula \cite[Formula 6.1.17]{abramowitz1988handbook}
\[
  \Gamma(s)\Gamma(1-s) = \frac\pi{\sin(\pi s)},
\]
we obtain
\begin{align*}
  u_{h,\mathcal{Y}}^{\mathcal{K}} &= d_s \sum_{k=1}^{\mathcal{K}} |\psi_k(0)|^2 (\mu_k \mathrm{I} - \Delta_h)^{-1}P_h f \\
   &= \frac{2^{1-2s} \Gamma (1-s)}{\Gamma(s)} \sum_{k=1}^{\mathcal{K}} \frac{2^{1+2s}}{ \mu_k^s \mathcal{Y}^2 \Gamma^2(1-s) J_{1-s}^2(\eta_k)} (\mu_k \mathrm{I} - \Delta_h)^{-1} P_h f \\
   &= \frac{2 \sin(\pi s)}{\pi} \sum_{k=1}^{\mathcal{K}} \frac{2}{\mu_k^s \mathcal{Y}^2 J_{1-s}^2(\eta_k)} (\mu_k \mathrm{I} - \Delta_h)^{-1} P_h f \, .
\end{align*}

Substitute \eqref{eq:evs-other} into this result to obtain that
\begin{align*}
  u_{h,\mathcal{Y}}^{\mathcal{K}} &=
    \sum_{k=1}^{\mathcal{K}} \left( \frac{\eta_k}{\mathcal{Y}} \right)^{1-2s} \frac{2}{\eta_k \mathcal{Y} J_{1-s}^2(\eta_k)} \left[ \frac{2 \sin(\pi s)}{\pi} \left( \left(\frac{\eta_k}{\mathcal Y} \right)^2 \mathrm{I} - \Delta_h \right)^{-1} \right] P_h f \\
  &= \sum_{k=1}^{\mathcal{K}} \left( \frac{\eta_k}{\mathcal{Y}} \right)^{1-2s} \frac{2}{\eta_k \mathcal{Y} J_{1-s}^2(\eta_k)} \left[ \frac{2 \sin(\pi s)}{\pi} \sum_{m=1}^M f_m \frac1{ \left(\frac{\eta_k}{\mathcal Y} \right)^2 + \lambda_{h,m} } \right] \Phi_{h,m} \\
  &= \sum_{m=1}^M f_m \sum_{k=1}^{\mathcal K} \left( t_k^{(s)} \right)^{1-2s} \omega_k^{(s)} F_{\lambda_{h,m}}(t_k^{(s)}) \Phi_{h,m}
  = \sum_{m=1}^M f_m Q_s^{\mathcal{K}} ( F_{\lambda_{h,m}} ) \Phi_{h,m} \, ,
\end{align*}
where, as claimed, the nodes and weights are given by \eqref{eq:s-not-one-half-weights}.
\end{proof}

Having identified the nodes and weights of our quadrature rule \eqref{eq:Quad-Fla}, we relate it to the one studied in \cite[Equations 1.1 and 1.2]{ogata2005numerical}. In this work, for $\nu>-1$, the following quadrature formula is studied
\[
  \int_{-\infty}^\infty |t|^{1+2\nu} G(t) \, dt \approx \tau \sum_{k \in \mathbb{Z}\setminus \{0\}} w_{\nu,k} | \tau \xi_{\nu,k} |^{1+2\nu} G( \tau \xi_{\nu,k}) \, ,
\]
where $\tau>0$ is a given parameter, the weights are given by
 \begin{equation*}
  w_{\nu,k} = \frac{2}{ \pi^2 \xi_{\nu, |k|} J_{\nu+1}^2 (\pi \xi_{\nu,|k|})}  \, , \quad k \in \mathbb{Z} \setminus \{0\} \, ,
 \end{equation*}
and, for $k \in \mathbb{N}$, $\xi_{\nu,k}$ are the zeros of $J_{\nu} (\pi x)$ ordered in such a way that
\[
  0 < \xi_{\nu, 1} < \xi_{\nu, 2} < \cdots.
\]
Finally, $\xi_{\nu, -k} = - \xi_{\nu, k}$, $k \in \mathbb{N}$.

Let $G$ be an even function. it is immediate that, with the same nodes and weights,
\begin{equation}
\label{eq:OgataQuad}
  \int_0^\infty t^{1+2\nu} G(t) dt \approx \tau \sum_{k =1}^\infty  w_{\nu,k} | \tau \xi_{\nu,k} |^{1+2\nu} G( \tau \xi_{\nu,k}) \, .
\end{equation}
We now relate this quadrature formula with $Q_s^\infty(F_\lambda)$.

\begin{lemma}[quadrature equivalence]
\label{lem:Quad-Equivalence}
 Let $s \in (0,1)\setminus \{ \tfrac12 \}$. Let the nodes and weights of the quadrature scheme $Q_s^\infty(F_\lambda)$ be given by \eqref{eq:s-not-one-half-weights}. Then, this quadrature coincides with the one given in \eqref{eq:OgataQuad} with $\nu = -s > -1$ and $\tau = \tfrac\pi{\mathcal Y}$.
\end{lemma}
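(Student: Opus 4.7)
The plan is direct verification: substitute the prescribed parameters into the Ogata formula and match the resulting nodes and weights against those in \eqref{eq:s-not-one-half-weights}. Since $F_\lambda(t) = \frac{2\sin(\pi s)}{\pi}(t^2+\lambda)^{-1}$ is an even function of $t$, the one-sided form \eqref{eq:OgataQuad} of Ogata's rule applies and we only need to work on $k \geq 1$.

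First I would identify the nodes. With $\nu = -s$, the quantities $\xi_{-s,k}$ are, by definition, the positive zeros of $J_{-s}(\pi x)$, so $\pi \xi_{-s,k} = \eta_k$, that is, $\xi_{-s,k} = \eta_k/\pi$. Then the Ogata nodes become
\[
\tau \xi_{-s,k} = \frac{\pi}{\mathcal{Y}} \cdot \frac{\eta_k}{\pi} = \frac{\eta_k}{\mathcal{Y}} = t_k^{(s)},
\]
matching \eqref{eq:s-not-one-half-weights}.

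Next I would match the weights. The Ogata weight is $w_{-s,k} = 2/\bigl(\pi^2 \xi_{-s,k} J_{1-s}^2(\pi \xi_{-s,k})\bigr)$, and the effective coefficient multiplying $|\tau\xi_{-s,k}|^{1+2\nu} G(\tau\xi_{-s,k}) = (t_k^{(s)})^{1-2s} F_\lambda(t_k^{(s)})$ in \eqref{eq:OgataQuad} is $\tau w_{-s,k}$. Substituting $\xi_{-s,k} = \eta_k/\pi$ and $\tau = \pi/\mathcal{Y}$,
\[
\tau w_{-s,k} = \frac{\pi}{\mathcal{Y}} \cdot \frac{2}{\pi^2 (\eta_k/\pi) J_{1-s}^2(\eta_k)} = \frac{2}{\mathcal{Y} \eta_k J_{1-s}^2(\eta_k)} = \omega_k^{(s)},
\]
again matching \eqref{eq:s-not-one-half-weights}. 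Combining these identifications, the right-hand side of \eqref{eq:OgataQuad} with $\nu=-s$, $\tau=\pi/\mathcal{Y}$, and $G=F_\lambda$ is precisely $\sum_{k\geq 1} (t_k^{(s)})^{1-2s} \omega_k^{(s)} F_\lambda(t_k^{(s)}) = Q_s^\infty(F_\lambda)$.

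There is no real obstacle beyond bookkeeping: the only thing to be slightly careful about is the index shift between Ogata's use of $\nu+1$ in the Bessel-function subscript versus our use of $1-s$, which works out because $(-s)+1 = 1-s$, and the compatibility $\nu = -s > -1$ is guaranteed by $s \in (0,1)$. The evenness of $F_\lambda$ lets us pass from Ogata's bilateral formulation to the one-sided version in \eqref{eq:OgataQuad} without any extra factor.
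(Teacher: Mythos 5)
Your proof is correct and takes essentially the same approach as the paper: both arguments identify $\pi\xi_{-s,k} = \eta_k$ from the definition of the zeros, set $\tau = \pi/\mathcal{Y}$, and verify by direct substitution that the Ogata nodes $\tau\xi_{-s,k}$ and effective weights $\tau w_{-s,k}$ reduce to $t_k^{(s)}$ and $\omega_k^{(s)}$ of \eqref{eq:s-not-one-half-weights}. The only cosmetic difference is that you match nodes and weights in two separate steps, while the paper carries out the substitution in one chained display.
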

\begin{proof}
First recall that, for any $\lambda >0$, the function $F_\lambda$ is even. Next, for $k \in \mathbb{N}$, we have that $J_{-s}(\eta_k)=0$. We thus conclude that $\pi \xi_{-s,k} = \eta_k$. In addition, by setting $\tau = \frac{\pi}{\mathcal{Y}}$, we have $\frac{\eta_k}{\mathcal{Y}} = \tau \xi_{-s,k}$. Substituting this into \eqref{eq:OgataQuad} we see that
 \begin{align*}
  \tau \sum_{k=1}^{\mathcal K} \frac{2}{\pi^2 \xi_{-s,k} J_{1-s}^2(\pi \xi_{-s,k})} & | \tau \xi_{-s,k} |^{1-2s} F_\lambda( \tau \xi_{-s,k}) \\
   &= \sum_{k =1}^{\mathcal K} \frac{\tau}{\pi} \frac{2}{\pi \xi_{-s,k} J_{1-s}^2(\pi \xi_{-s,k}) } | \tau \xi_{-s,k}|^{1-2s} F_\lambda( \tau \xi_{-s,k} ) \\
   &= \sum_{k =1}^{\mathcal K} \frac{2}{\mathcal{Y} \eta_k J_{1-s}^2(\eta_k)} \left(\frac{\eta_k}{\mathcal{Y}} \right)^{1-2s} F_\lambda \left(\frac{\eta_k}{\mathcal{Y}} \right) 
   = Q_s^\infty( F_\lambda ) \, .
 \end{align*}
\end{proof}

Reference \cite{ogata2005numerical} provides an analysis for the quadrature \eqref{eq:OgataQuad}. To describe it, following \cite[Definition 2.1]{ogata2005numerical} we introduce the following class.

\begin{definition}[class $\mathcal{B}_{s,\ell}$]
\label{def:ClassBsd}
Let $\ell>0$ and denote
\[
  D_\ell = \{ z \in \mathbb{C} : | \Im z| < \ell \} \, , \qquad \Gamma_\ell = \partial D_\ell \, .
\]
By $\mathcal{B}_{s,\ell}$ we denote the collection of functions $G: \bar{D}_\ell \to \mathbb{C}$ that satisfy:
\end{definition}
\begin{enumerate}[1.]
 \item $G$ is analytic in the strip $D_\ell$.
 
 \item For all $0<c<\ell$, the integral
 \begin{equation*}
  \mathcal{N}_{s,c}(G) = \int_{-\infty}^\infty \left[ |x+\imath c|^{1-2s} |G(x+\imath c)| + |x- \imath c|^{1-2s} |G(x-\imath c)| \right] \, dx
 \end{equation*}
 exists. In addition, we require that $\mathcal{N}_{s,\ell-0}(G) = \lim_{c \uparrow \ell} \mathcal{N}_{s,c}(G)$ exists and is finite.

  \item For all $0<c<\ell$, we require that
 \begin{equation}
  \label{eq:condition-integral-ii}
  \lim_{x \to \pm \infty} \int_{-c}^c |x+\imath y|^{1-2s} |G(x+\imath y)| \, dy = 0 \, .
 \end{equation}
\end{enumerate}

The usefulness of this class with regards to quadrature is the following result.

\begin{theorem}[quadrature error estimate]
\label{thm:ogata}
 Let $\ell > 0$, $s \in (0,1)$, and $G \in \mathcal{B}_{s, \ell}$. Then
 \begin{equation}
 \label{eq:ogata-bound}
  | I_{s} (G) - Q_{s}^\infty (G) | \lesssim \mathcal{N}_{s,\ell-0}(G) \exp \left( -2\ell\mathcal{Y} \right) \, ,
 \end{equation}
where the implicit constant depends only on $s$ and $\ell$.
\end{theorem}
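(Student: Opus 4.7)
The plan is to reduce the statement to the quadrature error estimate of Ogata \cite{ogata2005numerical}, which has already been set up through \Cref{lem:Quad-Equivalence} and \Cref{def:ClassBsd}. Concretely, by \Cref{lem:Quad-Equivalence} we have that $Q_s^\infty(G)$ coincides with the Ogata quadrature applied to the even function $G$, with the identification $\nu = -s$ and $\tau = \pi/\mathcal{Y}$. The class $\mathcal{B}_{s,\ell}$ introduced above matches the class in \cite[Definition 2.1]{ogata2005numerical} for $\nu = -s$, so the proof reduces to invoking the corresponding error bound of \cite[Theorem 2.2]{ogata2005numerical}.

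The key step for self-containedness would be to rederive the exponential factor $\exp(-2\ell\mathcal{Y})$. First I would write the error $I_s(G) - Q_s^\infty(G)$ as a contour integral over $\Gamma_\ell$ using a Bessel-function version of the Frappier–Olivier representation \cite{ogata2005numerical}: namely, introduce a meromorphic kernel $K_s(z)$ built from ratios of Bessel functions, whose simple poles are precisely at the quadrature nodes $\pm\eta_k/\mathcal{Y}$ with residues $(t_k^{(s)})^{1-2s}\omega_k^{(s)}$, and whose contour integral around $D_\ell$ reproduces $|t|^{1-2s}G(t)$ on the real line. The difference between the quadrature sum and the integral is then obtained by pushing the contour out to $\Gamma_\ell$ and collecting what is left.

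The main obstacle, and the only place where the parameter $\tau = \pi/\mathcal{Y}$ enters quantitatively, is estimating $|K_s(z)|$ on $\Gamma_\ell$. This uses the large-argument asymptotics of $J_{-s}$ and $J_{1-s}$ in the complex plane: on the horizontal lines $\Im z = \pm\ell$, the ratio behaves like $\exp(-2\ell \mathcal{Y})$ times a polynomially growing factor absorbed by $|x \pm \imath\ell|^{1-2s}$. Combined with condition \eqref{eq:condition-integral-ii}, which makes the vertical side contributions vanish as $|\Re z| \to \infty$, the contour integral on $\Gamma_\ell$ is bounded in modulus by
\begin{equation*}
   C\exp(-2\ell\mathcal{Y}) \int_{-\infty}^\infty \bigl[ |x+\imath\ell|^{1-2s}|G(x+\imath\ell)| + |x-\imath\ell|^{1-2s}|G(x-\imath\ell)| \bigr] dx,
\end{equation*}
which is precisely $C\,\mathcal{N}_{s,\ell-0}(G)\exp(-2\ell\mathcal{Y})$ after passing to the limit $c \uparrow \ell$.

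Finally, I would note that the constant $C$ depends only on $s$ and $\ell$ through the Bessel asymptotics used in the kernel estimate, matching the claim in \eqref{eq:ogata-bound}. Since we only need this result as a black box for \Cref{thm:convergence-rate-generic}, and a complete derivation is lengthy, I would most likely cite \cite[Theorem 2.2]{ogata2005numerical} directly and omit the detailed contour analysis.
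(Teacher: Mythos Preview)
Your proposal is correct and matches the paper's approach exactly: the paper's proof consists of a single sentence citing \cite[Theorem 2.1]{ogata2005numerical} together with the equivalence of \Cref{lem:Quad-Equivalence}, which is precisely what you do (modulo the theorem number --- you write Theorem 2.2, the paper cites Theorem 2.1; double-check which statement in \cite{ogata2005numerical} gives the bound as formulated here). Your additional sketch of the contour-integral argument behind Ogata's result is accurate and goes beyond what the paper provides, but since you conclude by citing the result directly, the two proofs are effectively identical.
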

\begin{proof}
  See \cite[Theorem 2.1]{ogata2005numerical}, and use the equivalence of \Cref{lem:Quad-Equivalence}.
\end{proof}

The functions we are integrating are uniformly in a class $\mathcal{B}_{s,\ell}$.

\begin{theorem}[$F_\lambda \in \mathcal{B}_{s,\ell}$]
\label{thm:F-in_B}
  Let $s \in (0,1)$, $\lambda_0 > 0$, and set $\ell = \tfrac12 \sqrt{\lambda_0}$. For every $\lambda \geq \lambda_0$ we have $F_\lambda \in \mathcal{B}_{s,\ell}$. Moreover,
  $
    \mathcal{N}_{s,\ell-0}(F_\lambda)
  $
  depends only on $s$ and $\ell$.
\end{theorem}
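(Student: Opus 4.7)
The plan is to verify the three defining conditions of $\mathcal{B}_{s,\ell}$ directly for the rational function $F_\lambda(z) = \tfrac{2\sin(\pi s)}{\pi(z^2+\lambda)}$. The structural observation driving every step is that, since $\lambda\geq\lambda_0=4\ell^2$, the only poles of $F_\lambda$, namely $\pm\imath\sqrt\lambda$, lie at height $\sqrt\lambda\geq 2\ell$ and therefore outside $\bar D_\ell$. This immediately settles analyticity on (an open neighborhood of) $\bar D_\ell$ and yields the quantitative lower bound
\[
  |z^2+\lambda|^2 = (x^2-c^2+\lambda)^2+4c^2x^2 \geq \bigl(x^2+\tfrac{3\lambda}{4}\bigr)^2, \qquad z=x\pm\imath c,\ |c|<\ell,
\]
obtained from $c^2\leq\ell^2\leq\lambda/4$. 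Hence $|F_\lambda(x\pm\imath c)|\lesssim_s (x^2+\lambda)^{-1}$ uniformly in $c\in(0,\ell)$ and in $\lambda\geq\lambda_0$.

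To control $\mathcal{N}_{s,c}(F_\lambda)$ and extract a bound depending only on $s$ and $\ell$, the natural move is to rescale by $z=\sqrt{\lambda}\,w$, $w=u+\imath v$, so that $v=c/\sqrt\lambda$. A direct substitution produces
\[
  \mathcal{N}_{s,c}(F_\lambda) = \lambda^{-s}\,\frac{2\sin(\pi s)}{\pi}\int_{-\infty}^\infty\frac{|u+\imath v|^{1-2s}+|u-\imath v|^{1-2s}}{|(u+\imath v)^2+1|}\,du =: \lambda^{-s}\,H_s(v),
\]
and the constraint $c<\ell$ becomes $v<\ell/\sqrt\lambda\leq 1/2$. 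The plan is then to show that $H_s$ is bounded on $[0,1/2]$: the denominator $|(u+\imath v)^2+1|\geq u^2+3/4$ is uniformly nonvanishing, at infinity the integrand decays like $|u|^{-1-2s}$ which is in $L^1$ for $s>0$, and the worst-case singular behavior $|u|^{1-2s}$ near $u=0$ (relevant when $s>\tfrac12$) remains integrable because $s<1$. A dominated-convergence argument with the fixed majorant $\mathbf{1}_{|u|\leq 1}|u|^{1-2s}+\mathbf{1}_{|u|>1}|u|^{-1-2s}$ yields $H_s(v)\leq C(s)$ uniformly in $v\in[0,1/2]$, and thus $\mathcal{N}_{s,\ell-0}(F_\lambda)\lesssim_s \lambda^{-s}\leq\lambda_0^{-s} = (2\ell)^{-2s}$, a bound depending only on $s$ and $\ell$, establishing condition~2.

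Condition~3 of \Cref{def:ClassBsd} is then immediate from the pointwise estimates: for $|x|\geq c$ and $|y|\leq c<\ell$, one has $|x+\imath y|^{1-2s}|F_\lambda(x+\imath y)|\lesssim_s |x|^{-1-2s}$, so
\[
  \int_{-c}^{c}|x+\imath y|^{1-2s}|F_\lambda(x+\imath y)|\,dy \lesssim_s c\,|x|^{-1-2s} \longrightarrow 0 \text{ as } |x|\to\infty,
\]
which is \eqref{eq:condition-integral-ii}.

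The hard part will be the uniform boundedness of $H_s$ near $v=0$ when $s>\tfrac12$, since $|u+\imath v|^{1-2s}$ is not locally bounded there. What rescues the argument is precisely that the limiting singularity $|u|^{1-2s}$ is integrable against $du$ (as $s<1$): splitting the $u$-integral at $|u|=1$ and dominating the compact piece by $|u|^{1-2s}$ and the tail by $|u|^{-1-2s}$ provides a $v$-independent $L^1$ majorant, which is the one ingredient that makes the uniform bound, and hence the whole theorem, go through.
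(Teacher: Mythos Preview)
Your proof is correct and follows the same overall template as the paper---verify the three conditions of \Cref{def:ClassBsd} directly---but your treatment of condition~2 is genuinely different and arguably cleaner. The paper estimates $|x+\imath c|^{1-2s}|F_\lambda(x+\imath c)|\leq a_s x^{-1-2s}(1+c^2/x^2)^{(1-2s)/2}$ for $|x|>c$, handles the compact remainder separately, and then asserts (somewhat informally) that the limit as $c\uparrow\ell$ depends only on $s$ and $\ell$. Your rescaling $z=\sqrt{\lambda}\,w$ makes this $\lambda$-uniformity completely explicit, yielding the sharper bound $\mathcal{N}_{s,\ell-0}(F_\lambda)\leq C(s)\lambda^{-s}\leq C(s)(2\ell)^{-2s}$ by reducing everything to the single function $H_s$ on the fixed interval $[0,\tfrac12]$. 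For condition~3 the paper uses the substitution $t=y/x$ and a case split on the sign of $1-2s$, whereas you simply recycle the denominator lower bound $|z^2+\lambda|\geq x^2+\tfrac{3\lambda}{4}$ to pass directly to $|x|^{-1-2s}$; your route is shorter. One minor slip to patch: your stated majorant $\mathbf{1}_{|u|\leq 1}|u|^{1-2s}$ only dominates the integrand (up to constants) when $s>\tfrac12$; for $s<\tfrac12$ the integrand near $u=0$ stays bounded away from zero while $|u|^{1-2s}\to 0$, so take instead $\mathbf{1}_{|u|\leq 1}\bigl(1+|u|^{1-2s}\bigr)$ or simply a constant on $|u|\leq 1$ when $1-2s\geq 0$.
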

\begin{proof}
For this proof we set $a_s = \tfrac{2\sin(\pi s)}\pi$. We verify each one of the conditions of \Cref{def:ClassBsd} below.
\begin{enumerate}[1.]
  \item We first observe that the only poles of $F_\lambda$ are $\pm \imath \sqrt{\lambda}$. Therefore, by setting $\ell = \tfrac12 \sqrt{\lambda_0}$ we see that, for every $\lambda \geq \lambda_0$, $F_\lambda$ is analytic in $D_\ell$.

  \item Now let $c \in (0,\ell)$. We have
  \begin{align*}
    |F_{\lambda} (x+&\imath c)|^2 = \left| \frac{a_s}{x^2 - c^2 + \lambda + 2 \imath cx} \right|^2 = \frac{a_s^2}{|x^2-c^2+ \lambda +2\imath cx|^2} \\
    &= \frac{a_s^2}{(x^2-c^2+\lambda)^2+(2cx)^2}
     = \frac{a_s^2}{x^4 +2c^2 x^2 + 2 x^2 \lambda -2c^2 \lambda +c^4 + \lambda}.
  \end{align*}
  Then, for $x > c$,
  \begin{align*}
   |x+&\imath c|^{1-2s} |F_{\lambda} (x+\imath c)| = \frac{ a_s (x^2 + c^2 )^{\frac{1-2s}{2}} }{ \big( (x^2-c^2+\lambda)^2 + 4c^2 x^2 \big)^{\tfrac12} } \\
   &= \frac{a_s x^{1-2s} \left( 1 + \frac{c^2}{x^2}\right)^{\frac{1-2s}{2}} }
     {x^2 \left( 1 + \frac{2c^2}{x^2} + \frac{2\lambda}{x^2} - \frac{2c^2 \lambda}{x^4} + \frac{c^4}{x^4} + \frac{\lambda^2}{x^4} \right)^{\tfrac12}}
   \leq a_s x^{-1-2s} \left(1 + \frac{c^2}{x^2} \right)^{\frac{1-2s}{2}} ,
  \end{align*}
  where we also used that $c < \frac{\sqrt{\lambda_0}}{2} \leq \frac{\sqrt{\lambda}}{2}$. Then,
  \begin{multline*}
   \frac1{a_s} \int_{-\infty}^{\infty} |x+ \imath c|^{1-2s}|F_{\lambda}(x+\imath c)| \, dx \leq  \\
     \int_{-\infty}^{-c^2} |x|^{-1-2s} \left( 1 + \frac{c^2}{x^2}\right)^{\frac{1-2s}{2}} \, dx  \, + \,
     C + \int_{c^2}^\infty x^{-1-2s} \left( 1 + \frac{c^2}{x^2}\right)^{\frac{1-2s}{2}} \, dx \, .
  \end{multline*}
  To obtain that
$
   \int_{c^2}^\infty x^{-1-2s} \, dx < \infty \, ,
 $
  we must have that $-1-2s<-1$, or $s >0$. A similar analysis holds for $|x-\imath c|^{1-2s}|F_{\lambda}(x-\imath c)|$. Hence, $\mathcal{N}_{s,c}(F_\lambda)$ exists.
  
  Notice, finally, that we have $c \leq \ell = \tfrac{\sqrt\lambda}2 < \sqrt\lambda$. In other words, we are well separated from the poles of $F_\lambda$. Consequently,
  \[
    \mathcal{N}_{s, \ell - 0}(F_\lambda) = \lim_{c \uparrow \ell} \mathcal{N}_{s, c}(F_\lambda)
  \]
  exists, is finite, and only depends on $s$ and $\ell$.
  
  \item We now check the last condition given by \eqref{eq:condition-integral-ii}. Suppose that $x > 2c$. Then,
  \begin{align*}
   \int_{-c}^c |x+ &\imath y|^{1-2s} \left| \frac{1}{(x+\imath y)^2+\lambda }\right| \, dy = 
     2 \int_0^c \frac{ (x^2+y^2)^{\tfrac{1-2s}{2}} }{\sqrt{|x^2+2\imath xy-y^2+\lambda|} } \, dy \\
     &= 2 \int_0^c \sqrt{ \frac{ (x^2+y^2)^{1-2s}}{ (x^2+\lambda -y^2)^2 + 4x^2 y^2 } } \, dy
     \leq 2 \int_0^c \frac{ \sqrt{ (x^2 + y^2)^{1-2s}} }{ x^2 + \lambda - y^2} \, dy  \\
     &= \frac{2 x^{1-2s}}{x} \int_0^c \frac{ \left( 1 + \frac{y^2}{x^2} \right)^{\frac{1-2s}{2}} }{ 1 + \frac{\lambda}{x^2}-\frac{y^2}{x^2}} \, \frac{dy}{x}
     \leq 2x^{-2s} \int_0^{c/x} \frac{ (1+t^2)^{\frac{1-2s}{2}} }{1-t^2} \, dt  \, ,
  \end{align*}
  where we used the change of variables $t = \frac{y}{x}$. Since $0<2y < 2c < x$, we conclude that $t = \tfrac{y}{x} \in ( 0,\tfrac{1}{2})$, which implies $1- t^2 > \frac{3}{4}$. Therefore,
  \[
    \int_{-c}^c |x+\imath y|^{1-2s} \left| \frac{1}{(x+\imath y)^2+\lambda}\right| \, dy \leq \frac{8}{3} x^{-2s} \int_0^{c/x} (1+t^2)^{\tfrac{1-2s}{2}} \, dt\, .
  \]
  We now proceed depending on the sign of $1-2s$. If $1-2s>0$, then
  \[
    \int_{-c}^c |x+\imath y|^{1-2s} \left| \frac{1}{(x+\imath y)^2+\lambda}\right| \, dy \leq \frac{8}{3} x^{-2s} \left( \frac54 \right)^{\tfrac{1-2s}{2}} \frac{c}x \, ,
  \]
  whereas if $1-2s<0$,
  \[
    \int_{-c}^c |x+\imath y|^{1-2s} \left| \frac{1}{(x+\imath y)^2+\lambda}\right| \, dy \leq \frac{8}{3} x^{-2s} \frac{c}x \, .
  \]
  In any event, these estimates are sufficient for us to conclude that
  \[
    \lim_{x \to \pm \infty} \int_{-c}^c |x+\imath y|^{1-2s} |F_\lambda(x + \imath y )| \, dy  = 0 \, .
  \]
\end{enumerate}
All requirements have been verified and the proof is complete.
\end{proof}

An immediate consequence is an estimate for the quadrature error.

\begin{corollary}[quadrature error]
\label{col:QuadBesselError}
Let $s \in (0,1)\setminus\{\tfrac12\}$. Let the weights and nodes of $Q_s^\infty$ be given by \eqref{eq:s-not-one-half-weights}. Then
\[
  \sup_{\lambda \geq \tfrac1{C_P}} |I_s(F_\lambda) - Q_s^\infty(F_\lambda)| \lesssim \exp \left( - \frac{\mathcal{Y}}{\sqrt{C_P}} \right) \, ,
\]
where $C_P$ is the best constant in the Poincar\'e inequality; see \eqref{eq:discrete-eigenvals-bounds}.
\end{corollary}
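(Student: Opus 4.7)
The proof is essentially a direct combination of the two previous results, \Cref{thm:ogata} and \Cref{thm:F-in_B}, so the plan is almost entirely bookkeeping: identify the right choice of $\ell$, verify that the hypotheses hold uniformly in $\lambda$, and track the explicit dependence on $C_P$.

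First I would set $\lambda_0 = 1/C_P$, which is precisely the lower bound on $\lambda$ that we need to consider in view of \eqref{eq:discrete-eigenvals-bounds}. With this choice, define $\ell = \tfrac12 \sqrt{\lambda_0} = \tfrac{1}{2\sqrt{C_P}}$. By \Cref{thm:F-in_B}, for every $\lambda \geq \lambda_0$ we have $F_\lambda \in \mathcal{B}_{s,\ell}$, and, crucially, the quantity $\mathcal{N}_{s,\ell-0}(F_\lambda)$ is bounded by a constant that depends only on $s$ and $\ell$ (and hence only on $s$ and $C_P$), independently of $\lambda$. This uniformity is the key fact that makes the argument work, and it is the one piece that would deserve a quick sanity check during the write-up.

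Next I would invoke \Cref{thm:ogata} with $G = F_\lambda$ to obtain, for every $\lambda \geq 1/C_P$,
\[
  |I_s(F_\lambda) - Q_s^\infty(F_\lambda)| \lesssim \mathcal{N}_{s,\ell-0}(F_\lambda)\, \exp(-2\ell \mathcal{Y})
  \lesssim \exp\!\left( -\frac{\mathcal{Y}}{\sqrt{C_P}} \right),
\]
where in the last step I substituted $2\ell = 1/\sqrt{C_P}$ and absorbed the $\lambda$-independent constant coming from $\mathcal{N}_{s,\ell-0}(F_\lambda)$ into the implicit constant. Taking the supremum over $\lambda \geq 1/C_P$ on the left-hand side preserves the bound on the right, since the latter is independent of $\lambda$, which yields the claim.

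I do not anticipate a genuine obstacle: \Cref{thm:F-in_B} was stated in a form tailored precisely to supply the uniform bound on $\mathcal{N}_{s,\ell-0}(F_\lambda)$, and \Cref{lem:Quad-Equivalence} already identifies our quadrature with the one analyzed by Ogata, so \Cref{thm:ogata} applies verbatim. The only mild subtlety is choosing $\ell$ as \emph{half} of $\sqrt{\lambda_0}$ so that the singularities of $F_\lambda$ at $\pm \imath \sqrt{\lambda}$ stay comfortably outside the strip $D_\ell$ uniformly in $\lambda \geq \lambda_0$; this is exactly why the exponent in the final bound comes out as $-\mathcal{Y}/\sqrt{C_P}$ rather than $-2\mathcal{Y}/\sqrt{C_P}$.
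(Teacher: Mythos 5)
Your argument is correct and is exactly the paper's approach: choose $\lambda_0 = 1/C_P$ so that \Cref{thm:F-in_B} gives $F_\lambda \in \mathcal{B}_{s,\ell}$ with $\ell = \tfrac{1}{2\sqrt{C_P}}$ and a $\lambda$-uniform bound on $\mathcal{N}_{s,\ell-0}(F_\lambda)$, then apply \Cref{thm:ogata} and note $2\ell = 1/\sqrt{C_P}$. The paper states this in one line; you have merely spelled out the bookkeeping.
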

\begin{proof}
It suffices to set $\lambda_0 =\tfrac1{C_P}$ in \Cref{thm:F-in_B}.
\end{proof}

It remains to bound the truncation error.

\begin{lemma}[truncation error]
 \label{lemma:bessel-quadrature}
 Let $s \in (0,1) \setminus \{ \tfrac12 \}$ and $\mathcal{K} \in \mathbb{N}$. Let the nodes and weights of $Q_s^{\mathcal K}$ be given by \eqref{eq:s-not-one-half-weights}. If $\mathcal{K}$ is sufficiently large, then 
 \begin{equation*}
   \sup_{ \lambda > \tfrac1{C_P} } |Q_{s}^\infty (F_{\lambda}) - Q_{s}^{\mathcal{K}} (F_{\lambda}) | \lesssim \left( \frac{{\mathcal{Y}}}{\mathcal{K}} \right)^{2s} \, .
 \end{equation*}
\end{lemma}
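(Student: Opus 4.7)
The plan is to substitute the explicit nodes and weights \eqref{eq:s-not-one-half-weights} into the tail sum and then exploit the uniform separation of $\eta_k^2/\mathcal{Y}^2$ from $-\lambda$ together with classical asymptotics for the zeros of $J_{-s}$ and for $J_{1-s}$ evaluated at those zeros.

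First, I would write the tail of the quadrature explicitly. Using the definition of $F_\lambda$, I compute
\[
  \left(t_k^{(s)}\right)^{1-2s}\omega_k^{(s)} F_\lambda(t_k^{(s)})
  = \frac{4\sin(\pi s)}{\pi}\, \frac{\eta_k^{-2s}\,\mathcal{Y}^{2s}}{J_{1-s}^2(\eta_k)\bigl(\eta_k^{2}+\lambda \mathcal{Y}^{2}\bigr)},
\]
so that uniformly in $\lambda \geq 1/C_P \geq 0$,
\[
  \bigl|Q_s^\infty(F_\lambda) - Q_s^{\mathcal K}(F_\lambda)\bigr|
  \;\le\; \frac{4\sin(\pi s)}{\pi}\, \mathcal{Y}^{2s}
     \sum_{k>\mathcal K} \frac{\eta_k^{-2s}}{J_{1-s}^2(\eta_k)\,\eta_k^{2}},
\]
where I simply bounded $\eta_k^{2}+\lambda \mathcal{Y}^{2}\ge \eta_k^{2}$. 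This step decouples the estimate from $\lambda$ entirely.

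Next, I need the behavior of the two Bessel quantities. The asymptotics for the zeros give $\eta_k = (k + s/2 - 3/4)\pi + O(k^{-1})$, so in particular $\eta_k \sim k\pi$ as $k \to \infty$. For the denominator, I would use the recurrence $J_{\nu+1}(z) = \tfrac{2\nu}{z}J_\nu(z) - J_{\nu-1}(z)$ (equivalently, at a zero of $J_{-s}$, $J_{1-s}(\eta_k) = -J'_{-s}(\eta_k)$) combined with the standard large-argument asymptotics $J'_{-s}(z) \sim -\sqrt{2/(\pi z)}\sin(z + s\pi/2 - \pi/4)$. Since $J_{-s}(\eta_k)=0$ forces the cosine of the same phase to vanish, the sine factor equals $\pm 1 + O(\eta_k^{-1})$, so
\[
  J_{1-s}^2(\eta_k) \;=\; \frac{2}{\pi \eta_k}\bigl(1 + O(\eta_k^{-1})\bigr).
\]
Thus, for $\mathcal K$ sufficiently large, $\bigl(J_{1-s}^2(\eta_k)\,\eta_k^{2}\bigr)^{-1} \lesssim \eta_k^{-1}$ uniformly in $k > \mathcal{K}$.

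Plugging this back into the tail bound and using $\eta_k \sim k\pi$, the estimate reduces to a $p$-series comparison:
\[
  \bigl|Q_s^\infty(F_\lambda) - Q_s^{\mathcal K}(F_\lambda)\bigr|
  \;\lesssim\; \mathcal{Y}^{2s}\sum_{k>\mathcal K}\eta_k^{-1-2s}
  \;\lesssim\; \mathcal{Y}^{2s}\sum_{k>\mathcal K} k^{-1-2s}
  \;\lesssim\; \mathcal{Y}^{2s}\,\mathcal K^{-2s} = \left(\frac{\mathcal{Y}}{\mathcal{K}}\right)^{2s},
\]
which is the claimed bound; the constants depend on $s$ only. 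The main technical point is the lower bound on $|J_{1-s}(\eta_k)|^2$, which is where the restriction $s \neq \tfrac12$ enters implicitly (though the asymptotic argument works for all $s \in (0,1)$, the eigenpair formulas used to derive the quadrature differ at $s=\tfrac12$). The smallness condition on $\mathcal K$ is exactly what is needed to absorb the $O(\eta_k^{-1})$ correction to the sine factor and to replace $\eta_k$ by $k\pi$ in the series comparison.
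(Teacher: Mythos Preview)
Your proof is correct and follows essentially the same route as the paper's: drop the $\lambda\mathcal{Y}^2$ term, use $J_{1-s}^2(\eta_k)\sim 2/(\pi\eta_k)$ and $\eta_k\sim k\pi$, and reduce to a $p$-series tail $\sum_{k>\mathcal{K}}k^{-1-2s}\lesssim\mathcal{K}^{-2s}$. The only cosmetic differences are that the paper cites the handbook asymptotic for $J_{1-s}$ directly rather than deriving it via $J_{1-s}(\eta_k)=-J'_{-s}(\eta_k)$ and a phase argument, and it phrases the tail estimate through the Hurwitz zeta function $\zeta(1+2s,\mathcal{K}+1)$ instead of a direct comparison; your treatment of the lower bound on $|J_{1-s}(\eta_k)|$ is in fact more carefully justified.
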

\begin{proof}
The asymptotic \cite[Formula 9.2.1]{abramowitz1988handbook}
\begin{equation*}
 J_{-s}(z) \sim \sqrt{ \frac{2}{\pi z}} \, ,
\end{equation*}
which is valid for large arguments, allows us to estimate
\begin{align*}
 | Q_{s}^\infty (F_{\lambda}) - Q_{s}^{\mathcal{K}} (F_{\lambda}) | &\leq \frac{\pi}{\mathcal{Y}} \sum_{k > \mathcal{K}} \frac{2}{\pi \eta_k J_{1-s}(\eta_k)^2 }
   \left( \frac{\eta_k}{\mathcal{Y}} \right)^{1-2s} F_{\lambda} \left(\frac{\eta_k}{\mathcal{Y}} \right)  \\
   &= \frac{4\sin(\pi s)}{\mathcal{Y}}  \sum_{k > \mathcal{K}} \frac{1}{\pi \eta_k J_{1-s}(\eta_k)^2} \left( \frac{\eta_k}{\mathcal{Y}} \right)^{1-2s}
     \frac{1}{ \frac{\eta_k^2}{\mathcal{Y}^2} + \lambda } \\
   &\lesssim  \frac1{\mathcal{Y}} \sum_{k > \mathcal{K}} \left(\frac{\eta_k}{\mathcal{Y}}\right)^{1-2s} \frac{1}{\frac{\eta_k^2}{\mathcal{Y}^2}+\lambda }
   = \mathcal{Y}^{2s} \sum_{k > \mathcal{K}} \frac{\eta_k^{1-2s}}{\eta_k^2+\mathcal{Y}^2 \lambda} \, .
\end{align*}

McMahon’s asymptotic for large zeros \cite[Section 10.21 (vi)]{NIST:DLMF},
$
 \eta_k \sim k\pi \, ,
$
gives
\[
  \sum_{k>\mathcal{K}} \frac{\eta_k^{1-2s}}{\eta_k^2+\mathcal{Y}^2 \lambda} \lesssim \sum_{k>\mathcal{K}} \frac1{k^{1+2s}} = \zeta( 1+2s, \mathcal{K} +1 ) \, ,
\]
where by $\zeta$ we denote the Hurwitz Zeta function \cite[Section 25.11]{NIST:DLMF}. Using that, for large $\mathcal{K}$, we have the asymptotic \cite[Equation 25.11.43]{NIST:DLMF}
\[
  \zeta(1+2s, \mathcal{K}+1 ) \sim (\mathcal{K}+1 )^{-2s} < \mathcal{K}^{-2s} \, ,
\]
the claimed result follows.
\end{proof}

With \Cref{col:QuadBesselError} and \Cref{lemma:bessel-quadrature} at hand, we make a final corollary to \Cref{thm:convergence-rate-generic} and provide an error estimate for $s \neq \tfrac12$.

\begin{corollary}[error estimate for $s \neq \tfrac12$]
Let $\Omega \subset \mathbb{R}^d$ be a bounded, convex polytope and $f\in L^2(\Omega)$. Let $u \in \mathbb{H}^s(\Omega)$
be the solution to \eqref{eq:pde-problem} for $s \in (0,1) \setminus \{\tfrac12\}$.
Let $u_{h,\mathcal{Y}}^{\mathcal{K}} \in \mathbb{V}_h$ be given by \eqref{eq:formal-solution-sum}. Then, we have that
\begin{equation*}
 \| u - u_{h,\mathcal{Y}}^{\mathcal{K}} \|_{L^2(\Omega)} \lesssim \left( h^{2s} + \exp
   \left( -\frac{\mathcal{Y}}{\sqrt{C_P}} \right) + \left( \frac{\mathcal{Y}}{\mathcal{K}} \right)^{2s} \right)
 \| f \|_{L^2(\Omega)} \, ,
\end{equation*}
where $C_P$ is given by \eqref{eq:discrete-eigenvals-bounds}.
In particular, if $\mathcal{Y} \sim 2s | \log h|$ and $\mathcal{K} \sim \frac{\mathcal{Y}}{h}$, we have
\begin{equation*}
 \| u - u_{h,\mathcal{Y}}^{\mathcal{K}} \|_{L^2(\Omega)} \lesssim h^{2s} \| f \|_{L^2(\Omega)} \, .
\end{equation*}
\end{corollary}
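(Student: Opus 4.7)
The plan is to simply invoke \Cref{thm:convergence-rate-generic} and substitute the two supremum bounds that have already been established in the preceding lemmas. \Cref{thm:convergence-rate-generic} controls $\|u-u_{h,\mathcal{Y}}^{\mathcal{K}}\|_{L^2(\Omega)}$ by a sum of three contributions weighted by $\|f\|_{L^2(\Omega)}$: the discretization error $h^{2s}$, the quadrature error $\sup_{\lambda\geq 1/C_P}|I_s(F_\lambda)-Q_s^\infty(F_\lambda)|$, and the truncation error $\sup_{\lambda\geq 1/C_P}\bigl|\sum_{k>\mathcal{K}}(t_k^{(s)})^{1-2s}\omega_k^{(s)}F_\lambda(t_k^{(s)})\bigr|$. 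Because the present corollary treats the regime $s\neq \tfrac12$, the nodes and weights are the ones given by \eqref{eq:s-not-one-half-weights}, which is precisely the setting in which the two aforementioned suprema have been analyzed.

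The quadrature supremum is exactly the quantity bounded in \Cref{col:QuadBesselError}, giving $\exp(-\mathcal{Y}/\sqrt{C_P})$ uniformly in $\lambda\geq 1/C_P$. The truncation supremum is precisely the quantity estimated in \Cref{lemma:bessel-quadrature}, yielding $(\mathcal{Y}/\mathcal{K})^{2s}$ once $\mathcal{K}$ is sufficiently large. Plugging both bounds into \Cref{thm:convergence-rate-generic} produces the first displayed inequality of the corollary.

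For the practical estimate, I would choose the discretization parameters so as to equilibrate all three contributions at order $h^{2s}$. Taking $\mathcal{Y}$ to be a large enough multiple of $2s|\log h|$ (absorbing the $\sqrt{C_P}$ factor into the implicit constant hidden in $\sim$) yields $\exp(-\mathcal{Y}/\sqrt{C_P})\lesssim h^{2s}$. Then, picking $\mathcal{K}\sim \mathcal{Y}/h$ gives $(\mathcal{Y}/\mathcal{K})^{2s}\lesssim h^{2s}$. Summing these against the discretization term $h^{2s}$ produces the advertised $h^{2s}\|f\|_{L^2(\Omega)}$ bound.

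There is no real obstacle here: the analytical heavy lifting has already been done in \Cref{thm:convergence-rate-generic}, \Cref{col:QuadBesselError}, and \Cref{lemma:bessel-quadrature}, so the proof is essentially a one-line substitution followed by parameter selection. The only minor point of care is verifying that the prescribed scaling $\mathcal{K}\sim \mathcal{Y}/h$ eventually satisfies the ``sufficiently large $\mathcal{K}$'' hypothesis of the truncation lemma, which is immediate since $\mathcal{Y}/h\to\infty$ as $h\downarrow 0$.
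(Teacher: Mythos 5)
Your proposal is correct and follows exactly the same route as the paper: substitute the quadrature bound from \Cref{col:QuadBesselError} and the truncation bound from \Cref{lemma:bessel-quadrature} into \Cref{thm:convergence-rate-generic}, then equilibrate all three contributions by choosing $\mathcal{Y}$ and $\mathcal{K}$ as prescribed. Your added remarks about absorbing $\sqrt{C_P}$ into the implied constant of $\sim$ and about the ``$\mathcal{K}$ sufficiently large'' hypothesis are harmless clarifications of the same argument.
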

\begin{proof}
One merely needs to apply \Cref{col:QuadBesselError} and \Cref{lemma:bessel-quadrature} to each one of the terms in the conclusion of \Cref{thm:convergence-rate-generic}.

The practical error estimate, that is the one given solely in terms of $h$, is given by using the prescribed relations between all discretization parameters.
\end{proof}

\section{Numerical Results}
\label{sec:results}

Previous work demonstrated the convergence of the tensor FEM of the Caffarelli-Silvestre
extension problem without diagonalization \cite{nochetto2015pde} and with diagonalization
\cite{banjai2019tensor}. To validate the convergence rates for our exact diagonalization technique, similar numerical experiments are conducted.
The method is implemented using the \texttt{deal.ii} finite element library \cite{dealII95,dealii2019design}, which
uses of a wide array of scientific libraries including the Message Passing Interface (MPI) for
parallel computation. With MPI interface in \texttt{deal.ii}, the parallel performance of the
method is measured. Both strong scaling and weak scaling performances are measured. All meshes in $\Omega$
are uniform, and we use conforming $\mathbb{Q}_1$ elements. Following \Cref{sec:quadrature-error}, the truncation height
$\mathcal{Y}$ and number of eigenpairs $\mathcal K$ are chosen as
\begin{equation}
 \label{eq:recipe}
 \mathcal{Y} = 2s | \log (h) | \,\, , \qquad\qquad \mathcal{K} = \frac{2s |\log (h)|}{h} \, .
\end{equation}

\subsection{Convergence Studies}
\label{subsec:convergence}

The numerical solution is computed
to approximate the solution to two smooth solutions. For the first problem, we consider an $L$-shaped domain,
$\Omega_L \subset \mathbb{R}^2$, determined by the vertices
\[
  \mathbf{c} \in \{ (0,0) \, , (1,0) \, , (-1,1) \, , (-1,-1)\, , (0,-1) \, \} \, .
\]
The exact solution is
\begin{equation*}
  u(x_1, x_2) = \sin (\pi x_1) \sin (\pi x_2) + \sin( 3\pi x_1) \sin( 2 \pi x_2)
    + \sin(5\pi x_1) \sin( 4 \pi x_2 ) \, ,
\end{equation*}
and the right hand side is chosen accordingly. Notice that, since the domain is not convex, our theoretical developments do not apply as written. However, the only place where convexity is invoked is in the derivation of \eqref{eq:OperatorError}. One can easily develop a more general error analysis that takes into account a smaller regularity shift.

A $\mathbb{Q}_1$-FEM on uniformly refined meshes in $\Omega_L$ is used. For the parameters we follow \eqref{eq:recipe}. The results can be seen in \Cref{tab:convergence-l-shaped}. The
observed convergence rates, for $s=0.25$ and $s=0.75$, match the theoretical rate of $2s$.
The error is also shown in \Cref{fig:lshaped-convergence} for both $s=0.25$ and $s=0.75$ along with the corresponding theoretical convergence lines.

\begin{table}[tbhp]
\scriptsize
\captionsetup{position=top} 
\caption{Number of elements for the $\mathbb{Q}_1$ FEM on uniform meshes in $\Omega_L$, height $\mathcal{Y}$ of the truncation, $\mathcal{K}$ number of eigenpairs used, the resulting error in the $L^2(\Omega_L)$ norm, and the convergence rate for $s=0.25$ and $s=0.75$.}
\label{tab:convergence-l-shaped}
\begin{center}
  \begin{tabular}{|r|c|r|c|c|c|r|c|c|} \hline
    \multirow{2}{4em}{\# $\mathcal{T}_{\Omega_L}$} &  \multicolumn{4}{c}{$s=0.25$} &  \multicolumn{4}{|c|}{$s=0.75$} \\
     & $\mathcal{Y}$ &  $\mathcal{K}$ & Error & Rate & $\mathcal{Y}$ & $\mathcal{K}$ & Error & Rate \\ \hline
       65  &  0.69315  &     2  &  1.1847e+00  &  --    &  2.07944  &     8  &  8.62909e-01  &  --    \\
      225  &  1.03972  &     8  &  9.22409e-01  &  0.36  &  3.11916  &    24  &  3.54492e-01  &  1.28  \\
      833  &  1.38629  &    22  &  6.59965e-01  &  0.48  &  4.15888  &    66  &  1.15435e-01  &  1.62  \\
     3201  &  1.73287  &    55  &  4.68283e-01  &  0.50  &  5.19860  &   166  &  3.69013e-02  &  1.65  \\
    12545  &  2.07944  &   133  &  3.29807e-01  &  0.51  &  6.23832  &   399  &  1.19066e-02  &  1.63  \\
    49665  &  2.42602  &   310  &  2.33219e-01  &  0.50  &  7.27805  &   931  &  3.91190e-03  &  1.61  \\
   197633  &  2.77259  &   709  &  1.64819e-01  &  0.50  &  8.31777  &  2129  &  1.30633e-03  &  1.58  \\
   788481  &  3.11916  &  1597  &  1.16468e-01  &  0.50  &  9.35749  &  4791  &  4.42652e-04  &  1.56  \\
   3149825 &  3.46574  &  3548  &  8.23623e-02  &  0.50  &  10.3972  & 10646  &  1.51727e-04  &  1.54  \\
    \hline
  \end{tabular}
 \end{center}
\end{table}

\begin{figure}[tbhp]
 \centering
   \begin{tikzpicture}
    \begin{axis}[
        title={$L^2(\Omega_L)$ convergence},
       xlabel={Degrees of Freedom},
       ylabel={$L^2(\Omega_L)$ error},
       xmode=log,
       ymode=log,
       xmin=0, xmax=4000000,
       ymin=0.0001, ymax=2,
       legend pos=south west,
       ymajorgrids=true,
       grid style=dashed,
     ]
     \addplot[
     color=black,
     mark=none,
     ]
     coordinates {
      (65,2)(225,1.4142)(833,1)(3201,0.70711)(12545,0.5)(49665,0.35355)(197633,0.25)(788481,0.17678)(3149825,0.1250)
     };
     \legend{$\text{error }\propto h^{2s}=h^{0.5}$}

    \addplot[
      color=blue,
      mark=square,
    ]
    coordinates {
    (65,1.18473)(225,0.922409)(833,0.659965)(3201,0.468283)(12545,0.329807)(49665,0.233219)(197633,0.164819)(788481,0.116468)(3149825,0.0823623)
    };
    \addlegendentry{$s=1/4$}

    \addplot[
      color=red,
      mark=none,
    ]
    coordinates {
    (65,1)(225,0.35255)(833,0.125)(3201,0.0442)(12545,0.01563)(49665,0.0055)(197633,0.0020)(788481,0.0007)(3149825,0.000248)
    };
    \addlegendentry{$\text{error }\propto h^{2s}=h^{1.5}$}

    \addplot[
      color=teal,
      mark=square,
    ]
    coordinates {
    (65,0.862909)(225,0.354492)(833,0.115435)(3201,0.0369013)(12545,0.0119066)(49665,0.00391190)(197633,0.00130633)(788481,0.000442652)(3149825,0.00015173)
    };
    \addlegendentry{$s=3/4$}
 \end{axis}
 \end{tikzpicture}
 \caption{Error in the $L^2(\Omega_L)$ norm versus the number of degrees of freedom using $\mathbb{Q}_1$ finite elements for $s=1/4$ and $s=3/4$ on uniformly refined meshes of $\Omega_L$.}
 \label{fig:lshaped-convergence}
\end{figure}
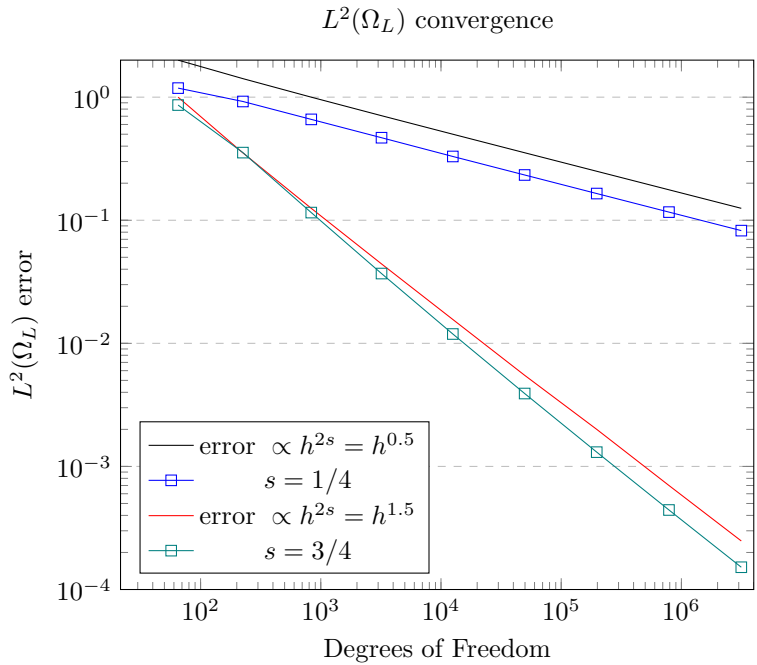

For the second problem, we consider a circular domain, i.e.,
\[
  \Omega_C = \{ x \in \mathbb{R}^2 : |x| < 1 \} \, .
\]
Using polar coordinates, it can be shown that
\begin{equation*}
 \phi_{m,n} (r , \theta) = J_m (j_{m,n} r) \big( A_{m,n} \cos (m\theta) + B_{m,n} \sin(m \theta) \big) \, ,
\end{equation*}
where $J_m$ is the Bessel function of the first kind with parameter $m$, $j_{m,n}$ is the $n$\textsuperscript{th} zero of $J_m$,
and $A_{m,n}$, $B_{m,n}$ are normalization constants.
It may also be shown that $\lambda_{m,n} = (j_{m,n})^2$.
We set $f=(\lambda_{1,1})^s \phi_{1,1}$. Then by Equation \eqref{eq:spectral-fractional-laplacian},
\begin{equation*}
 u(r, \theta) = \phi_{1,1} (r, \theta) \, .
\end{equation*}

As in the previous case, a $\mathbb{Q}_1$-FEM on uniformly refined meshes in $\Omega_C$ is used.
The formulas given in \eqref{eq:recipe} are used. The results are presented in
\Cref{tab:convergence-circular}. 
The observed convergence rates, for $s\in \{0.25, 0.75\}$, agree 
with the theoretical rate of $2s$.
\Cref{fig:circular-convergence} shows the error for both $s=0.25$ and $s=0.75$ next to the corresponding theoretical error convergence lines.

\begin{table}[tbhp]
\scriptsize
\captionsetup{position=top} 
\caption{Number of elements for the $\mathbb{Q}_1$ FEM on uniform meshes in $\Omega_C$, height $\mathcal{Y}$ of the truncation, $\mathcal{K}$ number of eigenpairs used, the resulting error in the $L^2(\Omega_C)$ norm, and the convergence rate for $s=0.25$ and $s=0.75$.}
\label{tab:convergence-circular}
\begin{center}
  \begin{tabular}{|r|c|r|c|c|c|r|c|c|} \hline
    \multirow{2}{4em}{\# $\mathcal{T}_{\Omega_C}$} &  \multicolumn{4}{c}{$s=0.25$} &  \multicolumn{4}{|c|}{$s=0.75$} \\
     & $\mathcal{Y}$ &  $\mathcal{K}$ & Error & Rate & $\mathcal{Y}$ & $\mathcal{K}$ & Error & Rate \\ \hline
     89  &  0.69315  &     2  &  3.95613e-01  &  -- &  2.07944  &     8  &  6.73748e-02  &  -- \\
     337  &  1.03972  &     8  &  2.49627e-01  &  0.66 &  3.11916  &    24  &  2.09143e-02  &  1.69 \\
     1313  &  1.38629  &    22  &  1.74086e-01  &  0.52  &  4.15888  &    66  &  6.38904e-03  &  1.71 \\
     5185  &  1.73287  &    55  &  1.23124e-01  &  0.50 &  5.19860  &   166  &  2.04193e-03  &  1.65 \\
     20609  &  2.07944  &   133  &  8.67400e-02  &  0.51    &  6.23832  &   399  &  6.70252e-04  &  1.61 \\
     82177  &  2.42602  &   310  &  6.13695e-02  &  0.50  &  7.27805  &   931  &  2.24480e-04  &  1.58 \\
     328193  &  2.77259  &   709  &  4.33828e-02  &  0.50 &  8.31777  &  2129  &  7.62235e-05  &  1.56 \\
     1311745  &  3.11916  &  1597  &  3.06600e-02  &  0.50 &  9.35749  &  4791  &  2.61759e-05  &  1.54 \\
    \hline
  \end{tabular}
 \end{center}
\end{table}

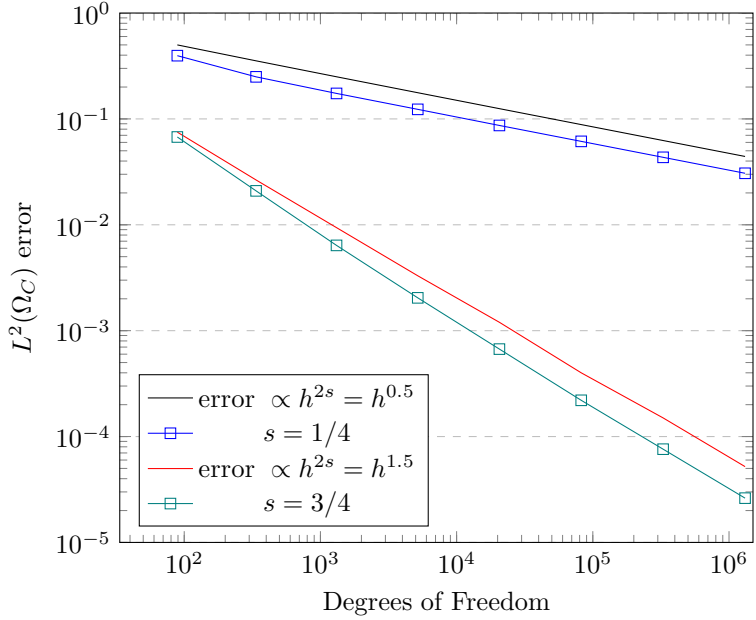
\begin{figure}[tbhp]
 \centering
   \begin{tikzpicture}
     \begin{axis}[
       xlabel={Degrees of Freedom},
       ylabel={$L^2(\Omega_C)$ error},
       xmode=log,
       ymode=log,
       xmin=0, xmax=1500000,
       ymin=0.00001, ymax=1,
       legend pos=south west,
       ymajorgrids=true,
       grid style=dashed,
     ]
     \addplot[
     color=black,
     mark=none,
     ]
     coordinates {
      (89,0.5)(337,0.3536)(1313,0.25)(5185,0.1768)(20609,0.125)(82177,0.0884)(328193,0.0625)(1311745,0.0442)
     };
     \legend{$\text{error }\propto h^{2s}=h^{0.5}$}

    \addplot[
      color=blue,
      mark=square,
    ]
    coordinates {
    (89,0.395613)(337,0.249627)(1313,0.174086)(5185,0.123124)(20609,0.08674)(82177,0.06137)(328193,0.04338)(1311745,0.03066)
    };
    \addlegendentry{$s=1/4$}

    \addplot[
      color=red,
      mark=none,
    ]
    coordinates {
    (89,0.075)(337,0.0265)(1313,0.0094)(5185,0.0033)(20609,0.0012)(82177,0.0004)(328193,0.00015)(1311745,0.000052)
    };
    \addlegendentry{$\text{error }\propto h^{2s}=h^{1.5}$}

    \addplot[
      color=teal,
      mark=square,
    ]
    coordinates {
    (89,0.06748)(337,0.02091)(1313,0.00639)(5185,0.002042)(20609,0.00067)(82177,0.00022)(328193,0.000076)(1311745,0.0000262)
    };
    \addlegendentry{$s=3/4$}

 \end{axis}
\end{tikzpicture}
\caption{Error in the $L^2(\Omega_C)$ norm versus the number of degrees of freedom using $\mathbb{Q}_1$ finite elements for $s=1/4$ and $s=3/4$ on uniformly refined meshes of $\Omega_C$.}
\label{fig:circular-convergence}
\end{figure}

\subsection{Parallel Performance}
\label{subsec:parallel-perf}

In \Cref{sec:diagonalization}, it is stated that the diagonalization technique exposes an inherent parallelization
opportunity for the numerical algorithm. We measure the parallel performance in two scaling tests. For both tests, we let
$\Omega_\square = (0,1)^2$. In this case, it is known that
\begin{equation*}
 \phi_{m,n}(x_1, x_2) = \sin(m \pi x_1) \sin(n \pi x_2) \,\, , \,\, \lambda_{m,n}=\pi^2 (m^2+n^2) \,\, , \,\, m,n \in \mathbb{N} \, .
\end{equation*}
Prescribing $f(x_1,x_2)=(2\pi^2)^s \sin(\pi x_1)\sin(\pi x_2)$, from \eqref{eq:spectral-fractional-laplacian} we have that
\begin{equation*}
 u(x_1,x_2) = \sin(\pi x_1) \sin(\pi x_2) \, .
\end{equation*}
A uniform mesh of $\mathbb{Q}_1$ elements is used on $\Omega_\square$ consisting of 66,049 degrees of freedom.
We set $\mathcal{Y} = 10$ and $s=0.25$. For both experiments and each case of number of ranks,
the problem was executed 5 times and the resulting runtimes averaged in order to minimize the potential effects of runtime variability.

The first test measures the strong scaling performance. We fix $\mathcal{K} = 32,000$. The number of MPI processes used to solve the problem is then increased from 1 up to 32. \Cref{fig:a} illustrates
the speedup of the algorithm compared with the ideal linear speedup curve. At 32 MPI processes, the algorithm maintains a speedup of nearly 26.

The weak scaling properties of our method are explored next. Here we scale the amount of
computational work linearly with the number of MPI processes and examine the efficiency of the algorithm, i.e., does the total wall time remain
constant for each parallel run?
We set $\mathcal K = 32,000N$, where $N$ is the number of MPI processes launched. The parallel efficiency can then be calculated as a ratio of the runtime for $N$ MPI processes to
the runtime for a single MPI process. \Cref{fig:b} shows the parallel efficiency curve.
An efficiency in excess of 100\% is maintained with $N=32$.

\begin{figure}[tbhp]
  \centering
  \subfloat[Strong scaling.]{\label{fig:a}
    \begin{tikzpicture}[scale=0.6, transform shape]
      \begin{axis}[
        xlabel={Number of MPI Processes},
        ylabel={Speedup},
        xmin=0, xmax=36,
        ymin=0, ymax=36,
        xtick={0,4,8,12,16,20,24,28,32},
        ytick={0,4,8,12,16,20,24,28,32},
        legend pos=north west,
        ymajorgrids=true,
        grid style=dashed,
      ]
      \addplot[
      color=black,
      mark=none,
      ]
      coordinates {
       (1,1)(2,2)(4,4)(8,8)(12,12)(16,16)(20,20)(24,24)(28,28)(32,32)
      };
      \legend{Ideal Speedup}

     \addplot[
       color=blue,
       mark=square,
     ]
     coordinates {
     (1,1)(2,2.22378484)(4,4.289640737)(8,8.137290679)(12,11.78984085)(16,15.38840064)(20,18.82800932)(24,21.86292239)(28,24.18209027)(32,25.85442726)
     };
     \addlegendentry{Real Speedup}
      \end{axis}
    \end{tikzpicture}
  }
\subfloat[Weak scaling.]{\label{fig:b}
\begin{tikzpicture}[scale=0.6, transform shape]
    \begin{axis}[
      xlabel={Number of MPI Processes},
      ylabel={Efficiency},
      xmin=0, xmax=36,
      ymin=0.92, ymax=1.175,
      xtick={0,4,8,12,16,20,24,28,32},
      ytick={0.925,0.95,0.975,1.00,1.025,1.05,1.075,1.10,1.125,1.15},
      legend pos=north west,
      ymajorgrids=true,
      grid style=dashed,
    ]

    \addplot[
      color=blue,
       mark=square,
     ]
     coordinates {
     (1,1)(2,1.140954176)(4,1.109088825)(8,1.099942595)(12,1.098741258)(16,1.094661557)(20,1.09290542)(24,1.081872548)(28,1.059873249)(32,1.022540024)
     };
     \legend{Parallel Efficiency}
   \end{axis}
  \end{tikzpicture}
}
\caption{Parallel performance for a simple test case.}
\label{fig:par-perf}
\end{figure}
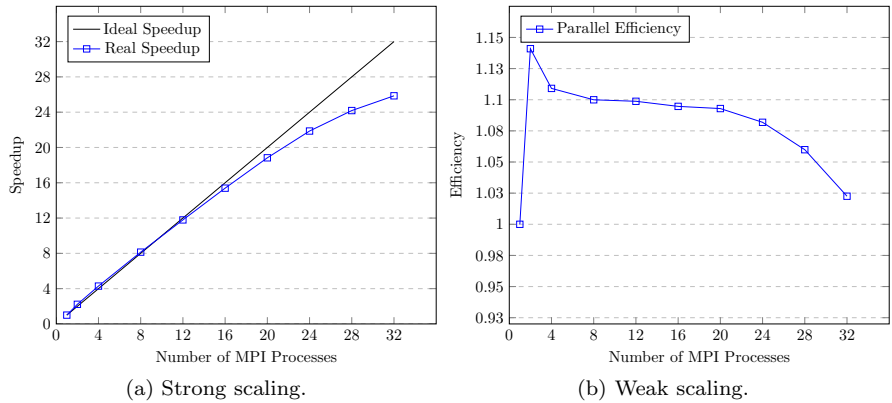

\section{Conclusion}
\label{sec:conclusion}

We introduced and analyzed an exact diagonalization technique for the solution
of the spectral fractional Laplacian using a FEM based on the Caffarelli-Silvestre extension 
\cite{caffarelli2007extension}. This technique builds on \cite{banjai2019tensor} which used \textit{hp}-FEM
to approximate the eigenpairs of the eigenvalue problem in the extended dimension. Error estimates measured in the
$L^2(\Omega)$ norm were derived. Numerical experiments validated the theoretical convergence rate. The parallel performance of the new technique was measured by two
scaling benchmarks, which demonstrate that the algorithm is able to take advantage of the
parallelizability inherent in the numerical scheme.

\appendix
\section{Computation Details and Reproducibility}
\label{sec:software-version}

All numerical experiments were conducted using a single node with dual sockets using
Intel Xeon Gold 6246R processors running at 3.40 GHz with a total of 628 GB of RAM.
The node was running Fedora release 36 configured with Linux kernel 6.2.13. All libraries and source
code were compiled using GCC version 12.2.1. The source code for \texttt{deal.ii} and its dependencies
were built using the \texttt{candi} system, available at
\texttt{\url{https://github.com/dealii/candi}}.
Versions for all libraries used are given in Table \ref{tab:libs}.

\begin{table}[tbhp]
\footnotesize
\caption{Library versions.}\label{tab:libs}
\begin{center}
\begin{tabular}{cc|cc}
OpenMPI   & 4.1.5    & NetCDF           & 4.7.4 \\
METIS     & 5.1.0    & Numdiff          & 5.9.0 \\
deal.ii   & 9.4.2-r3 & Oce-OCE          & 0.18.3 \\
adloc     & 2.7.3    & OpenBLAS         & 0.3.17 \\ \hline
ARPACK-NG & 3.8.0    & P4est            & 2.3.2 \\
assimp    & 4.1.0    & ParMETIS         & 4.0.3 \\
astyle	   & 2.04     & PETSc            & 3.16.4 \\
boost     & 1.63.0   & Scalapack        & 2.1.0 \\ \hline
bzip2     & 1.0.6    & SLEPc            & 3.16.2 \\
Cmake     & 3.20.5   & Sundials         & 5.7.0 \\
Ginkgo	   & 1.4.0    & SuperLU Dist     & 5.1.2 \\
Gmsh      & 4.8.4    & Symengine        & 0.8.1 \\ \hline
GSL       & 2.6      & Trilinos Release & 13.2.0 \\
HDF5      & 1.10.8   & Zlib             & 1.2.8 \\
Mumps     & 5.4.0.5  &                  & \\
\end{tabular}
\end{center}
\end{table}

The source code for the numerical algorithm is available at\\
\texttt{\url{https://github.com/shedsaw/Exact-Diagonalization-Tensor-FEM}}.

\section*{Acknowledgments}

Thanks to  Andrea Bonito (TAMU) for his help on the interpretation of the numerical scheme as
a quadrature formula and to Matthias Maier (TAMU) for lending us his expert knowledge of the \texttt{deal.ii} library.

This work is supported by NSF Grant No. 2111228.


\end{document}